\numberwithin{equation}{section}
\newtheorem{Thm}{Theorem}[section]
\newtheorem{Def}{Definition}[section]
\newtheorem{Lm}[Thm]{Lemma}
\newtheorem{Prop}[Thm]{Proposition}
\newtheorem{Rk}{Remark}[section]
\newtheorem{Cor}[Thm]{Corollary}
\def\Diff{\mathrm{Diff}}
\def\Z{\mathbb Z}
\def\T{\mathbb T}
\def\C{\mathbb C}
\def\N{\mathbb N}
\def\S{\mathbb S}
\def\R{\mathbb R}
\def\eps{\varepsilon}
\def\al{\alpha}
\def\brho{\boldsymbol{\rho}}
\def\dt{\delta}
\def\G{\Gamma}
\def\lb{\lambda}
\def\id{\mathrm{id}}
\def\cF{\mathcal F}
\def\cA{\mathcal A}
\def\cF{\mathcal F}
\def\cH{\mathcal H}
\def\cW{\mathcal W}
\title[Rigidity of ABC solvable group actions]{Rigidity of some abelian-by-cyclic solvable group actions on $\T^N$
}
\author{Amie Wilkinson, Jinxin Xue}
\date\today
\address{Department of mathematics, the University of Chicago, Chicago, IL, US, 60637}
\email{wilkinso@math.uchicago.edu}%
\address{Department of mathematics \& Yau Mathematical Sciences Center, Tsinghua University, Beijing, China, 100084}
\email{jxue@mail.tsinghua.edu.cn}%
\begin{document}

\maketitle 
\begin{abstract}
In this paper, we study a natural class of  groups that act as affine transformations of  $\T^N$.  We investigate whether these solvable, ``abelian-by-cyclic,"  groups can act smoothly {\em and nonaffinely} on $\T^N$ while remaining homotopic to the affine actions.   In the affine actions, elliptic and hyperbolic dynamics coexist, forcing {\em a priori} complicated dynamics in nonaffine perturbations.   We first show, using the KAM method, that any small and sufficiently smooth perturbation of such an affine action can be conjugated smoothly to an affine action,  provided certain Diophantine conditions on the action are met. In dimension two,  under natural dynamical hypotheses, we get a complete classification of such actions; namely, any such group action by $C^r$ diffeomorphims can be conjugated to the affine action by $C^{r-\eps}$ conjugacy. Next, we show that in any dimension, $C^1$ small perturbations can be conjugated to an affine action via $C^{1+\eps}$ conjugacy. The method is a generalization of the Herman theory for circle diffeomorphisms to higher dimensions in the presence of a foliation structure provided by the hyperbolic dynamics. 
\end{abstract}

\begin{spacing}{0.3}
\tableofcontents
\end{spacing}
\renewcommand\contentsname{Index}

\section{Introduction}\label{SIntro}

This paper is motivated by an attempt to understand the action on the $2$-torus $\T^2 := \R^2/\Z^2$ generated by the diffeomorphisms $$g_0(x,y)=(2x+y,x+y),\quad g_1(x,y)=(x+\rho,y),\quad g_2(x,y)=(x,y+\rho),\quad \rho\in \R.$$
The map $g_0$ is a hyperbolic linear automorphism, and $g_1,g_2$ are translations. They satisfy the group relations $$g_0g_1=g_1^2g_2g_0,\quad g_0g_2=g_1g_2g_0,\quad g_1g_2=g_2g_1,$$ and no other relations if $\rho$ is irrational. Broadly stated, our aim is classify all diffeomorphisms $g_0,g_1,g_2$ satisfying these relations and no other. 

To place the problem in a more general context, in this paper, we establish  rigidity properties of certain solvable group actions on the torus $\T^N =\R^N/\Z^N$, for $N > 1$. 
The solvable groups $\Gamma$ considered here are the finitely presented, torsion-free, {\em abelian-by-cyclic} (ABC) groups, which admit a short exact sequence
\[0 \hookrightarrow \Z^d  \rightarrow \Gamma \rightarrow \Z \rightarrow 0.
\]
All such groups are of the form $\G = \G_{B}$, where  $B = (b_{ij}) $  is an integer valued, $d\times d$ matrix with $\det(  B)\neq 0$, and 
\begin{equation}
\G_{  B}=  \Z \ltimes \Z^d = \left\langle g_0,g_1,\ldots,g_d\ |\ g_0g_i=\left(\prod_{j=1}^d g_j^{b_{ji}}\right)g_0,\quad [g_i,g_j]=1,\quad i,j=1,2,\ldots,d\right\rangle.
\end{equation}
ABC groups have been studied intensively in geometric group theory, as they present the first case in the  open problem to classify finitely generated solvable groups up to quasi-isometry. The classification problem for ABC  groups has been solved in \cite{FM1} (the non-polycyclic case, $|\det   B| > 1$) and \cite{EFW1,EFW2} (the polycyclic case, $|\det   B| = 1$), where the authors also revealed  close connections between the geometry of these groups and dynamics \cite{FM2,EF}.  Here we consider actions of polycyclic ABC groups.  

A $C^r$ action $\al$ of a finitely generated group $\G$ with
generators $g_1,\ldots,g_k$ on a closed manifold $M$ is a homomorphism $\al\colon\G\to \mathrm{Diff}^r(M)$, where $\mathrm{Diff}^r(M)$ denotes the group of orientation-preserving, $C^k$ diffeomorphisms of $M$. The action is determined completely by $\al(g_1),\ldots,\al(g_k)$. 
The polycylic ABC groups admit natural affine actions on tori, as follows.

Up to rearranging the standard basis for $\R^d$, every matrix $  B\in \mathrm{ SL}(d; \Z)$ can be written in the form
\[   B = \begin{pmatrix}\bar{  B} & 0\\
0 & I_{d-N}
\end{pmatrix},
\]
for some $N\leq d$,  where $\bar {  B}=(\bar b_{ij})\in  \mathrm{ SL}(N; \Z)$, and  $I_{d-N}$ is the $(d-N)\times (d-N)$ identity matrix, chosen to be maximal.  
To avoid obviously degenerate actions, we restrict our attention to the cases where $d = KN+1$, for some $K\geq 1$.  Then 
\begin{equation}\label{EqGrpRelation}
\begin{aligned}
\G_{  B} = \G_{\bar {  B},K}&:=\langle g_0,g_{i,k},\ i=1,\ldots,N,\ k=1,\ldots,K\ |\ \ [g_{i,k},g_{j,\ell}]=1,\\
& \left.g_0g_{i,k}=\left(\prod_{j=1}^N g_{j,k}^{\bar b_{ji}}\right)g_0,\quad i,j=1,\ldots,N,\ k,\ell=1,\ldots,K\right\rangle.
\end{aligned}
\end{equation}
Note that $\G_{\bar {  B}} = \G_{\bar {  B}, 1}$.

In the affine actions of $\G_{\bar {  B},K}$ we consider, the element $g_0$ acts on $\T^N$ by the
automorphism $x\mapsto \bar Ax\,\, (\mathrm{mod }\, \,\Z^N)$  induced by $\bar A\in \mathrm{SL}(N,\Z)$, and the elements $g_{i,k},  i=1,\ldots,N,\ k=1,\ldots,K$ act as translations
$x\mapsto x + \rho_{i,j}\,\,  (\mathrm{mod } \,\Z^N)$, where $\rho_{i,j}\in \R^N$.   Thus if we denote this action by $\al=\al_{\bar B, K} \colon\G_{\bar B,K} \to \mathrm{Diff}^r(\T^N) $, we have
\[\al (g_0)(x) = \bar A x,\quad\hbox{and } \al (g_{i,k})(x) =  x + \rho_{i,j}.
\]
The group relations in $\G_{\bar B, K}$ restrict the possible values of $\rho_{i,j}$; we describe precisely these restrictions in the next subsection.  We will see that for a typical $\bar A$, the affine actions define a finite dimensional space of distinct (i.e. nonconjugate) actions on the torus.  

Given such a group $\G_{\bar B, K}$ with the associated affine action $\bar\alpha $ we investigate whether there exist other actions $\al \colon\G_{\bar B,K} \to \Diff^r(\T^N)$ that are homotopic to  $\bar\alpha $ but not conjugate to  $\bar\alpha $ in the group $\mathrm{Diff}^r(\T^N)$.  
If there are no such actions, or if such actions are proscribed in some manner, then the group is colloquially said to be {\em rigid} (a much more precise definition is given below).

The main rigidity results of this paper can be grouped into two classes: local and global.  Loosely speaking, local rigidity results concern those $C^r$ actions that are $C^r$ perturbations of the affine action, and global results concern actions where $C^r$ closeness to the affine action is not assumed (although other restrictions might be present).

We obtain local rigidity results for the actions of $\G_{\bar B} = \G_{\bar B, 1}$, which imply similar results for $\G_{\bar B} = \G_{\bar B, K},\ K\geq 1$. To each action $\alpha$ on $\T^N$ sufficiently $C^r$ close to an affine action for some large $r$, we define an $N\times N$ {\em rotation matrix} $\brho(\alpha)$.   Under suitable hypotheses on $\brho(\alpha)$, if the columns of this matrix satisfy a simultaneous Diophantine condition, then  $\alpha$ is smoothly conjugate to the affine action with rotation matrix $\brho(\alpha)$.  The fact that the action $\alpha$ is a smooth perturbation of an affine action is crucial.

More generally, for each affine action $\bar\alpha$ of  $\G_{\bar B, K}$ there are $K$ rotation matrices  $\brho_1(\bar\alpha), \ldots,\brho_K(\bar \alpha)$.
In the section on global rigidity, we consider actions $\alpha$  of $\G_{\bar B,K}$ for which $\bar B$ acts as an Anosov diffeomorphism, but the rotation matrices $\brho_i(\alpha)$ are not {\em a priori} well-defined.  Under relatively weak additional assumptions on the action, we obtain that the collection of $\brho_i(\alpha)$ can be defined and and forms a complete invariant of the action, up to {\em  topological} conjugacy.  
We then establish conditions under which this topological conjugacy is smooth.
In particular, if $K$ is sufficiently large (depending on the spectrum of $\bar A$ and the Anosov element $\alpha(g_0)$), then for almost every 
set of rotation matrices $\brho_1(\alpha), \ldots,\brho_K(\alpha)$, the conjugacy is smooth.  

Before stating these results, we describe precisely the space of affine actions of $\G_{\bar B, K}$ we consider.


\subsection{The affine actions of $\Gamma_{\bar B, K}$}\label{SSAffine}
The following proposition can be verified directly using the group relation \eqref{EqGrpRelation}. 
\begin{Prop}\label{p=action}
Let $\bar A,\bar B\in  \mathrm{SL}(N,\Z)$, and suppose that  $\brho_1, \brho_2,\ldots, \brho_K $ are  real-valued, $N\times N$ matrices  such that each $\brho = \brho_i$ satisfies:
\begin{equation}\label{EqCommute}
\bar A\brho = \brho \bar B \mathrm{\ mod\ }\ \Z^{N\times N}.
\end{equation}
Denote by $\rho_{i,j}$ the $j$-th column of $\brho_i$.  Then the affine maps
$$\bar\al(g_0)(x) := \bar Ax \,\,(\mathrm{mod } \,\Z^N),\quad \mathrm{and}\quad \bar\al(g_{i,j})(x)  := x +\rho_{i,j} \,\,(\mathrm{mod } \,\Z^N)$$
define an action
 $\bar\al  = \bar\al_K(\bar A, \brho) \colon  \G_{\bar B, K}\to \mathrm{SL}(N,\Z)\ltimes\R^N $ on $\T^N$.

Conversely, if $\alpha\colon  \G_{\bar B,K}\to \mathrm{SL}(N,\Z)\ltimes\R^N $ is an action  on $\T^N$
with $$\alpha(g_0)(x) = \bar Ax \,\,(\mathrm{mod } \,\Z^N),\quad\mathrm{and}\quad\alpha(g_i)(x) =  x + \beta_{i,j} \,\,(\mathrm{mod } \,\Z^N),$$ for some vectors $\beta_{i,j}\in \R^N$, then
for each $i = 1,\ldots, K$, the matrix $\brho_i$ whose columns are formed by the $\beta_{i,j}$ satisfies \eqref{EqCommute}.
\end{Prop}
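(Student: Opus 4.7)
The plan is to verify by direct computation that the group relations defining $\Gamma_{\bar B,K}$ are respected by the proposed affine assignment, and to reverse this computation for the converse statement. The argument is purely algebraic---no analytic or geometric difficulty arises---so the main task is careful bookkeeping of how $\bar A$, $\bar B$, and the translation vectors interact through matrix multiplication on the correct side.

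For the forward direction, I would first dispose of the abelian relations $[g_{i,k},g_{j,\ell}]=1$, which hold automatically since any two translations of $\T^N$ commute. The substantive relations are $g_0 g_{i,k}=\bigl(\prod_{j=1}^N g_{j,k}^{\bar b_{ji}}\bigr)g_0$. Applying $\bar\alpha$ to both sides and evaluating at an arbitrary $x\in\T^N$, the left-hand side yields $\bar A x+\bar A\,\rho_{k,i}\pmod{\Z^N}$, where $\rho_{k,i}$ denotes the $i$-th column of $\brho_k$. The right-hand side first applies $g_0$ to obtain $\bar A x$ and then translates by $\sum_{j=1}^N \bar b_{ji}\rho_{k,j}$. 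The key observation is that this sum is precisely the $i$-th column of the matrix product $\brho_k\bar B$. Hence both sides agree on $\T^N$ if and only if the $i$-th column of $\bar A\brho_k$ coincides with the $i$-th column of $\brho_k\bar B$ modulo $\Z^N$ for every $i$, i.e., if and only if $\bar A\brho_k\equiv \brho_k\bar B\pmod{\Z^{N\times N}}$, which is exactly \eqref{EqCommute}.

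For the converse I would simply run the preceding computation in reverse. Any affine homomorphism $\alpha$ of the prescribed form must respect the twisted relation, which forces $\bar A\beta_{i,k}\equiv \sum_j \bar b_{ji}\beta_{j,k}\pmod{\Z^N}$ for all $i$ and $k$; assembling, for each fixed $k$, the vectors $\beta_{1,k},\ldots,\beta_{N,k}$ as the columns of $\brho_k$ converts this identity into \eqref{EqCommute}.

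The only point where I would slow down is the index convention, namely verifying that the presentation forces $\bar B$ to act on the right of $\brho_k$ rather than on the left. This is dictated by the fact that the exponent $\bar b_{ji}$ in the group relation pairs the summation index $j$ with the row index of $\bar B$ and the fixed index $i$ with its column index; transposing these roles would swap the sides of the multiplication in \eqref{EqCommute}. With this convention in hand, the whole verification occupies only a few lines.
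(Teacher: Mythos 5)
Your proposal is correct and is exactly the direct verification the paper has in mind (the paper itself only remarks that the proposition "can be verified directly using the group relation" and gives no further detail). Your computation that $\sum_{j}\bar b_{ji}\rho_{k,j}$ is the $i$-th column of $\brho_k\bar B$ while $\bar A\rho_{k,i}$ is the $i$-th column of $\bar A\brho_k$ is the whole content, and your care with the side on which $\bar B$ multiplies is the right point to be careful about.
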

We further focus on the case $K=1$.  
For $\bar A,\bar B\in  \mathrm{SL}(N,\Z)$  and 
$\brho\in M_N(\T)$, where $M_N(\T)$ denotes $N\times N$ matrices with entries in $\T$, we denote by $\bar\alpha(\bar A, \brho)$ the action $\bar\alpha$ on $\T^N$ defined in Proposition~\ref{p=action}.
Let
$$\mathrm{Aff}(\G_{\bar B},\bar A):=\{\bar\alpha(\bar A,\brho): \brho \mathrm{\ satisfies\ }\eqref{EqCommute}\}.$$
\begin{Prop}\label{PropFaithful}
The action $\al(\bar A, \brho)\in \mathrm{Aff}(\G_{\bar B},\bar A)$ is faithful if and only if  $\bar A$ is not of finite order,  and 
 the column vectors $\rho_1,\ldots,\rho_N$ of $\brho$ are linearly independent over $\Z$; that is, if there exists $(p_1,\ldots,p_N)\in \Z^N$ with $\sum_{i=1}^N p_i\rho_i=0 \mod \Z^N$, then $p_1=\cdots=p_N=0$.
\end{Prop}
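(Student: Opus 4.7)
The plan is to exploit the semidirect product structure $\G_{\bar B} \cong \Z \ltimes_{\bar B} \Z^N$ implicit in the presentation and then read off the kernel of $\alpha$ directly.

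\textbf{Step 1 (Normal form).} The relations $[g_i,g_j]=1$ together with $g_0 g_i g_0^{-1}=\prod_j g_j^{\bar b_{ji}}$ say exactly that conjugation by $g_0$ acts on the free abelian group $\langle g_1,\ldots,g_N\rangle\cong\Z^N$ via the matrix $\bar B\in\mathrm{SL}(N,\Z)$. Hence $\G_{\bar B}$ is the semidirect product $\Z\ltimes_{\bar B}\Z^N$, and every element can be written uniquely as $g_0^k\,v$ with $k\in\Z$ and $v=(m_1,\ldots,m_N)\in\Z^N$, where I identify $v$ with $\prod_i g_i^{m_i}$. In particular $g_0^k v$ is trivial in $\G_{\bar B}$ iff $(k,v)=(0,0)$.

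\textbf{Step 2 (Action in normal form).} Since the $g_i$ commute and act by translation, $\alpha(v)(x)=x+\brho v$, where $\brho v:=\sum_{i=1}^N m_i\rho_i$. Combined with $\alpha(g_0)(x)=\bar A x$, this yields
\[
\alpha(g_0^k v)(x)=\bar A^k x + \bar A^k\brho v \pmod{\Z^N}.
\]

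\textbf{Step 3 (Kernel equations).} The element $g_0^k v$ lies in $\ker\alpha$ iff the right-hand side equals $x$ modulo $\Z^N$ for every $x\in\T^N$. Since $\bar A^k-I\in M_N(\Z)$, the condition $(\bar A^k-I)x\in\Z^N$ for \emph{all} $x\in\T^N$ forces $\bar A^k=I$ (take $x$ in a small neighborhood of $0$). Substituting $\bar A^k=I$ reduces the remaining translation condition to $\brho v\in\Z^N$, i.e.\ $\sum_i m_i\rho_i\equiv 0\pmod{\Z^N}$. Note that these two conditions on $k$ and on $v$ decouple.

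\textbf{Step 4 (Conclusion).} By Steps 1 and 3, $\ker\alpha$ is trivial iff the only $k\in\Z$ with $\bar A^k=I$ is $k=0$, \emph{and} the only $v\in\Z^N$ with $\sum_i m_i\rho_i\equiv 0\pmod{\Z^N}$ is $v=0$. The first is the statement that $\bar A$ is not of finite order; the second is the $\Z$-linear independence of $\rho_1,\ldots,\rho_N$ modulo $\Z^N$ as stated. Both necessity and sufficiency follow simultaneously.

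There is no substantive obstacle here: once the normal form from the semidirect product structure is in hand, everything reduces to a direct computation of $\alpha(g_0^k v)$ and a separation of variables in the kernel equation. The only point worth flagging carefully is the argument in Step 3 that $(\bar A^k-I)$ must vanish as an integer matrix (not merely modulo $\Z^N$ on some dense set), which follows from linearity and local triviality near $0$.
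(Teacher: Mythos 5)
Your proof is correct and follows essentially the same route as the paper's: both reduce to the normal form $g_0^k v$ coming from the semidirect product structure and then separate the linear part from the translation part of the kernel equation. If anything, your Step 3 (forcing $\bar A^k = I$ by linearity near $0$ and then reading off $\brho v \in \Z^N$) is more robust than the paper's version, which evaluates at a rational point and tacitly assumes that $\brho p \not\equiv 0 \pmod{\Z^N}$ makes $x + \brho p$ irrational, a step that would need the same decoupling you make explicit.
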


For $\bar A \in  \mathrm{SL}(N,\Z)$ not of finite order, we thus define the set of faithful affine actions by
$$\mathrm{Aff}_\star(\G_{\bar B},\bar A):=\{\bar\alpha(\bar A,\brho)\in \mathrm{Aff}(\G_{\bar B},\bar A) :  \bar\alpha(\bar A,\brho) \mathrm{\ is\ faithful}\}.$$ 
\begin{Prop} \label{PropUniqueAffine}
Given $\bar A\in \mathrm{SL}(N,\Z)$, two actions  $\bar\alpha_1, \bar\alpha_2 \in \mathrm{Aff}_\star(\G_{\bar B},\bar A)$ are conjugate by a homeomorphism homotopic to identity if and only if $\bar\alpha_1=\bar\alpha_2.$ 
\end{Prop}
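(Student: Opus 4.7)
\emph{Proof plan.} The direction ``$\bar\alpha_1 = \bar\alpha_2 \Rightarrow$ conjugate'' is immediate, via conjugation by the identity. For the converse, suppose $h \colon \T^N \to \T^N$ is a homeomorphism homotopic to the identity with $h \circ \bar\alpha_1(g) = \bar\alpha_2(g) \circ h$ for every $g \in \G_{\bar B}$. Because both actions lie in $\mathrm{Aff}_\star(\G_{\bar B},\bar A)$, they already agree on $g_0$, so the task reduces to showing that the rotation matrices satisfy $\brho_1 = \brho_2$.

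The strategy is a standard lifting argument combined with the observation that a continuous $\Z^N$-periodic map is bounded. Lift $h$ to $H \colon \R^N \to \R^N$ with $H(x+n) = H(x)+n$ for all $n \in \Z^N$; such a lift exists because $h$ is homotopic to the identity. Then $U(x) := H(x) - x$ is $\Z^N$-periodic, hence descends to a continuous map $\T^N \to \R^N$, which is bounded by compactness of $\T^N$.

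Fix $j \in \{1,\dots,N\}$, and pick arbitrary lifts $\tilde\rho_j^{(i)} \in \R^N$ of the $j$-th column $\rho_j^{(i)} \in \T^N$ of $\brho_i$. The relation $h \circ \bar\alpha_1(g_j) = \bar\alpha_2(g_j) \circ h$ lifts to an identity
\[
H\bigl(x + \tilde\rho_j^{(1)}\bigr) = H(x) + \tilde\rho_j^{(2)} + n_j(x),
\]
where $n_j(x) \in \Z^N$; continuity of $H$ together with discreteness of $\Z^N$ forces $n_j(x)$ to be constant, and this constant can be absorbed into the choice of $\tilde\rho_j^{(2)}$. Subtracting $x + \tilde\rho_j^{(1)}$ from both sides then gives
\[
U\bigl(x + \tilde\rho_j^{(1)}\bigr) = U(x) + \Delta_j, \qquad \Delta_j := \tilde\rho_j^{(2)} - \tilde\rho_j^{(1)} \in \R^N.
\]
Iterating yields $U(x + k\tilde\rho_j^{(1)}) = U(x) + k\Delta_j$ for every $k \in \Z$. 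The left-hand side is uniformly bounded in $k$ and $x$, while $|k\Delta_j| \to \infty$ unless $\Delta_j = 0$. Hence $\Delta_j = 0$, so $\rho_j^{(1)} = \rho_j^{(2)}$ in $\T^N$. Letting $j$ vary gives $\brho_1 = \brho_2$ and thus $\bar\alpha_1 = \bar\alpha_2$.

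No step of this plan is a serious obstacle; the only care required is the bookkeeping of lifts and of the integer adjustment $n_j$. It is worth noting that the conjugacy relation at $g_0$, which would read $H \circ \bar A = \bar A \circ H + m_0$ for some $m_0 \in \Z^N$, is never invoked, consistent with the fact that $\bar\alpha_1$ and $\bar\alpha_2$ coincide on $g_0$ by assumption. Likewise, the faithfulness hypothesis is contextual (via Proposition~\ref{PropFaithful}, ensuring that affine actions are uniquely parameterized by their rotation data) rather than used in the argument itself.
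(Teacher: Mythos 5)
Your proposal is correct and follows essentially the same route as the paper: both decompose $h$ as the identity plus a $\Z^N$-periodic map and derive the relation $\tilde h(x+\rho_j)-\tilde h(x)=\rho_j'-\rho_j$ from the conjugacy on the translation generators. The only (cosmetic) difference is the last step — the paper integrates this identity over $\T^N$ using translation-invariance of Lebesgue measure, whereas you iterate it and invoke boundedness of the periodic part; both immediately force $\rho_j=\rho_j'$.
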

Thus actions in $\mathrm{Aff}_\star(\G_{\bar B},\bar A)$ may not be locally rigid even among affine actions. 
Returning to our original example,  let
\begin{equation}\label{EqEg}
\bar A= \left(
\begin{array}{cc}
2& 1\\
 1& 1 
\end{array}
\right),\quad \bar B=\bar A
\end{equation}
and
$$\G_{\bar A} = \langle g_0, g_1, g_2\ |\  g_0g_1g_0^{-1} = g_1^2g_2,\quad g_0g_2g_0^{-1} = g_1g_2,\quad [g_1,g_2] =1\rangle.$$
Fix $a_0, a_1 \in \T^1$,
and let
 $\brho(a_0, a_1):= \begin{pmatrix} a_0 + 2 a_1 & a_1 \\  a_1 & a_0 + a_1\end{pmatrix}$.
Then  \[\{\bar\alpha(\bar A, \brho(a_0,a_1)): a_0,a_1\in \T^1\}\] defines a $2$-parameter family of non-conjugate actions on $\T^2$; in the next subsection we explain that these are all such affine actions.

Thus the matrix $\brho$ is a complete invariant of the faithful affine representations $\bar\al(\bar A,\brho)$ of $\G_{\bar B}$.
The columns of $\brho$ are rotation vectors of the corresponding translations.  We will show that these rotation vectors, and hence the invariant $\brho$, extend continuously to a neighborhood of the affine representations in such a way that 
$\brho$ gives a complete invariant under smooth conjugacy, 
under the hypotheses that the columns of $\brho$ satisfy a simultaneous Diophantine condition.

Further properties of the affine representations are discussed in Appendix \ref{SAffine}, which also contains the proofs of the results in this section.

\subsection{Local rigidity of $\G_{\bar B,K}$ actions} 

An action $\al\colon  \G\to \mathrm{Diff}^r(M)$
is  {\em $C^{r,k,\ell}$ locally rigid} if any sufficiently $C^k$ small $C^r$ perturbation $\tilde\al$ is $C^\ell$ conjugate to $\al$, i.e.,
there exists a diffeomorphism $h$ of $M$,  $C^\ell$ close to the identity, that conjugates $\tilde\al$ to $\al$: $h\circ\al(g) =
\al(g)\circ h$ for all $g\in \G$.   The paper of Fisher \cite{Fi} contains  background and an excellent overview of the local
rigidity problem for general group actions.

Local rigidity results for solvable group actions are relatively rare.   In \cite{DK},
Damjanovic and Katok proved $C^{\infty,1,\infty}$ local rigidity for $\Z^k\ (k\geq  2)$ (abelian) higher rank partially
hyperbolic actions by toral automorphisms, by introducing a new KAM iterative scheme.  In
\cite{HSW} and \cite{W}, the authors proved local rigidity for higher rank ergodic nilpotent actions
by toral automorphisms on $\T^N$, for any even $N \geq  6$. Burslem and Wilkinson in \cite{BW} studied
the solvable Baumslag-Solitar groups 
\[BS(1, n) := \langle a, b\ |\ aba^{-1} = b^n; n \geq  2\rangle)\]
acting on $\T^1$
and obtained a classification of such actions and a global rigidity result  in the analytic setting.
Asaoka in \cite{A1, A2} studied the local rigidity of the action on $\T^N$ or $\mathbb S^N$ of non-polycyclic
abelian-by-cyclic groups, where the cyclic factor is uniformly expanding.

Unless assumptions are made on the action (or the manifold), solvable group actions are typically not locally rigid but can enjoy a form of partial local rigidity: that is, local rigidity subject to constraints that certain invariants be preserved.
The simplest example occurs in dimension $1$, where the rotation number of a single $C^2$ circle diffeomorphism supplies a complete topological invariant, provided that it is irrational, and  a complete smooth invariant, provided it satisfies a Diophantine condition.  This result extends to actions of higher rank abelian groups on $\T^1$, under a simultaneous Diophantine assumption on the rotation numbers of the generators of the action \cite{M}.  In fact, these results are not just local in nature but apply to all diffeomorphisms of the circle \cite{FK}.

For higher dimensional tori, even local rigidity results of this type are scarce, one problem being the lack of
invariants analogous to the rotation number.  One result in this direction is by
Damjanovic and Fayad \cite{DF},  who proved local rigidity of ergodic affine $\Z^k$ actions on the
torus that have a rank-one factor in their linear part, under certain Diophantine conditions. 


\begin{Def} \label{DefDiop} A collection of vectors $v_1,\ldots, v_m\in \R^N$ is \emph{simultaneously Diophantine},  if there exist $\tau > 0$ and
$C > 0$ such that
\begin{equation}\label{e=dioph} \max_{1\leq i\leq m} |\langle v_i, n\rangle| \geq
\frac{C}{
\|n\|^{\tau}},\quad \ \forall \ n \in \Z^N\setminus\{0\}.
\end{equation}
We denote by $\mathrm{SDC}(C,\tau)$ the set of $(v_1,\ldots,v_m)$ satisfying \eqref{e=dioph}.

\end{Def}
For example, the matrix $\rho \,\mathrm{Id}_N$ is simultaneously Diophantine if $\rho$ is a Diophantine number. It is known that for any for fixed $\tau>N-1$, the simultaneous Diophantine vectors  
\[\bigcup_{C>0} \mathrm{SDC}(C,\tau)\] form a full Lebesgue measure subset of $\T^{N\times m}$ (\cite{P}).  
 
\begin{Def}
Given a homeomorphism $f: \T^N\to \T^N$ homotopic to the identity and preserving a probability measure $\mu$, the vector \begin{equation}\label{EqRot}
\rho_\mu(f) :=\int_{\T^N} (\tilde f(x)- x)\, d\mu,\ \mathrm{mod}\ \Z^N,
\end{equation}
where $\tilde f\colon  \R^N\to \R^N$ is any lift of $f$, is independent of the choice of lift $\tilde f$. We call $\rho_\mu(f)$ the \emph{rotation vector} of $f$ with respect to $\mu$.
\end{Def}

Our main local rigidity result is: 
\begin{Thm}\label{ThmLocal} For any $\bar A,\bar B \in \mathrm{SL}(N,\Z)$ and any $C,\tau>0$, there exist $\eps>0$ and $\ell\in \N$ such that for any $\brho\in\mathrm{SDC}(C,\tau)$ satisfying \eqref{EqCommute} the following holds. Let  $\al: \Gamma_{\bar B}\to \mathrm{Diff}^\infty(\T^N)$ be any representation satisfying
\begin{enumerate}
\item $\al(g_0)$ is homotopic to $\bar\alpha(\bar A,\boldsymbol \rho)(g_0)=\bar A$;
\item  $\max_{1\leq i\leq N}\|\al(g_i)-\bar\al(\bar A,\boldsymbol \rho)(g_i)\|_{C^\ell}<\eps$;
\item there exist $\al(g_i)$-invariant probability measures $\mu_i$,\ $i=1,\ldots,N$, such
that the matrix formed by the rotation vectors
$(\rho_{\mu_1}(\al(g_1)), \ldots,\rho_{\mu_N}(\al(g_N)))$ is equal to $\brho$.
\end{enumerate}
Then there exists a $C^\infty$ diffeomorphism $h$ that is $C^1$ close to identity such that $h\circ\al = \bar\al\circ h$. Moreover,  the measure $\mu= h^{-1}_*\mathrm{Leb}$, where $\mathrm{Leb}$ is Haar measure on $\T^N$, is the unique $\al$-invariant measure and thus satisfies $\rho_\mu(T_i)=\rho_{\mu_i}(T_i) = \rho_i,\ i=1,\ldots,N.$
\end{Thm}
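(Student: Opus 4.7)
The plan is to run a KAM-type Nash--Moser iteration driven only by the translation generators $\al(g_1),\ldots,\al(g_N)$, which are the elements hypothesized to be $C^\ell$-close to affine. The cohomological equations at each step are solved using the simultaneous Diophantine hypothesis on the columns of $\brho$. Once the iteration has produced a diffeomorphism $h$ with $h^{-1}\al(g_i)h = T_{\rho_i}$ for every $i$, the group relations of $\G_{\bar B}$ combined with \eqref{EqCommute} will force $h^{-1}\al(g_0)h$ to be affine with linear part $\bar A$, and a final translation correction to $h$ removes the resulting constant.

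For the cohomological step, I would write $\al(g_i)(x) = x + \rho_i + f_i(x)$ and seek $h = \id + u$. Linearizing $h\circ\al(g_i) = T_{\rho_i}\circ h$ gives the coupled system
\[
u(x+\rho_i) - u(x) = -f_i(x), \qquad i=1,\ldots,N,
\]
while commutativity of the generators yields the compatibility identity $f_i(x+\rho_j) - f_i(x) = f_j(x+\rho_i) - f_j(x)$ at the linear level, and hypothesis~(3) fixes the zeroth Fourier coefficient of each $f_i$. For every nonzero $n\in\Z^N$, \eqref{e=dioph} selects an index $i(n)$ with $|\langle \rho_{i(n)},n\rangle|\geq C\|n\|^{-\tau}$; defining $\hat u(n) := -\hat f_{i(n)}(n)/(e^{2\pi i\langle \rho_{i(n)},n\rangle}-1)$ solves the $i(n)$-th equation, and the Fourier form of compatibility, $\hat f_i(n)(e^{2\pi i\langle\rho_j,n\rangle}-1) = \hat f_j(n)(e^{2\pi i\langle\rho_i,n\rangle}-1)$, recovers the remaining $N-1$ equations automatically. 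This yields a tame estimate $\|u\|_{C^{r-\sigma}}\lesssim \max_i\|f_i\|_{C^r}$ with a fixed loss $\sigma = \sigma(\tau,N)$. Embedding this linear step in a standard Nash--Moser scheme with mollification absorbs the loss by quadratic convergence, so the composition of the successive conjugacies converges in $C^\infty$ to a diffeomorphism $h$ that is $C^1$-close to identity.

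To recover $\al(g_0)$, set $\al'(g_0) := h^{-1}\al(g_0)h$. The relation $g_0 g_i = \bigl(\prod_j g_j^{\bar b_{ji}}\bigr)g_0$ combined with the exact conjugation of the translations and \eqref{EqCommute} gives $\al'(g_0)(x+\rho_i) - \al'(g_0)(x) = \bar A\rho_i\pmod{\Z^N}$. Iterating over $\Z^N$-combinations of the columns of $\brho$, whose orbit is dense in $\T^N$ by the linear independence in Proposition~\ref{PropFaithful} (itself automatic from SDC), continuity forces $\al'(g_0)(x) = \bar A x + c$ for some constant $c$; when $\bar A - I$ is invertible I solve $(\bar A - I)w = c$ and replace $h$ by $h\circ T_{-w}$, which leaves the conjugacies $T_{\rho_i}$ unchanged while eliminating $c$. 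Unique ergodicity of the translation subaction $\langle T_{\rho_1},\ldots,T_{\rho_N}\rangle$ (Weyl equidistribution on the same dense span) then identifies $h^{-1}_*\mathrm{Leb}$ as the unique $\al$-invariant measure. The main obstacle is the Nash--Moser iteration itself: the compatibility identity holds only up to quadratic terms after each Newton step and must be controlled throughout, the Diophantine index $i(n)$ varies with $n$ in all tame estimates, and the mollification must be balanced against the derivative loss to preserve quadratic convergence. A secondary subtlety is that hypothesis~(2) gives no $C^\ell$ control on $\al(g_0)$, so the iteration must never require one; the $g_0$-conjugacy is obtained only post hoc via the algebraic step just described.
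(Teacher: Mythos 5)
Your proposal follows essentially the same route as the paper: a KAM iteration on the commuting translation generators alone (solving the cohomological equation Fourier-mode by Fourier-mode with the index selected by the simultaneous Diophantine condition, using the linearized commutativity identity for compatibility), followed by the algebraic step in which the group relation plus \eqref{EqCommute} and density of the $\Z$-span of the columns of $\brho$ force $h\al(g_0)h^{-1}$ to be affine, and a final translation by $(\id-\bar A)^{-1}$ of the constant. The only cosmetic differences are that you use density/minimality where the paper invokes ergodicity to conclude $F$ is constant, and you explicitly flag the invertibility of $\bar A-I$, which the paper uses tacitly.
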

We will prove in Appendix \ref{AppDiop} that the simultaneously Diophantine condition is actually satisfied by a large class of matrices $\brho$ and $\bar A,\bar B$ satisfying \eqref{EqCommute}. One special case is when $\bar A=\bar B\in \mathrm{SL}(N,\Z)$ has simple spectrum, in which case any matrix $\brho\in M_N(\R)$ commuting with $\bar A$ has the form $\brho=\sum_{i=1}^N a_i \bar A^{i-1}$, $a_i\in \R$, $i=1,\ldots,N$. The columns of the matrix $\brho$ are simultaneously Diophantine if the nonvanishing $a_i$'s form a Diophantine vector. 
\begin{Rk}
We remark that the faithfulness $($guaranteed by the Diophantine condition$)$ of
the action is necessary for smooth conjugacy. For instance, consider $\rho= 1/2$ in \eqref{EqEg} and $\al(g_i)=\bar\al(g_i),\ i=1,2$, and for any $\eps>0$,
$$\alpha(g_0)\left[\begin{array}{c}
x\\
y
\end{array}
\right]=\left[\begin{array}{cc}
2&1\\
1&1
\end{array}
\right]\left[\begin{array}{c}
x\\
y
\end{array}
\right]+\eps \left[\begin{array}{c}
\sin(4\pi x)\\
\sin(4\pi x)
\end{array}
\right].$$
One can verify that this gives rise to a $\G_{\bar A}$ action.  We will see in Theorem \ref{ThmTop2} that for sufficiently small $\eps,$ there exists a bi-H\"older conjugacy $h$ satisfying $h\circ\alpha=\bar\alpha\circ h$. However, the conjugacy $h$ is not $C^1$. Indeed, $0$ is a fixed point for both $\alpha(g_0)$ and $\bar A$. The derivative $D_0\alpha(g_0)=\bar A+4\pi\eps \left[\begin{array}{cc}
1&0\\
1&0\end{array}
\right]$ has determinant $1$ but different trace than $\bar A$ for $\epsilon\neq 0$, so it is not conjugate to $\bar A$. \end{Rk}

{\bf Question:} {\it Suppose the action is faithful and close to an algebraic action, is it always possible
to smoothly conjugate the action to an algebraic one?}

\subsection{Global rigidity}
The proof of  the above local rigidity theorem is an application of the KAM techniques for $\Z^N$ actions initiated by Moser \cite{M} in the context of $\Z^N$ actions by circle diffeomorphisms.  The KAM technique is essentially perturbative. It is natural to ask if our solvable group action is rigid in the nonperturbative sense, i.e. whether it is globally rigid. A class of actions of a group, not necessarily close to a algebraic actions, is called \emph{globally rigid} if any action from this class is conjugate to an algebraic one. 
There is a nonperturbative global rigidity theory for circle maps known as \emph{Herman-Yoccoz theory}. For abelian group actions by circle diffeomorphisms, the global version of Moser's theorem was proved by Fayad and Khanin \cite{FK}. These global rigidity results rely on the Denjoy theorem stating that a $C^2$ circle diffeomorphism with irrational rotation number is topologically conjugate to the irrational rotation by the rotation number. 

In the higher dimensional case, there is no corresponding Herman-Yoccoz theory for diffeomorphisms of $\T^N$ isotopic to rotations. 
The reason is that rotation vectors are not well-defined in general. Even when rotation vectors are uniquely defined, they are not the complete invariants for conjugacy analogous to rotation numbers for circle maps. In particular, the obvious analogue of the topological conjugacy given by the Denjoy theorem does not exist for diffeomorphisms of $\T^N,\ N>1.$ 

On the other hand, by a theorem of Franks (Theorem \ref{ThmFranks} below), Anosov diffeomorphisms of $\T^N$ are topologically conjugate to toral automorphisms. A  diffeomorphism $f\colon  M\to M$ is called \emph{Anosov} if there exist constants $C$ and $0<\lambda<1$ and for each $x\in M$ a splitting of the tangent space $T_xM=E^s(x)\oplus E^u(x)$  such that for every $x\in M$, we have
\begin{itemize}
\item $D_xf E^s(x)=E^s(f(x))$ and $D_xf E^u(x)=E^u(f(x))$,
\item $\| D_xf^n v\|\leq C \lambda^n\|v\|$ for $v\in E^s(x)\setminus \{0\}$ and $n\geq 0,$ and \\
$\| D_xf^n v\|\leq C \lambda^{-n}\|v\|$ for $v\in E^u(x)\setminus \{0\}$ and $n\leq 0.$
\end{itemize}
 As the starting point of a global rigidity result of our $\G_{\bar A}$ action, we assume $\alpha(g_0)$ acts by an Anosov diffeomorphism homotopic to $\bar A$. With the topological conjugacy at hand, the next question is to show the topological conjugacy given by Franks's theorem also linearizes the abelian subgroup action. The new problem that arises is that for toral diffeomorphisms homotopic to identity the rotation vector is in general not well-defined, and it only makes sense to talk about the rotation set. When there is more than one vector in the rotation set, the diffeomorphism cannot be conjugate to a translation. 

\subsubsection{Topological conjugacy}


The case $N=2$ admits a fairly complete understanding of the topological picture of $ABC$ actions.  In particular, the next result classifies the ABC group actions on $\T^2$ up to topological conjugacy when $g_0$ acts by an Anosov diffeomorphism and the $g_i$, for $i\geq 1$ are not too far from translations, in a sense that we make precise.

\begin{Thm}\label{ThmTop2}
Let  $\bar A,\bar B\in \mathrm{SL}(2,\Z)$ be linear Anosov and $\al\colon  \G_{\bar B}\to \mathrm{Diff}^r(\T^2),$ $r>1$, be a representation satisfying 
\begin{enumerate}
\item$\al(g_0)$ is Anosov and homotopic to $\bar A$;
\item the sub-action generated by $\al(g_1),\ldots,\al(g_N) $ has  sub-linear oscillation $($see Definition \ref{DefSlowoscillation} below$)$ in the case of tr$\bar A$=tr$\bar B$ and $c$-slow oscillation in the case of tr$\bar A\neq$tr$\bar B$ where $c$ is in Remark \ref{RkcSlow}. 
\end{enumerate}
Then there exist $\boldsymbol\rho$ satisfying \eqref{EqCommute} and a unique bi-H\"older homeomorphism $h\colon  \T^2\to \T^2$ homotopic to the identity satisfying $$h\circ\al=\bar\al(\bar A,\boldsymbol\rho)\circ h.$$
\end{Thm}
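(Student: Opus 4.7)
My plan has four steps: (i) apply Theorem~\ref{ThmFranks} to topologically linearize $\al(g_0)$; (ii) use the ABC group relation together with the hyperbolic dynamics of $\bar A$ and the slow-oscillation hypothesis to show that the conjugated maps $\tilde g_i:=h\circ\al(g_i)\circ h^{-1}$ are translations; (iii) read off $\brho$ and verify \eqref{EqCommute}; and (iv) upgrade $h$ to bi-H\"older regularity. For (i), applying Theorem~\ref{ThmFranks} to the Anosov diffeomorphism $\al(g_0)$ gives a continuous $h\colon \T^2\to\T^2$ homotopic to the identity with $h\circ\al(g_0)=\bar A\circ h$; since $\al(g_0)$ and $\bar A$ are both Anosov on $\T^2$, this $h$ is in fact a homeomorphism. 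The conjugates $\tilde g_i$ then commute with one another and satisfy $\bar A\circ\tilde g_i\circ\bar A^{-1}=\prod_j\tilde g_j^{\bar b_{ji}}$.

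Step (ii) is the heart of the argument. Lift $\tilde g_i$ to $\tilde F_i(x)=x+\phi_i(x)$ on $\R^2$ with $\phi_i\colon\T^2\to\R^2$ continuous. Iterating the conjugation relation yields
\[ \bar A^n\phi_i(\bar A^{-n}x)=\Phi_{n,i}(x), \]
where $\Phi_{n,i}(x)$ is the displacement at $x$ of the word $W_{n,i}:=\prod_j\tilde g_j^{(\bar B^n)_{ji}}$. Decomposing $\phi_i=\phi_i^u e^u+\phi_i^s e^s$ along the eigenspaces of $\bar A$ with eigenvalues $\lambda,\lambda^{-1}$, the left side equals $\lambda^n\phi_i^u(\bar A^{-n}x)\,e^u+\lambda^{-n}\phi_i^s(\bar A^{-n}x)\,e^s$. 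The sub-linear (respectively $c$-slow) oscillation hypothesis, transported through $h$, controls the Birkhoff sums $\sum_{k=0}^{m-1}\phi_j(\tilde g_j^k y)$ by $m\bar\phi_j$ up to an error that is negligible compared to the leading growth rate below, where $\bar\phi_j$ denotes the rotation vector of $\tilde g_j$. Using commutativity to unwind $W_{n,i}$ then gives
\[ \Phi_{n,i}(x)=\sum_j(\bar B^n)_{ji}\,\bar\phi_j+o(\lambda^n). \]
Dividing by $\lambda^n$ and letting $n\to+\infty$: when $\mathrm{tr}\,\bar A=\mathrm{tr}\,\bar B$, the matrix $\lambda^{-n}\bar B^n$ converges to the unstable spectral projection of $\bar B$, so for each fixed $x$ the sequence $\phi_i^u(\bar A^{-n}x)$ must converge. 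Since $\phi_i^u$ is continuous and the backward $\bar A$-orbit of any point is dense in $\T^2$, this forces $\phi_i^u$ to be constant. A symmetric argument with $n\to-\infty$ yields constancy of $\phi_i^s$, so $\tilde g_i$ is the translation $x\mapsto x+\rho_i$ with $\rho_i:=\bar\phi_i$. In the case $\mathrm{tr}\,\bar A\ne\mathrm{tr}\,\bar B$, the constant $c$ in the $c$-slow-oscillation hypothesis is chosen precisely as in Remark~\ref{RkcSlow} so that the error term is still dominated by the appropriate growth rate on one side, and the same constancy conclusion follows.

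For step (iii), equating the displacements on both sides of $\bar A\circ\tilde g_i\circ\bar A^{-1}=\prod_j\tilde g_j^{\bar b_{ji}}$ now reduces to $\bar A\rho_i=\sum_j\bar b_{ji}\rho_j\pmod{\Z^2}$, which is exactly \eqref{EqCommute}. Hence $h\circ\al=\bar\al(\bar A,\brho)\circ h$, and uniqueness of $h$ among conjugacies homotopic to the identity is inherited from the uniqueness in Franks's theorem applied to $\al(g_0)$. For step (iv), $h$ is a topological conjugacy between two transitive $C^r$ Anosov diffeomorphisms of $\T^2$ that carries stable/unstable foliations to stable/unstable foliations; standard leafwise H\"older estimates along the foliations together with a Journ\'e-type synthesis across transverse foliations then give bi-H\"older regularity of $h$. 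I expect the main obstacle to be step (ii): rigorously justifying the estimate $\Phi_{n,i}(x)=\sum_j(\bar B^n)_{ji}\bar\phi_j+o(\lambda^n)$ through compositions of exponentially large order, and confirming that the oscillation hypothesis really dominates the leading $\lambda^n$ scale, especially in the $\mathrm{tr}\,\bar A\ne\mathrm{tr}\,\bar B$ case that necessitates the stronger $c$-slow oscillation.
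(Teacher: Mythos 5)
Your outline follows the paper's strategy quite closely: Franks's theorem linearizes $\al(g_0)$, and then the group relation, iterated $n$ times and combined with the (sub-linear or $c$-slow) oscillation bound and the stable/unstable splitting of $\bar A$, kills the non-constant part of the conjugated translations. The only structural difference is that the paper isolates the non-constant part via the Fourier coefficients $\hat R_k(p)$, $k\neq 0$, of the displacement and shows each vanishes from the identity $\hat R_k(p)=\bar A^{-n}\hat R_{(\bar A^t)^{-n}k}((\bar B^t)^np)$, whereas you work pointwise with the displacement and its deviation from the rotation vector; these are interchangeable implementations of the same idea. (The bi-H\"older regularity is, as in the paper, just the standard quantitative refinement of Franks's theorem, \cite{KH} Theorem 19.1.2.)

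Two points in your step (ii) would fail as written and need repair. First, the deduction ``$\phi_i^u(\bar A^{-n}x)$ converges for each fixed $x$, $\phi_i^u$ is continuous, and the backward $\bar A$-orbit of any point is dense, hence $\phi_i^u$ is constant'' is not valid: backward orbits of an Anosov automorphism are not all dense (e.g.\ periodic points), and convergence of a continuous function along a dense orbit does not force constancy. What saves you is that the oscillation hypothesis is a \emph{uniform} bound in $x$, so $\phi_i^u(\bar A^{-n}x)=c_n+o(1)$ with $c_n$ independent of $x$ and the $o(1)$ uniform; then $\sup_{z,w}|\phi_i^u(z)-\phi_i^u(w)|=\sup_{x,y}|\phi_i^u(\bar A^{-n}x)-\phi_i^u(\bar A^{-n}y)|\to 0$, which gives constancy directly. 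Second, in the case $\mathrm{tr}\,\bar A\neq\mathrm{tr}\,\bar B$ the mechanism you invoke breaks down: $\lambda^{-n}\bar B^n$ does not converge (it tends to $0$ or blows up according to whether the spectral radius of $\bar B$ is smaller or larger than $|\lambda|$), so ``the sequence $\phi_i^u(\bar A^{-n}x)$ converges'' is exactly what you cannot assert there. The same two-point comparison fixes this case too, since $c_n$ cancels; the role of the constant $c<\ln|\lambda|/\ln|\mu|$ from Remark \ref{RkcSlow} is precisely to make the error term $O((|\mu|^{c}/|\lambda|)^n)$ tend to zero, which is the only thing the argument actually needs. Finally, be careful with the phrase ``controls the Birkhoff sums \dots by $m\bar\phi_j$'': to get $\phi_p(x)=\brho p+O(\|p\|^c)$ with $\brho$ \emph{linear} in $p$ you should define $\brho p$ as the rotation vector with respect to a common invariant measure of the commuting maps (which is exactly additive in $p$), rather than summing per-generator estimates, whose errors would otherwise accumulate over a word of length $\sim|\mu|^n$.
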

\begin{Rk}
In this theorem faithfulness of the action is not necessary, since we do not need the rotation vectors of $\bar\alpha(g_i)$ to be irrational. 
\end{Rk}

The assumption on the sub-linear oscillation is removed in \cite{HX} by introducing an Anosov foliation Tits' alternative. Here we give the statement and refer the readers to \cite{HX} for the proof. 
\begin{Thm}[\cite{HX}] 

Suppose that $\al: \Gamma_{\bar B} \to \Diff(\T^2)$ is such that:

\begin{enumerate}
\item $\bar B \in \mathrm{SL}(2,\Z)$ is an Anosov linear map $($i.e. $\bar B$ has eigenvalues of norm different than one.$)$
\item The diffeomorphism $\al(g_0)$ is  Anosov and homotopic to $\bar A$.
\end{enumerate}

Then $\al$ is topologically conjugate to an affine action of $\Gamma_{\bar B}$ as in Proposition \ref{p=action} up to finite index. More concretely, there exist a finite index subgroup $\Gamma' <\Gamma_{\bar B}$ and  $h \in \mathrm{Homeo}(\T^2)$ such that $h\al(\Gamma') h^{-1}$ is an affine action.

\end{Thm}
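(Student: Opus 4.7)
The plan is to remove the sub-linear oscillation hypothesis in Theorem \ref{ThmTop2} by showing that it automatically holds on a finite-index subgroup, via a Tits-alternative-style dichotomy applied to the foliation structure coming from the Anosov element $\al(g_0)$. So the heart of the argument is to upgrade the Anosov hypothesis on $\al(g_0)$ into a structural constraint on the commuting generators $\al(g_i)$, then quote Theorem \ref{ThmTop2}.

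First, I would apply Franks' theorem (Theorem \ref{ThmFranks}) to the Anosov diffeomorphism $\al(g_0)$, which is homotopic to the linear Anosov $\bar A$. This produces a unique homeomorphism $h_0:\T^2\to\T^2$ homotopic to the identity with $h_0\circ\al(g_0)=\bar A\circ h_0$, so that the stable and unstable foliations $\cF^s,\cF^u$ of $\al(g_0)$ are the $h_0$-preimages of the two linear eigenfoliations of $\bar A$. This equips $\T^2$ with a pair of transverse foliations together with $\al(g_0)$-equivariant hyperbolic expansion/contraction along them. Writing $\Lambda=\langle g_1,\ldots,g_N\rangle\cong\Z^N$, the group relations \eqref{EqGrpRelation} force $\al(\Lambda)$ to be an abelian subgroup of commuting homeomorphisms of $\T^2$, all homotopic to the identity, that is normalized by the Anosov $\al(g_0)$.

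Second, I would invoke the Anosov foliation Tits' alternative of \cite{HX}, phrased as follows: for any homeomorphism $T$ of $\T^2$ homotopic to identity and normalized by $\al(g_0)$, either $T$ has sub-linear oscillation in the sense of Definition \ref{DefSlowoscillation}, or the group $\langle\al(g_0),T\rangle$ contains a non-abelian free subgroup. The mechanism is a ping-pong construction on the universal cover: super-linear growth of the displacement $\tilde T-\mathrm{id}$ forces a transverse component with respect to $\cF^s$ or $\cF^u$ that is amplified exponentially by iterates $\al(g_0)^{\pm n}$, producing a Schottky pair from suitable words in $\al(g_0)^{\pm n}$ and $T^{\pm 1}$. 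Since $\al(\Gamma_{\bar B})$ is solvable and thus contains no non-abelian free subgroup, applying the dichotomy to each $\al(g_i)$ (and finitely many of their products) forces, after replacing each generator by a fixed finite power, all resulting elements to have sub-linear oscillation. Intersecting these finite-index subgroups of $\Lambda$, and choosing $m$ so that $\al(g_0^m)$ preserves the resulting $\Lambda'$, we obtain a finite-index subgroup $\Gamma':=\langle g_0^m\rangle\ltimes\Lambda'<\Gamma_{\bar B}$ satisfying the hypotheses of Theorem \ref{ThmTop2}.

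Finally, Theorem \ref{ThmTop2} applied to $\al|_{\Gamma'}$ produces a matrix $\brho$ satisfying \eqref{EqCommute} for $(\bar A^m,\bar B^m)$ and a bi-H\"older conjugacy $h:\T^2\to\T^2$ homotopic to identity with $h\circ\al|_{\Gamma'}=\bar\al(\bar A^m,\brho)\circ h$, which is an affine action in the sense of Proposition \ref{p=action}. The main obstacle is the Anosov foliation Tits' alternative itself: one must control the action of $T$ on the leaf space of $\cF^s,\cF^u$ even though $T$ a priori preserves neither foliation, and then convert super-linear oscillation into a bona fide ping-pong pair using the exponential amplification provided by $\al(g_0)$. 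Everything else in the argument is bookkeeping to reduce to Theorem \ref{ThmTop2} on a finite-index subgroup.
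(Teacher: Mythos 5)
A preliminary but important point: the paper does not prove this theorem at all. It is quoted verbatim from \cite{HX}, with the explicit remark that ``the assumption on the sub-linear oscillation is removed in \cite{HX} by introducing an Anosov foliation Tits' alternative'' and that the reader should consult \cite{HX} for the proof. So there is no in-paper argument to compare yours against. Your outline does match the one-line description the paper gives of the method of \cite{HX} (Franks linearization of $\al(g_0)$, then a Tits-alternative dichotomy to force controlled oscillation of the abelian part, then reduction to Theorem \ref{ThmTop2}), so at the level of strategy you have reconstructed the intended route.

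As a proof, however, the proposal has a genuine gap: the entire content of the theorem is the dichotomy you invoke in your second paragraph, and you assert it rather than prove it. Saying that super-linear oscillation ``forces a transverse component amplified by $\al(g_0)^{\pm n}$, producing a Schottky pair'' is exactly the hard analytic/combinatorial work of \cite{HX}; one must make sense of the action of $T$ on the leaf spaces of $\cF^s,\cF^u$ (which $T$ does not preserve), extract uniformly expanded/contracted ping-pong domains from a homeomorphism with no smoothness, and verify the Schottky condition for words in $\al(g_0)^{\pm n}$ and $T^{\pm1}$ — none of which is routine. Two further points are skipped. First, you assume the $\al(g_i)$ are homotopic to the identity; the hypotheses do not give this, and ruling out a nontrivial (necessarily finite-order, by the group relations acting on $H_1(\T^2,\Z)$) linear part of the $\al(g_i)$ is presumably one of the reasons the conclusion only holds on a finite-index subgroup — your finite-index passage is instead extracted from the Tits alternative, which is a different mechanism and leaves this case unaddressed. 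Second, after replacing $g_0$ by $g_0^m$ you apply Theorem \ref{ThmTop2} to a $\G_{\bar B^m}$-type action, but that theorem requires either $\mathrm{tr}\,\bar A^m=\mathrm{tr}\,\bar B^m$ with sub-linear oscillation or the stronger $c$-slow oscillation of Definition \ref{DefSlowoscillation} with the explicit $c$ of Remark \ref{RkcSlow}; your dichotomy only delivers sub-linear oscillation, so the reduction does not close in the case $\mathrm{tr}\,\bar A\neq\mathrm{tr}\,\bar B$.
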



With the notion of $c$-slow oscillation, we also obtain the following result for general $N$. 
\begin{Thm}\label{ThmTopN}
Suppose $N>2$. Given hyperbolic matrices $\bar A,\bar B\in \mathrm{SL}(N,\Z)$, there exists $0\leq c<1$ such that the following holds. 
Let $\al\colon  \G_{\bar B}\to \mathrm{Diff}^r(\T^N)$, $r>1$, be a representation satisfying 
\begin{enumerate}
\item $\al(g_0)$ is Anosov and homotopic to $\bar A$,
\item the sub-action generated by $\al(g_1),\ldots,\al(g_N)$ has $c$-slow oscillation $($see Definition \ref{DefSlowoscillation} below$)$. 
\end{enumerate}
 Then there exist $\boldsymbol \rho$ satisfying \eqref{EqCommute} and a unique bi-H\"older homeomorphism $h\colon  \T^N\to\T^N$ homotopic to the identity with $$h\circ\al=\bar\al(\bar A,\boldsymbol \rho)\circ h.$$ 
\end{Thm}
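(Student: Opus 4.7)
The plan is to use the Franks topological conjugacy $h_0$ of $\al(g_0)$ to $\bar A$ as the global conjugacy, and to prove that the same $h_0$ automatically linearizes every $\al(g_i)$ to a pure translation. Since $\al(g_0)$ is an Anosov $C^r$ diffeomorphism with $r>1$, homotopic to the hyperbolic $\bar A$, Theorem~\ref{ThmFranks} produces a unique bi-H\"older $h_0\colon\T^N\to\T^N$ homotopic to the identity with $h_0\circ\al(g_0)=\bar A\circ h_0$. Set $f_i:=h_0\circ\al(g_i)\circ h_0^{-1}$; these inherit $[f_i,f_j]=1$ and
\[\bar A\circ f_i=\Bigl(\prod_{j=1}^N f_j^{\bar b_{ji}}\Bigr)\circ\bar A.\]
The $c$-slow oscillation of the abelian sub-action (Definition~\ref{DefSlowoscillation}), transported through the bi-H\"older $h_0$ at worst with a mildly degraded exponent still $<1$, forces each $f_i$ to be homotopic to the identity. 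Let $\tilde f_i\colon\R^N\to\R^N$ be the canonical lift and $\Phi_i(x):=\tilde f_i(x)-x$ the continuous $\Z^N$-periodic displacement; put $\rho_i:=\int_{\T^N}\Phi_i\,d\mathrm{Leb}$ and let $\brho$ be the matrix with columns $\rho_1,\ldots,\rho_N$. Comparing mean displacements of the two sides of the displayed commutation relative to $\bar A$ yields $\bar A\rho_i=\sum_j\bar b_{ji}\rho_j\pmod{\Z^N}$, i.e.\ $\brho$ satisfies \eqref{EqCommute}; iterating, $\bar A^k\rho_i=\sum_j(\bar B^k)_{ji}\rho_j\pmod{\Z^N}$ for all $k\ge 1$.

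The core of the proof is to show $\Phi_i\equiv\rho_i$. Iterating the commutation and lifting, there is a constant $n_{i,k}\in\Z^N$ so that
\[\bar A^k\Phi_i(x)=\Psi_i^{(k)}(\bar A^k x)+n_{i,k},\quad \Psi_i^{(k)}(y):=\Bigl(\prod_{j=1}^N\tilde f_j^{(\bar B^k)_{ji}}\Bigr)(y)-y,\]
for all $x\in\R^N$. The $c$-slow oscillation hypothesis yields the uniform estimate $\|\Psi_i^{(k)}-\sum_j(\bar B^k)_{ji}\rho_j\|_\infty\le C\mu^{ck}$, where $\mu>1$ denotes the spectral radius of $\bar B$. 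Using the iterated rotation-vector identity to replace $\sum_j(\bar B^k)_{ji}\rho_j$ by $\bar A^k\rho_i$ up to an integer shift absorbed in $n_{i,k}$, and then subtracting the resulting identity at two points $x,y$ to kill $n_{i,k}$, one obtains
\[\bar A^k\bigl(\Phi_i(x)-\Phi_i(y)\bigr)=\bigl(\Psi_i^{(k)}(\bar A^k x)-\bar A^k\rho_i\bigr)-\bigl(\Psi_i^{(k)}(\bar A^k y)-\bar A^k\rho_i\bigr),\]
whose $\R^N$-norm is bounded by $2C\mu^{ck}$. Let $\pi^u$ denote the projection onto the unstable subspace of $\bar A$ along its stable one; since $\bar A^k$ dilates unstable vectors by at least $\lambda^k$ with $\lambda>1$ the spectral radius of $\bar A$,
\[\|\pi^u(\Phi_i(x)-\Phi_i(y))\|\le 2C(\mu^c/\lambda)^k\xrightarrow{k\to\infty}0\]
provided $\mu^c<\lambda$, so $\pi^u\Phi_i$ is constant. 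The symmetric argument, using the integer-valued relation $\bar A^{-k}g_i\bar A^k=\prod_j g_j^{(\bar B^{-k})_{ji}}$ (valid because $\bar B\in\mathrm{SL}(N,\Z)$) with the roles of $(\bar A,\bar B)$ replaced by $(\bar A^{-1},\bar B^{-1})$, handles the stable subspace. Hence $\Phi_i$ is constant, so $f_i$ is the translation by $\rho_i$. The constant $c<1$ in the theorem is any value below $\min\{\log\lambda/\log\mu,\ \log(\lambda_{\min}^{-1})/\log(\mu_{\min}^{-1})\}$, where $\lambda_{\min},\mu_{\min}\in(0,1)$ are the smallest eigenvalue moduli of $\bar A,\bar B$.

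With each $f_i$ a pure translation, $h:=h_0$ realizes $h\circ\al=\bar\al(\bar A,\brho)\circ h$, is bi-H\"older by Franks, and is unique because any such conjugacy must conjugate $\al(g_0)$ to $\bar A$, for which Franks's theorem gives a unique answer homotopic to the identity. The main obstacle is the core step, which involves two points of care: first, extracting from Definition~\ref{DefSlowoscillation} a \emph{uniform} bound $\|\Psi_i^{(k)}-\sum_j(\bar B^k)_{ji}\rho_j\|_\infty=O(\mu^{ck})$ for the nonlinear joint $\bar B^k$-iterate displacement of the abelian sub-action; second, for $N>2$ the intermediate Lyapunov directions of $\bar A$ and $\bar B$ must all be matched up, and $c$ must be taken below the worst matched-pair ratio. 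This matching problem, absent in the $N=2$ setting (where both nontrivial Lyapunov exponents of each matrix are reciprocal by the $\mathrm{SL}(2,\Z)$ determinant constraint), is precisely why the quantitative $c$-slow oscillation is needed here for \emph{all} pairs $(\bar A,\bar B)$, rather than the weaker sublinear oscillation that sufficed in $N=2$ when $\mathrm{tr}\,\bar A=\mathrm{tr}\,\bar B$, and why the constant $c$ must depend on the spectra of both $\bar A$ and $\bar B$.
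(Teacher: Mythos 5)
Your strategy is the same as the paper's: take the Franks conjugacy $h_0$ of $\al(g_0)$ to $\bar A$, transport the abelian sub-action through it, iterate the group relation to write $\bar A^k\bigl(\tilde R^p(x)-x\bigr)$ in terms of the displacement of $\tilde R^{\bar B^k p}$, and use $c$-slow oscillation plus the stable/unstable splitting of $\bar A$ to force the non-constant part of the displacement to vanish. (The paper isolates the non-constant part via the nonzero Fourier modes $\hat R_k(p)$; you do it via differences $\Phi_i(x)-\Phi_i(y)$ — these are equivalent, and your subtraction trick conveniently sidesteps the question of whether the Lebesgue-average displacement is additive under composition, which your intermediate claim $\|\Psi_i^{(k)}-\sum_j(\bar B^k)_{ji}\rho_j\|_\infty\le C\mu^{ck}$ would otherwise require you to address.)

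There is, however, a concrete error in the key estimate. You assert that ``$\bar A^k$ dilates unstable vectors by at least $\lambda^k$ with $\lambda>1$ the spectral radius of $\bar A$.'' For $N>2$ this is false: the spectral radius is the \emph{largest} eigenvalue modulus, whereas the guaranteed dilation on the whole unstable subspace is governed by the \emph{smallest} unstable eigenvalue modulus $|\lambda_1^u|$ (a vector in the weak unstable direction grows only like $|\lambda_1^u|^k$). The correct condition for your unstable projection to tend to zero is therefore $\mu^c<|\lambda_1^u|$, i.e.\ $c<\ln|\lambda_1^u|/\ln|\mu^u_{k'}|$, and dually the stable-direction argument requires $c<\ln|\lambda_1^s|/\ln|\mu^s_{\ell'}|$, where $\lambda_1^s$ is the \emph{largest} stable eigenvalue modulus of $\bar A$ (weakest contraction) — not the smallest, as in your closing formula $\log(\lambda_{\min}^{-1})/\log(\mu_{\min}^{-1})$. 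This is exactly the threshold \eqref{EqcSlow} in Remark \ref{RkcSlow}; your denominators are right but both numerators use the extremal eigenvalues of $\bar A$ where the ones closest to the unit circle are needed. Since the theorem only asserts the existence of some $c\in[0,1)$, the argument survives once the constants are corrected, so this is a fixable slip rather than a fatal gap — but as written, your claimed decay $2C(\mu^c/\lambda)^k\to 0$ does not hold throughout the range of $c$ you allow. Two smaller points: the transport of $c$-slow oscillation through $h_0$ costs only a bounded additive constant (since $\tilde h_0-\mathrm{id}$ is bounded), no degradation of the exponent; and uniqueness of $h$ does not follow from Franks alone, since the centralizer of $\bar A$ in $\mathrm{Homeo}_0(\T^N)$ contains translations by the finitely many fixed points of $\bar A$ — one must also use the translation part of the action to pin $h$ down.
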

\begin{Rk}\label{RkcSlow}
 The constant $c$ in Theorem \ref{ThmTopN} can be made explicit as follows. Suppose $\bar A$ has eigenvalues $\lb_1^u,\ldots,\lambda^u_k$ and $\lb_1^s,\ldots,\lambda^s_\ell$, $k\geq 1,\ \ell\geq 1$, $k+\ell=N$,  (complex eigenvalues and repeated eigenvalues are allowed), ordered as follows
\begin{equation}|\lambda^s_\ell|\leq \ldots\leq |\lambda_1^s|<1<|\lambda_1^u|\leq \ldots\leq |\lambda_k^u|.\end{equation}
We introduce similar quantities for $\bar B$
\begin{equation}|\mu^s_{\ell'}|\leq \ldots\leq |\mu_1^s|<1<|\mu_1^u|\leq \ldots\leq |\mu_{k'}^u|,\quad \ell'+k'=N.\end{equation}

 Then $c$ can be chosen to be any number satisfying
\begin{equation}\label{EqcSlow}
0\leq c<\min\left\{\frac{\ln |\lambda_1^u|}{\ln|\mu_{k'}^u|}, \frac{\ln |\lambda_1^s|}{\ln|\mu_{\ell'}^s|}\right\}.\end{equation}
\end{Rk}
We next introduce the concept of sublinear deviation and $c$-slow deviation. 
Let $T\in \mathrm{Diff}_0(\T^N)$ and let $\tilde T\colon  \R^N\to \R^N$ be a lift of $T.$ Denote by $\pi_i$ the projection to the $i$-th component of a vector in $\R^N$. Define the \emph{oscillation} $\mathrm{Osc}(\tilde T)$ of $\tilde T$ by: $$\mathrm{Osc}(\tilde T):=\max_{x,i}\{\pi_i(\tilde T(x)-x)\}-\min_{x,i}\{\pi_i(\tilde T(x)-x)\}.$$
It is easy to see that Osc is independent of the choice of the lift. We define Osc$(T)=$Osc$(\tilde T)$.
\begin{Def}\label{DefSlowoscillation} \begin{enumerate} 
\item For given $c\in [0,1)$, we say that the abelian group action $\beta\colon  \Z^N\to \mathrm{Diff}_0^r(\T^N)$ is {\em of $c$-slow oscillation} if $$\limsup_{\|p\|\to\infty}\frac{\mathrm{Osc}(\beta(p))}{\|p\|^c}<\infty.$$ 
\item We say the action $\beta$ is {\em of bounded oscillation} if it is of $0$-slow oscillation. 
\item We say the action $\beta$ has {\em sublinear oscillation} if $$\limsup_{\|p\|\to\infty}\frac{\mathrm{Osc}(\beta(p))}{\|p\|}=0.$$. 
\end{enumerate}
\end{Def}

Let us motivate the definition of $c$-slow oscillation a bit. In the circle map case, the existence and uniqueness of rotation number relies crucially on the fact that the graph in $\R^2$ of every lifted orbit stays within distance $1$ of a straight line, and the rotation number is simply the slope of the line. This fact is also important in the study of Euler class and bounded cohomology for groups acting on circle \cite{Gh}. We say a diffeomorphism $f\colon  \T^N\to\T^N$ is of \emph{bounded deviation} if there exists $\rho\in\T^N$ and a constant $C>0$, such that 
$$\|\tilde f^n(x)-x-n\rho\|_{C^0}\leq C,\quad \forall\ n\in \Z.$$

Being of bounded deviation implies that each orbit of $\tilde f$ stays within bounded distance of the line $\R \rho$. The concept of bounded deviation was first introduced by Morse, who called it of class A, in the case of geodesic flows on surfaces of genus greater than 1 \cite{Mo}. It was later shown by Hedlund that globally minimizing geodesics for an arbitrary smooth metric on $\T^2$ are also of bounded deviation \cite{He}. A generalization to Gromov hyperbolic spaces can be found in \cite{BBI}. In the one-dimensional case, all circle maps are of bounded deviation, from which follows immediately the existence of the rotation number. 

Being of bounded deviation does {\em not} however guarantee the existence of a conjugacy to a rigid translation. In the one dimensional case, a circle map with irrational rotation number 
is only known to be semi-conjugate to a rotation. Denjoy's counter-example shows that the semi-conjugacy cannot be improved to a conjugacy without further assumptions. 
In the two dimensional case, it is known \cite{Ja} that for a conservative pseudo-rotation of bounded oscillation, the rotation vector being totally irrational is equivalent to the existence of a semi-conjugacy to the rigid translation. Examples of diffeomorphisms on $\T^2$ of bounded deviation can be found in \cite{MS}, which are higher dimensional generalizations of Denjoy's examples on $\T^1$. 


It is easy to see that bounded deviation implies $c$-slow oscillation with $c=0$. 
Sublinear oscillation occurs in  first passage percolation (see Section 4.2 of \cite{ADH}) where paths minimizing a cost defined for random walks on $\Z^2$ have $c$-slow oscillation with a power law $c\leq 3/4$ and conjecturally $c=2/3$. 



\subsubsection{Smooth conjugacy}
The conjugacy $h$ in Theorem \ref{ThmTop2} and \ref{ThmTopN} is only known to be H\"older. It is natural to ask if we can improve the regularity. In hyperbolic dynamics, there is a periodic data rigidity theory for Anosov diffeomorphisms, which implies in the two-dimensional case that if the regularity of $h$ is known to be $C^1$, then $h$ is in fact as smooth as the Anosov diffeomorphism $\alpha(g_0)$ (see Theorem \ref{ThmdelaLlave} below). 

So the problem is now to find sufficient conditions for our action to ensure that the conjugacy $h$ is $C^1$. The invariant foliation structure given by the Anosov diffeomorphism enables us to generalize the Herman-Yoccoz theory for circle maps to the higher dimensional setting. 

To obtain higher regularity of the conjugacy, we consider a slightly different class of ABC groups $\G_{\bar B,K}$ for some $K\geq 1.$ 

We introduce the following condition: 

\centerline{$(\star)$ {\it $\brho$ rationally generates $\T^N$,}}
\noindent meaning:
 the set $\left\{\sum_{i=1}^N p_i\brho_i\mathrm{\ mod\ }\Z^N\ |\ (p_1,\ldots,p_N)\in \Z^N\right\}$ is dense in $\T^N$,  where $\brho_i$ denotes the $i$th column of $\brho$.



\begin{Thm}\label{Thm2} 
Let $\bar A,\bar B\in \mathrm{SL}(2,\Z)$ with tr$\bar A$=tr$\bar B$. Given an Anosov diffeomorphism $A\colon  \T^2\to \T^2$ homotopic to $\bar A\in\mathrm{SL}(2,\Z)$, there is a $C^1$ open set $\mathcal O$ of Anosov diffeomorphisms containing $A$, and a number $K_0$ such that for any integer $K\geq K_0$, there exists a full measure set $\mathcal R_{2,K}\subset (\T^2)^K$ such that the following holds. 

Let $\alpha\colon  \G_{\bar B,K}\to \mathrm{Diff}^r(\T^2)$ be a representation satisfying:
\begin{enumerate}
\item $\alpha(g_0)\in \mathcal O$,
\item the sub-action generated by $\al(g_{1,1}),\al(g_{2,1})$ has sub-linear oscillation, 
 and assume in addition that $\boldsymbol \rho$ given by Theorem \ref{ThmTop2} satisfies $(\star)$. 
 \item for some $i\colon  \{1,\ldots,K\}\to \{1,2\}$, the rotation vectors $(\rho_{i(1),1},\ldots, \rho_{i(K),K})$ lie in $\mathcal R_{2,K}$, where $\rho_{j,k}$ is the rotation vector of $\alpha(g_{j,k})$ with respect to an invariant probability measure $\mu_{j,k}$, $j=1,2$ and $k=1,\ldots,K$.
  \end{enumerate}
  Then there exists a unique $C^{r-\eps}$ conjugacy $h$ conjugating the action $\al$ to an affine action for $\eps$ arbitrarily small.
\end{Thm}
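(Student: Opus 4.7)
The plan is to obtain a bi-H\"older conjugacy from Theorem~\ref{ThmTop2} and then bootstrap its regularity in two stages: first to $C^1$ using a foliated Herman-Yoccoz argument that exploits the large number of commuting translations, and then to $C^{r-\eps}$ via periodic data rigidity. Hypotheses (1) and (2) are precisely those of Theorem~\ref{ThmTop2} applied to the sub-action $\al|_{\Gamma_{\bar B}}$ with $\Gamma_{\bar B}=\langle g_0,g_{1,1},g_{2,1}\rangle\subset \Gamma_{\bar B,K}$, producing a unique bi-H\"older homeomorphism $h\colon \T^2\to\T^2$, homotopic to the identity, with $h\circ\al|_{\Gamma_{\bar B}}=\bar\al(\bar A,\brho)\circ h$. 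Using the group relations \eqref{EqGrpRelation} and the uniqueness of $h$, one checks that the same $h$ also conjugates each remaining generator $\al(g_{i,k})$ $(k\geq 2)$ to the rigid translation $x\mapsto x+\rho_{i,k}$, where $\rho_{i,k}$ is the rotation vector of $\al(g_{i,k})$ relative to $h^{-1}_*\mathrm{Leb}$. It therefore suffices to upgrade $h$ from bi-H\"older to $C^{r-\eps}$.

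Since $\al(g_0)\in\mathcal O$ is $C^1$-close to the Anosov map $A$, its stable and unstable foliations $\cF^s,\cF^u$ are $C^{1+\bt}$ for some $\bt>0$. In the linear model $\bar A$, both foliations are preserved by every translation; pulling this back through $h$ shows that the full subgroup $\al(\Z^{2K})$ preserves $\cF^s$ and $\cF^u$, with each $\al(g_{i,k})$ displacing leaves transversally by the corresponding component of $\rho_{i,k}$. I define $\mathcal R_{2,K}\subset(\T^2)^K$ to be the set of $K$-tuples whose projections onto $E^s$ and onto $E^u$ (one-dimensional since $N=2$) are simultaneously Diophantine in the sense of Definition~\ref{DefDiop}; a standard Borel-Cantelli estimate shows this set has full measure once $K\geq K_0$ for a threshold depending on the spectra of $\bar A$ and $\bar B$. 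For rotation data in $\mathcal R_{2,K}$, the $\Z^K$-subaction $\{\al(g_{i(k),k})\}_{k=1}^K$ restricted to the transverse structure of $\cF^s$ is bi-H\"older conjugate through $h$ to a $\Z^K$-action by rigid shifts on $\R$ with simultaneously Diophantine shift vectors. A foliated generalization of the Herman-Yoccoz theory for $\Z^K$-actions on the circle (in the spirit of Fayad-Khanin \cite{FK}) then inverts the tower of cohomological equations $u\circ\al(g_{i(k),k})-u=-\psi_{i(k),k}$ leafwise, yielding $C^{r-\eps'}$ regularity of $h$ along each leaf of $\cF^s$, and symmetrically along each leaf of $\cF^u$.

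Smoothness of $h$ along two transverse $C^{1+\bt}$ foliations enables Journ\'e's lemma, so $h\in C^1(\T^2)$. Theorem~\ref{ThmdelaLlave} (two-dimensional periodic data rigidity) then promotes this $C^1$ conjugacy between the $C^r$ Anosov maps $\al(g_0)$ and $\bar A$ to the desired $C^{r-\eps}$ conjugacy, and uniqueness of $h$ is inherited from Theorem~\ref{ThmTop2}. The main obstacle is the foliated Herman-Yoccoz step: classical Herman theory requires a compact circle, whereas here the leaves are non-compact dense $\R$-lines in $\T^2$, so one must transfer the argument to a transverse holonomy picture built from the Anosov structure and verify that the simultaneous Diophantine condition encoded by $\mathcal R_{2,K}$ is strong enough to invert the cohomological operators leafwise with only a bounded loss of derivatives; the hypothesis $K\geq K_0$ is what guarantees that $\mathcal R_{2,K}$ is of full measure and large enough to furnish the Diophantine vectors needed to run the scheme.
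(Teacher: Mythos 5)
Your outline agrees with the paper at the endpoints (Theorem \ref{ThmTop2} to get a bi-H\"older $h$, extension to all generators via the group relations and $(\star)$, Journ\'e's lemma, then Theorem \ref{ThmdelaLlave}), but the central analytic step --- upgrading $h$ from H\"older to $C^1$ along the invariant leaves --- is where your argument has a genuine gap, and your choice of $\mathcal R_{2,K}$ is not the condition that makes this step work. You define $\mathcal R_{2,K}$ by a simultaneous Diophantine (small-divisor) condition on the projections of the rotation vectors to $E^s$ and $E^u$, and then propose to ``invert the tower of cohomological equations leafwise'' in a foliated Herman--Yoccoz scheme. Solving cohomological equations with small divisors is the mechanism of the \emph{perturbative} Theorem \ref{ThmLocal}; in the global setting there is no smallness of the data $\psi_{i(k),k}$, the leaves are non-compact dense lines so there is no Denjoy--Koksma/renormalization structure to supply a priori bounds, and you acknowledge this obstacle without resolving it. That unresolved step is the entire content of the theorem.

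The paper's route is different and avoids cohomological equations altogether. The full-measure set $\mathcal R_{2,K}$ is defined via the quantitative Kronecker theorem (Theorem \ref{ThmFedja} and Proposition \ref{PropK0}): the set $S$ of integer combinations of the rotation vectors must have \emph{dimension} $d$ in the sense of Definition \ref{DefDim} (every point of $\T^2$ is $c\ell^{-d}$-approximated by a word of length $\leq\ell$), with $2/d<\eta^2$ where $\eta$ is the H\"older exponent of $h$ and of the invariant distributions; this is what fixes $K_0$. Then Lemma \ref{LmLyap} (vanishing Lyapunov exponents, forced by the bi-H\"older conjugacy to translations) combined with this quantitative density yields Proposition \ref{PropMain}: $\bigl|\log\|D_xT_\gamma v(x)\|\bigr|\leq \mathrm{const.}\|\gamma\|^{\nu}$ for $\gamma\in S$ near $0$, i.e.\ the subaction is an ``almost isometry'' along the leaves. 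Feeding this into the Katznelson--Ornstein averaging criterion (Corollary \ref{CorKO} and Proposition \ref{PropMoC}, based on the Birkhoff-average formula \eqref{EqBirkhoff} for $\tilde h$ over the whole ergodic $\Z^{2K}$-action on $\T^2$, not leafwise) gives $C^{1+\nu}$ regularity of $h$ along $\cF^s$ and $\cF^u$ --- note only $C^{1+}$, not the $C^{r-\eps'}$ you claim at this stage; the jump to $C^{r-\eps}$ comes only afterwards from periodic data rigidity. To repair your proof you would need to replace the leafwise cohomological-equation step by an argument of this type (or supply a genuinely new global linearization on non-compact leaves), and replace your Diophantine definition of $\mathcal R_{2,K}$ by the quantitative-density condition that the almost-isometry estimate actually requires.
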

\begin{Thm}\label{ThmMain}
Given a hyperbolic $\bar A\in\mathrm{SL}(N,\Z)$, $N>2$, with simple real spectrum, there exist a $C^1$ neighborhood $\mathcal O$ of $\bar A$, a number $0\leq c<1$ and a number $K_0$, such that for any integer $K>K_0$, there exists a full measure set $\mathcal R_{N,K}\subset (\T^N)^K$ such that the following holds. 

Let  $\alpha\colon  \Gamma_{\bar B,K}\to \mathrm{Diff}^r(\T^N)$ be a representation satisfying
\begin{enumerate}
\item $\al(g_0)\in \mathcal O$;
\item the sub-action generated by $\al(g_{1,1}),\ldots, \al(g_{N,1})$ has $c$-slow oscillation and assume in addition that $\brho$ given by Theorem \ref{ThmTopN} satisfies $(\star)$; 
\item  for some $i\colon  \{1,\ldots,K\}\to \{1,\ldots,N\}$, the rotation vectors $(\rho_{i(1),1},\ldots, \rho_{i(K),K})$ lie in $\mathcal R_{N,K}$, 
where $\rho_{j,k}$ is the rotation vector of $\alpha(g_{j,k})$ with respect to an invariant probability measure $\mu_{j,k}$, $j=1,\ldots,N$ and $k=1,\ldots,K$.
\end{enumerate}
Then  there is a unique $C^{1,\nu}$ conjugacy
$h$ conjugating $\al$ to an affine action for some $\nu>0$.
\end{Thm}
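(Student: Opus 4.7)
The plan is to combine the bi-H\"older topological conjugacy supplied by Theorem \ref{ThmTopN} with a Herman--Yoccoz type leafwise regularity argument along the one-dimensional invariant foliations of $\al(g_0)$, and then to globalize via Journ\'e's regularity lemma. Since $\bar A$ has simple real spectrum, $\R^N$ splits into $N$ one-dimensional $\bar A$-invariant lines $E_1,\ldots,E_N$. After shrinking $\mathcal{O}$, the corresponding dominated splitting for $\al(g_0)$ integrates into $N$ pairwise transverse one-dimensional $C^{1+\nu_0}$ foliations $\cW^{(1)},\ldots,\cW^{(N)}$ of $\T^N$, and uniqueness of the finest dominated splitting forces each generator $\al(g_{i,k})$ to leave every $\cW^{(j)}$ invariant. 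First I would apply Theorem \ref{ThmTopN} to the subgroup $\G_{\bar B}=\G_{\bar B,1}$ to obtain a bi-H\"older conjugacy $h$ intertwining the $\G_{\bar B}$-action with $\bar\al(\bar A,\brho)$; because the additional generators $\al(g_{i,k})$, $k\geq 2$, commute with $\al(g_0)$, the same $h$ intertwines them with homeomorphisms commuting with $\bar A$, which by Proposition~\ref{p=action} are translations with rotation matrices $\brho_2,\ldots,\brho_K$ satisfying \eqref{EqCommute}. In particular $h$ carries each $\cW^{(j)}$ to the linear foliation parallel to $E_j$.

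The core of the argument is leafwise regularity. Fix $j\in\{1,\ldots,N\}$ and a leaf $L$ of $\cW^{(j)}$, and parametrize $L$ by pulling back the canonical affine parameter on $h(L)\subset E_j$. The chosen generators $\al(g_{i(k),k})$, $k=1,\ldots,K$, act on $L$ by $C^r$ diffeomorphisms that commute with the contracting (or expanding) $\al(g_0)$-dynamics on $L$, and under conjugation by $h$ become translations of $E_j$ by the scalars $\pi_j(\rho_{i(k),k})$, where $\pi_j$ is the projection onto $E_j$. I would define $\mathcal{R}_{N,K}$ to be the set of tuples $(\rho_1,\ldots,\rho_K)\in(\T^N)^K$ for which, for every $j=1,\ldots,N$, the scalar tuple $(\pi_j\rho_1,\ldots,\pi_j\rho_K)$ is simultaneously Diophantine in the sense of Definition~\ref{DefDiop}; for $K\geq K_0$ this intersection has full Lebesgue measure by the estimates developed in Appendix \ref{AppDiop}. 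Under this simultaneous Diophantine condition I would apply a Herman--Yoccoz / Fayad--Khanin type theorem (cf.\ \cite{FK,M}) to the $\Z^K$-action on $L$, with the $\al(g_0)$-action furnishing a renormalization that replaces the Denjoy compactness used in the classical circle case and with condition $(\star)$ guaranteeing density of orbits. This produces $C^{1,\nu}$ regularity of $h|_L$, with uniform bounds across leaves supplied by the topological transitivity of $\al(g_0)$.

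Once $h$ is uniformly $C^{1,\nu}$ along each of the $N$ pairwise transverse $C^{1+\nu_0}$ foliations $\cW^{(j)}$, iterated application of Journ\'e's regularity lemma yields global $C^{1,\nu}$ regularity. Uniqueness follows because any two such conjugacies differ by a homeomorphism commuting with the full affine action; by $(\star)$ it commutes with a dense subgroup of translations and hence is itself a translation, and commutation with $\bar A$ then forces it to equal the identity since $\bar A-I$ is invertible on $\R^N$. The main obstacle will be executing the leafwise Herman--Yoccoz step on a non-compact immersed leaf $L$ whose closure fills $\T^N$: the necessary uniform estimates must be extracted from the commutation with $\al(g_0)$ and transferred to all leaves using minimality, and $K_0$ must be chosen large enough that the simultaneous Diophantine condition holds in every coordinate direction $j$ on a full-measure subset $\mathcal{R}_{N,K}$.
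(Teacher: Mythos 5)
Your overall architecture (bi-H\"older conjugacy from Theorem \ref{ThmTopN}, leafwise regularity along one-dimensional invariant foliations, then Journ\'e) matches the paper's, but there are two genuine gaps, and the first is exactly the point the paper identifies as the main difficulty of the case $N>2$. You assert that each generator $\al(g_{i,k})$ preserves every one-dimensional foliation $\cW^{(j)}$ ``by uniqueness of the finest dominated splitting,'' and that $h$ carries $\cW^{(j)}$ to the linear foliation parallel to $E_j$. Neither claim is available at the outset: the $\al(g_{i,k})$ are homotopic to the identity, carry no hyperbolicity of their own, and do not commute with $\al(g_0)$ (the group relation conjugates $\al(g_0)$ to a different word), so no dominated-splitting argument applies to them. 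The only route to invariance is through $h$ itself (Lemma \ref{LmInvariant}), and {\em a priori} $h$ is only known to match the weak flags $\cW^u_{\leq i}$ with $\bar\cW^u_{\leq i}$ (Proposition \ref{PropGogolev}); that $h(\cW^u_{i+1}(x))=\bar\cW^u_{i+1}(h(x))$ for the individual intermediate directions is deduced in the paper only {\em after} $h$ is shown to be $C^{1+}$ along $\cW^u_{\leq i}$, via Lemma \ref{LmGKS}, and the whole proof is an induction alternating this lemma with the elliptic regularity step and Journ\'e. Starting from ``all $N$ foliations are invariant and matched by $h$'' assumes the conclusion of that induction.

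The second gap is that your key analytic step --- a Herman--Yoccoz/KAM theorem applied to the $\Z^K$-action restricted to a single non-compact, dense leaf $L$ --- is not executed, and you acknowledge as much. The paper does not argue leaf by leaf with a leafwise Diophantine condition on the projections $\pi_j(\rho_{i(k),k})$ (it is also unclear that your set $\mathcal R_{N,K}$, defined by requiring simultaneous Diophantineness of these projections for every $j$, has full measure, since Appendix \ref{AppDiop} concerns matrices commuting with $\bar A$, not arbitrary tuples). Instead it works globally on $\T^N$: the full-measure set comes from the quantitative Kronecker theorem (Theorem \ref{ThmFedja}/Proposition \ref{PropK0}), which gives the set $S$ of integer combinations of the rotation vectors a polynomial density rate (``dimension'' $d$ with $2/d<\eta^2$); combining this with vanishing Lyapunov exponents (Lemma \ref{LmLyap}) and the bi-H\"older bounds yields the modulus-of-continuity estimate $|\log\|D_xT_\gamma v(x)\||\leq C\|\gamma\|^{\nu}$ of Proposition \ref{PropMain}, and then the Katznelson--Ornstein averaging of Corollary \ref{CorKO} and Proposition \ref{PropMoC} upgrades $h$ to $C^{1+}$ along the leaves. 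If you want to salvage your outline, you should replace the leafwise KAM step by this global averaging mechanism and insert the Gogolev--Kalinin--Sadovskaya bootstrap to justify the invariance of the intermediate foliations.
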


In dimension $3$, the regularity of the conjugacy can be improved applying the work of Gogolev in \cite{G2} (Theorem \ref{ThmGogolev} below). 
\begin{Cor}\label{Thm3} Under the same assumptions as Theorem \ref{ThmMain}, suppose in addition that $N=3$ and $r>3$. Then the conjugacy $h\in C^{r-3-\eps},$ for arbitrarily small $\eps.$ Moreover, there exists a $\kappa\in \Z$ such that if $r\notin(\kappa,\kappa+3)$, then $h\in C^{r-\eps}.$
\end{Cor}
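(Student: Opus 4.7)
The plan is to concatenate Theorem~\ref{ThmMain} with Gogolev's bootstrap Theorem~\ref{ThmGogolev}. The hypotheses of Theorem~\ref{ThmMain} are exactly those assumed in the corollary, so I would first invoke it directly to produce a unique $C^{1,\nu}$ conjugacy $h\colon\T^3\to\T^3$ homotopic to the identity and satisfying $h\circ\alpha(g)=\bar\alpha(\bar A,\brho)(g)\circ h$ for every $g\in\Gamma_{\bar B,K}$. Specializing to $g=g_0$ gives $h\circ\alpha(g_0)\circ h^{-1}=\bar A$, so $h$ is a $C^{1,\nu}$ conjugacy between the $C^r$ Anosov diffeomorphism $\alpha(g_0)$ on $\T^3$ and the linear Anosov automorphism $\bar A\in\mathrm{SL}(3,\Z)$ with simple real spectrum.

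Second, I would feed this configuration into Gogolev's Theorem~\ref{ThmGogolev}, which is designed precisely to bootstrap the regularity of such a conjugacy in dimension three. That theorem upgrades a $C^1$ conjugacy between a $C^r$ Anosov diffeomorphism of $\T^3$ and its linearization to a $C^{r-3-\eps}$ diffeomorphism for every $\eps>0$; moreover, if $r$ avoids a specific exceptional interval of the form $(\kappa,\kappa+3)$ determined by the eigenvalues of $\bar A$, the regularity improves to $C^{r-\eps}$. This yields both conclusions of the corollary for the element $\alpha(g_0)$.

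Third, because $h$ is the \emph{same} homeomorphism conjugating every element of the group simultaneously, and because each $\bar\alpha(\bar A,\brho)(g_{i,k})$ is a rigid translation (in particular real-analytic), the improved regularity of $h$ obtained from $\alpha(g_0)$ transfers automatically to the relations $h\circ\alpha(g_{i,k})=\bar\alpha(g_{i,k})\circ h$. No separate bootstrap step for the abelian generators is required.

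The only issue worth verifying carefully is that the hypotheses of Theorem~\ref{ThmGogolev} are met: $\bar A$ must have simple real spectrum (assumed in Theorem~\ref{ThmMain} and inherited here), $\alpha(g_0)$ must be $C^r$ Anosov with $r>3$ (assumed), and the conjugacy must already be at least $C^1$ (supplied by Theorem~\ref{ThmMain} via the $C^{1,\nu}$ conclusion). I do not anticipate any serious obstacle in this argument; the corollary is essentially a clean black-box combination, and all the nontrivial analytic work is done once by Theorem~\ref{ThmMain} to produce the $C^1$ conjugacy and once by Gogolev to push that regularity up to the Anosov element's natural smoothness class, with the exceptional interval reflecting the resonance conditions intrinsic to the dimension-three bootstrap.
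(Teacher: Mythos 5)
Your proposal is correct and is exactly the paper's (essentially one-line) argument: feed the $C^{1,\nu}$ conjugacy produced by Theorem~\ref{ThmMain} into Gogolev's bootstrap Theorem~\ref{ThmGogolev}. The only point you gloss over is that the stated hypothesis of Theorem~\ref{ThmGogolev} is coincidence of periodic data rather than existence of a $C^1$ conjugacy --- but this follows immediately by differentiating $h\circ\alpha(g_0)^q=\bar A^q\circ h$ at $q$-periodic points, and since conjugacies homotopic to the identity between $\alpha(g_0)$ and $\bar A$ are unique up to composition with a translation, the regularity of the conjugacy Gogolev produces transfers to the $h$ of Theorem~\ref{ThmMain}.
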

Further relaxation of the assumptions of Theorem \ref{ThmMain} and Corollary \ref{Thm3} is possible, snd we discuss this in Section \ref{SSAlternative}. In particular, in many cases the condition on the $C^1$ closeness of $A$ to $\bar A$ can be relaxed.

For the $N>2$ case, the elliptic dynamics techniques in two dimensions carry over completely. However, there are two new obstructions that come from the hyperbolic dynamics. On the one hand, a conjugacy between two Anosov diffeomorphisms sends (un)stable leaves to (un)stable leaves. On the other hand the affine foliations parallel to the {\em eigenspaces} of $\bar A$ might not be sent to $A$-invariant foliations with smooth leaves. Adding to the difficulty is the fact that the regularity of the weakest stable and unstable distributions are low (only H\"older in general). These issues present an obstacle to developing a theory of periodic data rigidity as strong as the two-dimensional setting. The most general result \cite{G1,GKS} in this direction for $N>2$  states that {\em if $A$ and  $\bar A$ are $C^1$ close} and  have the same periodic data, then the conjugacy $h$ is  $C^{1+}$ (i.e. $Dh$ and $Dh^{-1}$ are H\"older).  


The paper is organized as follows. We prove the
local rigidity Theorem \ref{ThmLocal} in Section \ref{SLocal}. All the remaining sections are devoted to the proof
of the global rigidity results. In Section \ref{SConj}, we prove that there is a common conjugacy (Theorem \ref{ThmTop2} and \ref{ThmTopN}). In Section \ref{SEllipHyp}, we prepare techniques from elliptic dynamics and hyperbolic
dynamics. In
Section \ref{SC1}, we state and prove the main propositions needed for the proof of Theorem \ref{Thm2} and \ref{ThmMain}. In Section \ref{SProofs}, we prove the main Theorems \ref{Thm2} and \ref{ThmMain}. 
In Appendix \ref{Appendix}, we give the proof of the number theoretic result Theorem \ref{ThmFedja}. In Section \ref{SAffine}, we prove the results about affine actions stated in Section \ref{SSAffine}. 

\section{Local rigidity: proofs}\label{SLocal}
In this section, we prove Theorem \ref{ThmLocal}. 
Here is a sketch.  
Given representation $\al\colon  \Gamma_{\bar B}\to\mathrm{Diff}^\infty(\T^N)$ with $(\rho_{\mu_1}(\al(g_1)),\ldots,\rho_{\mu_N}(\al(g_N)))=\boldsymbol \rho\in \mathrm{SDC}(C,\tau)$, where $\mu_i$ is a invariant probability measure of $\al(g_i)$, we can proceed as in \cite{M} using the KAM method to show that the abelian subgroup action can be smoothly conjugated to rigid translations. Using the group relation, we can further show that this conjugacy also conjugates the diffeomorphism $\al(g_0)$ to a linear one. 

The following proposition is proved by the standard KAM iteration procedure.
\begin{Prop}[KAM for abelian group actions]\label{LmKAM} 
Given $C,\tau>0$, there exist $\ell\geq 1$
and $\eps_0>0$ such that the following holds. 

Let $T_1,\ldots, T_m\in \mathrm{Diff}_0^\infty(\T^N)$ be commuting diffeomorphisms with $m>1$.
Suppose there exist $T_k$-invariant measures $\mu_k$ such that the rotation vectors $\rho_{\mu_k}(T_k),\ k=1,\ldots,m,$ satisfy the simultaneous Diophantine condition with constants $C,\tau$, and $$\max_{1\leq k\leq m}\|T_k-\mathrm{id}-\rho_{\mu_k}(T_k)\|_{C^\ell} < \eps_0.$$ 
Then there exists a $C^\infty$ diffeomorphism $h$ that is $C^1$ close to
the identity such that $$h\circ T_k(x)= h(x)+\rho_{\mu_k}(T_k),\quad x\in \T^N,\quad k=1,\ldots,m.$$ 
Moreover the invariant measure $\mu = h^{-1}_*\mathrm{Leb}$, where $\mathrm{Leb}$ is Haar measure
on $\T^N$,  satisfies $\rho_\mu(T_k)=\rho_{\mu_k}(T_k)$,\ $k=1,\ldots,m$.\end{Prop}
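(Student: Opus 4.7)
The plan is to adapt Moser's KAM scheme for commuting circle diffeomorphisms \cite{M} to the $\T^N$ setting. Write each $T_k = R_{\alpha_k}+f_k$, where $\alpha_k := \rho_{\mu_k}(T_k)$, $R_\alpha(x)=x+\alpha$, and the error $f_k$ is small in $C^\ell$ by hypothesis. The goal is to construct $h=\mathrm{id}+u$ with $h\circ T_k = R_{\alpha_k}\circ h$ by a Newton iteration that at each stage reduces the error quadratically at a controlled loss of regularity, then check the normalization of the invariant measure at the end.

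The linearized conjugacy equation at a given stage is $u(x+\alpha_k)-u(x) = -f_k(x)$ for $k=1,\ldots,m$, which in Fourier coefficients reads $(e^{2\pi i\langle n,\alpha_k\rangle}-1)\hat u(n) = -\hat f_k(n)$. No individual $\alpha_k$ need be Diophantine, but the simultaneous Diophantine hypothesis guarantees that for each nonzero $n\in\Z^N$ there is an index $k(n)$ for which the denominator has size $\gtrsim C\|n\|^{-\tau}$. The commutation relations $T_j\circ T_k=T_k\circ T_j$, linearized around translations, produce the compatibility identity $(e^{2\pi i\langle n,\alpha_j\rangle}-1)\hat f_k(n) = (e^{2\pi i\langle n,\alpha_k\rangle}-1)\hat f_j(n)$ modulo a quadratic remainder in $f_\bullet$. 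This is the Moser trick: one defines $\hat u(n) := -\hat f_{k(n)}(n)/(e^{2\pi i\langle n,\alpha_{k(n)}\rangle}-1)$, and the resulting $u$ approximately solves all $m$ coboundary equations simultaneously, with residual genuinely of order $\|f\|^2$.

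The iteration proceeds by truncating $u$ at Fourier frequency $N_j\to\infty$ with, say, $N_{j+1}=N_j^{3/2}$, conjugating by $h_j=\mathrm{id}+u_j$, and estimating the new errors $\tilde f_k := h_j\circ T_k\circ h_j^{-1}-R_{\alpha_k}$ in a tame Banach scale on $\mathrm{Diff}_0^\infty(\T^N)$. A crucial point is that the rotation vector is a conjugacy invariant once one transports the invariant measure $\mu_k$ to $(h_j)_*\mu_k$; hence $(\alpha_1,\ldots,\alpha_m)\in\mathrm{SDC}(C,\tau)$ is preserved at every stage, and the same small-divisor bound applies throughout. Standard Nash--Moser estimates then give quadratic convergence provided $\ell$ is chosen sufficiently large in terms of $\tau$ and $N$, and the limit $h$ is a $C^\infty$ diffeomorphism $C^1$-close to the identity simultaneously conjugating every $T_k$ to $R_{\alpha_k}$. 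Finally, $\mu:=h^{-1}_*\mathrm{Leb}$ is invariant under every $T_k$; minimality of the translation action, which holds because the Diophantine $\alpha_k$'s generate a dense subgroup of $\T^N$, makes $\mu$ the unique common invariant measure, so $\mu=\mu_k$ and $\rho_\mu(T_k)=\alpha_k$.

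The technical heart of the argument is the Moser commutativity trick in the inductive step: one must verify that the defect in the Fourier compatibility identity produced by the nonlinearities in $T_j\circ T_k = T_k\circ T_j$ is genuinely quadratic in $\|f_\bullet\|$, and balance the polynomial small-divisor loss $N_j^{C\tau}$ against the quadratic gain so that the Newton scheme closes. The remaining ingredients — tame composition and inversion estimates on $\mathrm{Diff}_0^\infty(\T^N)$, Fourier smoothing, and invariance of $\rho_\mu$ under conjugation paired with pushforward of measure — are by now standard in the KAM literature and require only careful bookkeeping.
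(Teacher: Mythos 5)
Your proposal follows essentially the same route as the paper's proof: Moser's commutativity trick to solve the linearized coboundary equation via the simultaneous Diophantine condition and a choice of index $k(n)$ for each frequency, Fourier truncation with tame estimates, quadratic convergence of the Newton scheme, and unique ergodicity of the limiting translation action to identify $\mu=h^{-1}_*\mathrm{Leb}$ and the rotation vectors. The only point worth making explicit is that the conjugacy-invariance of the rotation vector (with respect to the transported measures $(h_j)_*\mu_k$) is used in the paper not to preserve the SDC --- the $\alpha_k$ never change --- but to show that the zero Fourier mode of the new error, which the coboundary equation cannot remove, vanishes against the transported measure and is therefore itself quadratically small at each step.
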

\begin{proof}
The proof of this lemma is essentially the same as Moser \cite{M}. A proof was sketched by F. Rodriguez-Hertz in the case of $m=2, N = 2$ (see Theorem 6.5 of \cite{R1}). It is not difficult to adapt the proof to the case $N > 2, m\geq 2$. The only complexity in the case $N>1$ is caused by the fact that the rotation vector is in general not uniquely defined. Here we give a sketch of the KAM iteration procedure to explain how to incorporate the rotation vector and invariant measure. 


 Given $T_1,\ldots T_m$, we want to find a conjugacy $h$ as stated. The strategy of the KAM iteration scheme is to find a sequence $\{h^{(n)}\},n\geq 0$ of diffeomorphisms such that $\lim_n \mathscr H_n \to h$ in the $C^1$ topology, where $\mathscr H_n =h^{(n)}\circ\cdots\circ h^{(1)}$. 
This limit $h$ is {\em a priori} only $C^1$, but a standard argument then shows that $h$ is smooth.
Let $h^{(0)} = id$, and for $k=1,\ldots, m$, let  $T_k^{(n)}(x):=\mathscr H_n T_k \mathscr H_n^{-1}(x)$, for $n\geq 0 $.  Let $R_k^{(n)}\colon \T^N\to \R^N$ be defined by the equation $T_k^{(n)}(x) = x+\rho_{\mu_k}(T_k)+R_k^{(n)}(x)$.

We show that  for each $k$,  the sequence $R_{k}^{(n)}$ converges to zero in the $C^1$ topology as $n\to\infty$. When $T_k^{(n-1)}$ is known from the previous step, each $h^{(n)}$ is found by solving the linearization of the equation $h^{(n)}T_k^{(n-1)}= h^{(n)}+\rho_{\mu_k}(T_k)$. Since the equation is not solved exactly in each step, the conjugated map $T_k^{(n)}=h^{(n)}T_k^{(n-1)}(h^{(n)})^{-1}$ is not yet the translation $x\mapsto x+\rho_{\mu_k}(T_k)$ but is closer to it than $T_k^{(n-1)}$ is. The standard KAM method consists mainly of two ideas: the solvability of each linearized equation under the Diophantine condition up to some loss of derivatives, and the convergence of the procedure due to the quadratic smallness of $R_k^{(n+1)}$ compared with $R^{(n)}_k$. The key observation of \cite{M} that will also be important here is that the commutativity enables us to solve for one $h^{(n)}$ simultaneously for all $k=1,\ldots,m$ assuming the SDC. 

{\bf Step 1: the cohomological equation and  commutativity. }

  Write $T_k(x)=x+\rho_k+R_k(x)$ for $k=1,\ldots,m$, where $\rho_k=\rho_{\mu_k}(T_k)$ is the rotation vector of $T_k$ with respect to the given measure $\mu_k$. For the sake of iteration later, we will also label $T_k=T_k^{(0)},R_k=R_k^{(0)}$ and $\mu_k=\mu_k^{(0)}$.  The vector $\rho_k$ will be kept constant independent of the super-script. 

 The conjugacy equation $h T_k=h+\rho_k$ gives
\[x+\rho_k+R^{(0)}_k(x)+ H(x+\rho_k+R_k^{(0)}(x))=x+H(x)+\rho_{k},\ \mathrm{where\ } h(x)=x+H(x),\]
whose linearization is \begin{equation}\label{EqLinearize}
H(x+\rho_k)-H(x)=-R^{(0)}_k(x).\end{equation}
Taking Fourier expansions $H(x)=\sum_{n\in \Z^N} \hat H_n e^{2\pi i \langle n,x\rangle }$ and $R^{(0)}_k(x)=\sum_{n\in \Z^N} \hat R^{(0)}_{k,n} e^{2\pi i \langle n,x\rangle }$, we get for $n\neq 0$ \begin{equation}\label{EqFourierKAM}\hat H_n(e^{2\pi i\langle\rho_k, n\rangle}-1)=-\hat R^{(0)}_{k,n}.\end{equation}
The commutativity condition $T_kT_j=T_jT_k$ gives
$$x+\rho_j+R^{(0)}_j(x)+\rho_k+R^{(0)}_k(x+\rho_j+R^{(0)}_j(x))=x+\rho_k+R^{(0)}_k(x)+\rho_j+R^{(0)}_j(x+\rho_k+R^{(0)}_k(x)),$$
whose linearization is\begin{equation}\label{EqCommLinear}
R^{(0)}_k(x+\rho_j)-R^{(0)}_k=R^{(0)}_j(x+\rho_k)-R^{(0)}_j. \end{equation}
In terms of Fourier coefficients,
\begin{equation}\label{EqCommLinearFourier}
\hat R^{(0)}_{k,n}(e^{i2\pi\langle \rho_j,n\rangle }-1)=\hat R^{(0)}_{j,n}(e^{i2\pi\langle \rho_k,n\rangle }-1),\quad n\in \Z^N\setminus\{0\}. \end{equation}

The key point is that the commutativity equation \eqref{EqCommLinearFourier} implies that the solution of the  cohomological equation \eqref{EqFourierKAM} for some $k$ also solves the same equation for all the other $j\neq k$. 

{\bf Step 2: the Fourier cut-off and solving the cohomological  equation.}

We next show how to solve the cohomological equation \eqref{EqFourierKAM}. By the simultaneous Diophantine condition, there exists $C>0$ such that for each $n\in \Z^{N}\setminus\{0\}$, there exists $k=k(n)\in\{1,\ldots,m\}$ such that $|\langle\rho_{k}, n\rangle|\geq \frac{C}{\|n\|^\tau}$ where $\rho_k=\rho_{\mu_k}(T_k)$ is the rotation vector.

We take a Fourier cutoff so that we can control higher order derivatives via  lower order derivatives. For $J^{(0)}\in \N$, we  solve for $\hat H_n$ with $|n|<J^{(0)}$ so that we have $\|\bar H^{(1)}\|_{C^{\ell+\tau}}\leq C(J^{(0)})^{\tau}\|\bar H^{(1)}\|_{C^{\ell}}$, where $\bar H^{(1)}(x) := \sum_{|n|\leq J^{(0)}} \hat H_n e^{2\pi i \langle n,x\rangle }$.  Solving \eqref{EqFourierKAM} for  $k=k(n)$, we get 
\[\hat H_n=-(e^{2\pi i\langle\rho_{k(n)}, n\rangle}-1)^{-1}\hat R_{k(n),n},\ \mathrm{for\ all\ }|n|\leq J^{(0)}.\] 
Denoting $h^{(1)}(x)=x+\bar H^{(1)}(x)$ , we get the estimate $\|\bar H^{(1)}\|_{C^{\ell}}\leq C\| R^{(0)}\|_{C^{\ell+\tau}}$ by the SDC.

From equation  \eqref{EqFourierKAM} and \eqref{EqCommLinearFourier}, we get that $\bar H^{(1)}$ solves the following equation, for all $k$:
\begin{equation}\label{Eq1stIteration}\bar H^{(1)}(x+\rho_k)-\bar H^{(1)}(x)=-\Pi_{J^{(0)}} R_k^{(0)}+\hat R_{k,0}^{(0)},\end{equation}
where $\Pi_{J^{(0)}}$ denotes the projection to Fourier modes with $|n|<{J^{(0)}}$, and the constant $\hat R_{k,0}^{(0)}$ is the $0$thn Fourier coefficient of  $R_k^{(0)}$.

{\bf Step 3: the iteration.}

Further introduce,  for $k=1,\ldots,m,$
\[T^{(1)}_k=h^{(1)}T^{(0)}_k(h^{(1)})^{-1}=x+\rho_k+R_k^{(1)},\quad \hbox{and }\mu_k^{(1)}=h^{(1)}_*\mu_k^{(0)},\]
where $R_k^{(1)}$ is defined as follows. 
Expanding the expression $h^{(1)}T^{(0)}_k=T^{(1)}_kh^{(1)}$, we get for all $k$:
$$x+ \rho_k+R_k^{(0)}+  \bar H^{(1)}(x+\rho_k+R_k^{(0)})=x+\bar H^{(1)}(x)+\rho_k+R_k^{(1)}\circ h^{(1)}.$$
Comparing with \eqref{Eq1stIteration}, we obtain for all $k$:
 \begin{equation} \label{EqRemainder}R_k^{(1)}=(\bar H^{(1)}(x+\rho_k+R_k^{(0)})-\bar H^{(1)}(x+\rho_k))\circ (h^{(1)})^{-1}+(R_k^{(0)}-\Pi_{J^{(0)}} R_k^{(0)})\circ (h^{(1)})^{-1}+\hat R_{k,0}^{(0)}.\end{equation}

Since the conjugation by $h^{(1)}$ does not change the rotation vector, we have
\[\rho_k=\rho_{\mu_k}(T_k)=\rho_{\mu_k^{(0)}}(T_k^{(0)})=\int_{\T^N} \tilde T^{(0)}x-x\,d\mu^{(0)}_k=\int_{\T^N} \tilde T^{(1)}x-x\,d\mu^{(1)}_k=\rho_{\mu_k^{(1)}}(T_k^{(1)});\]
from the equation $\rho_k=\int \tilde T_k^{(1)}\,d \mu^{(1)}_k$, we get $\int R_k^{(1)}\,d\mu_k^{(1)}=0$, so that the $j$th component $j=1,\ldots,N$ of $R_{k}^{(1)}$ vanishes at some point $x_j$. We see from \eqref{EqRemainder} that $\hat R_{k,0}^{(0)}$ is bounded by the $C^0$ norm of the first two terms on the RHS. The remainder $R_k^{(1)}$ consists of the quadratically small error discarded when deriving \eqref{EqLinearize}, as well as the higher Fourier modes with $|n|\geq J^{(0)}$ in $R_k^{(0)}$. We thus obtain from \eqref{EqRemainder} that   
\begin{equation}\label{EqLoss}\|R_k^{(1)}\|_{C^1}\leq C\|\bar H^{(1)}\|_{C^{2}}\|R_k^{(0)}\|_{C^1}+C\|(R_k^{(0)}-\Pi_{J^{(0)}} R_k^{(0)})\|_{C^{1}}.\end{equation}
We set $\eps^{(1)}=\max_k \|R_k^{(1)}\|_{C^1}$. 

The standard KAM method in \cite{M} then applies by repeating the above procedure for infinitely many steps, during which we shall let $J^{(n)}\to\infty$, $\eps^{(n)}\to 0$. The loss of derivative in \eqref{EqLoss} is handled in the standard way using the quadratic smallness on the RHS of \eqref{EqLoss}. Higher order derivative estimates are obtained by interpolation between the $C^1$ estimate in \eqref{EqLoss} and $C^{\ell+\tau}$ estimate due to the Fourier cut-off for some large $\ell$. In the limit, we get the conjugacy $h$ in the statement. Since a collection of translations satisfying the simultaneous Diophantine condition is uniquely ergodic on the torus, we get the common invariant measure $\mu$ must equal $h^{-1}_*\mathrm{Leb}$. The statement on the rotation vectors follows from
$\rho_{\mu_k}(T_k)=\rho_{h^*\mu_k}(\bar T_k)=\rho_{\mathrm{Leb}}(\bar T_k)=\rho_{\mu}(T_k),$
where $\bar T_k(x)=x+\rho_{\mu_k}(T_k).$

\end{proof}

Now we are ready to prove  local rigidity.
\begin{proof}[Proof of Theorem \ref{ThmLocal} (local rigidity).] 

Let $T_i=\al(g_i)$ and let \[\bar T_i=\bar\al(\bar A,\boldsymbol\rho)(g_i): x\mapsto x+\rho_{\mu_i}(T_i),\,\; i=1,\ldots,N.\]
Using the commutativity of  the $T_j$ and the simultaneous Diophantine condition,  we apply Proposition \ref{LmKAM} to construct $h$ 
that simultaneously conjugates $T_i$ to $\bar T_i$:
\[h\circ T_i = \bar T_i \circ h, \,\; i=1,\ldots,N.\]
 We then
compose with $h^{-1}$ on the right and $h$ on the left on both  sides of the group
relation $AT_i = (\prod_{j=1}^N  T_j^{b_{ji}})A$ 
to get
\[h Ah^{-1} \bar T_i=(\prod_{j=1}^N \bar T_j^{b_{ji}})hAh^{-1};\]
in other words, 
\begin{equation}\label{e=Beqn}
hAh^{-1}(x+\rho_j)=hAh^{-1}(x)+\sum_{j=1}^N b_{ji}\rho_j,\mathrm{\ mod\ }\Z^N,\quad i=1,\ldots,N.
\end{equation}
We introduce the function $F(x) = hAh^{-1}(x)-\bar Ax$ defined from $\T^n$ to $\T^n$.  We can choose a homotopy connecting $h$ to the identity under which $F$ is homotopic to $A-\bar A$.  Since $A$ is homotopic to $\bar A$, the image of $A-\bar A$ is homotopic to a point. Therefore we can treat $F$ as a continuous function from $\T^n$ to $\R^n$.
Combined with \eqref{EqCommute}, equation (\ref{e=Beqn}) then gives
$F(x + \rho_i) = F(x),\  \mathrm{mod}\ \Z^n,\; i=1,\ldots,N.$ Continuity of $F$ implies that $F(x + \rho_i) -  F(x)$ is a constant integer vector. We may choose $n\in \Z$ such that $n\rho_i$ mod $\Z^n$ is arbitrarily close to zero, and so  by the continuity of $F$, this constant integer vector has to be zero.  We thus obtain that $ F(x + \rho_i) = F(x)$.  The Diophantine property of the vectors $\rho_1,\ldots,\rho_N$ implies that the action generated by the $\bar T_i$ on $\T^N$  is ergodic with respect to  $\hbox{Leb}$.  Since the function $F(x)$ is invariant,  there is a  vector $F_0\in \R^N$ such that  $F(x)=hAh^{-1}(x)-\bar Ax=F_0$ almost everywhere (but in fact everywhere, since $F$ is continuous).  

 To kill this constant vector $F_0$, we introduce the translation $t(x) = x + (\mathrm{id}- \bar A)^{-1}F_0$. It is easy to check that $t$ conjugates $\bar A x + F_0$ and $\bar Ax$, i.e. $\bar At(x) + F _0= t(\bar A x)$. Composing the above
$h$ with $t$, we get the conjugacy in the statement of the theorem.\end{proof}

\section{The existence of the common conjugacy}\label{SConj}
In this section, we prove Theorem \ref{ThmTopN}.  We will use  the following result of Franks \cite{Fr}.
\begin{Thm}\label{ThmFranks} If $A : \T^N\to \T^N$ is an Anosov diffeomorphism, then $A$ is topologically
conjugate to a hyperbolic toral automorphism induced by $A_*\colon  H_1(\T^N,\Z)\to H_1(\T^N,\Z)$.\end{Thm}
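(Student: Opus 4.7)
The plan is to construct the conjugacy on the universal cover $\R^N$ and descend. Let $\tilde A \colon \R^N \to \R^N$ be a lift of $A$, and let $\bar A \in \mathrm{GL}(N,\Z)$ denote the matrix representing $A_* \colon H_1(\T^N,\Z)\to H_1(\T^N,\Z)$. Because $A$ and $\bar A$ descend to homotopic self-maps of $\T^N$, the difference $\phi := \tilde A - \bar A$ is $\Z^N$-periodic, hence bounded. Any candidate conjugacy $h$ homotopic to the identity lifts to $\tilde h = \mathrm{id} + u$ with $u\colon \R^N\to\R^N$ continuous, bounded and $\Z^N$-periodic, and the conjugacy equation $\bar A \circ \tilde h = \tilde h \circ \tilde A$ becomes the cohomological equation
\[ u \circ \tilde A - \bar A\, u \;=\; -\phi. \]
So the proof splits into three tasks: (i) show $\bar A$ is hyperbolic; (ii) solve the cohomological equation in $C^0_b(\R^N;\R^N)$; (iii) upgrade the resulting semi-conjugacy to a homeomorphism.

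Step (i) is the main obstacle, since everything else is essentially formal once hyperbolicity is in hand. I would use the Lefschetz fixed-point formula: for each $n \geq 1$, $L(A^n) = \prod_{i=1}^N (1 - \lambda_i^n)$, where $\lambda_1,\ldots,\lambda_N$ are the eigenvalues of $\bar A$. Since $A$ is Anosov, every fixed point of $A^n$ is nondegenerate, so $\#\mathrm{Fix}(A^n) = |L(A^n)|$; moreover $n^{-1}\log\#\mathrm{Fix}(A^n)$ converges to the topological entropy $h_{\mathrm{top}}(A)>0$. Any eigenvalue $\lambda_i$ on the unit circle that is a root of unity would force $L(A^n)=0$ for infinitely many $n$ (contradicting $\#\mathrm{Fix}(A^n)\geq 1$), while a $\lambda_i$ on the unit circle that is not a root of unity would make $\prod_j|1-\lambda_j^n|$ grow only polynomially, contradicting exponential growth. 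Hence $\bar A$ has no eigenvalues on the unit circle, so $\bar A$ is a hyperbolic toral automorphism. Step (ii) is then a direct iteration: decompose $\R^N = E^s\oplus E^u$ into the $\bar A$-invariant splitting and split $u=u^s+u^u$, $\phi=\phi^s+\phi^u$. Iterating the equation backwards on the stable side produces the absolutely convergent series
\[ u^s(x) \;=\; -\sum_{n=0}^\infty (\bar A|_{E^s})^{n}\, \phi^s\bigl(\tilde A^{-(n+1)}x\bigr), \]
convergent in $C^0$ because $\|\bar A|_{E^s}\|<1$ and $\phi^s$ is bounded; iterating forward on the unstable side with $\|(\bar A|_{E^u})^{-1}\|<1$ yields an analogous formula for $u^u$. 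The resulting $u$ is bounded, continuous, $\Z^N$-periodic (inherited from $\phi$), and solves the cohomological equation by construction. Hence $\tilde h = \mathrm{id}+u$ descends to a continuous $h\colon \T^N\to \T^N$ homotopic to the identity with $\bar A \circ h = h \circ A$.

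For step (iii) I would apply the same construction with the roles of $A$ and $\bar A$ swapped, using that $\bar A$ is itself an Anosov diffeomorphism of $\T^N$ by step (i), to produce $h'\colon \T^N\to \T^N$ homotopic to the identity with $A \circ h' = h' \circ \bar A$. The composition $h \circ h'$ then commutes with $\bar A$ and lifts to $\mathrm{id}+v$ for a bounded, continuous, $\Z^N$-periodic $v\colon\R^N\to\R^N$. The commutation relation forces $v(\bar A x) = \bar A\, v(x)$, hence $v(\bar A^n x) = \bar A^n v(x)$ for all $n\in\Z$. Hyperbolicity of $\bar A$ now forces $v \equiv 0$: a nonzero component of $v(x)$ in $E^u$ would blow up under forward iteration, and a nonzero component in $E^s$ would blow up under backward iteration, either contradicting $\|v\|_\infty<\infty$. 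Therefore $h\circ h' = \mathrm{id}$, and a symmetric argument gives $h'\circ h = \mathrm{id}$, so $h$ is a homeomorphism and the conjugacy $\bar A \circ h = h \circ A$ is the desired topological conjugacy.
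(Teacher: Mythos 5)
The paper does not prove this statement; it is the classical Franks--Manning theorem, quoted with a citation to \cite{Fr}, so the only comparison available is with the standard proof. Your skeleton (hyperbolicity of $A_*$ via periodic-point counting, a semiconjugacy from a cohomological equation, then an upgrade to a conjugacy) is indeed the classical route, and your Step (ii) is correct and complete: the series for $u^s,u^u$ solves $u\circ\tilde A-\bar Au=-\phi$ exactly and converges uniformly. Step (i), however, is gappy. The identity $\#\mathrm{Fix}(A^n)=|L(A^n)|$ requires all fixed points of $A^n$ to carry the \emph{same} index; nondegeneracy alone gives only $\#\mathrm{Fix}(A^n)\geq|L(A^n)|$, and with that inequality the root-of-unity case yields no contradiction ($L(A^n)=0$ is compatible with having fixed points). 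Constancy of the index amounts to orientability of $E^u$ and orientation-preservation of $DA|_{E^u}$, which must be arranged (e.g.\ by passing to a cover or a power). Moreover, the claim that a non-root-of-unity eigenvalue on the unit circle makes $\prod_j|1-\lambda_j^n|$ grow only polynomially is false as stated: the eigenvalues off the unit circle still force exponential growth. The genuine contradiction requires a two-sided bound $\#\mathrm{Fix}(A^n)=\Theta(e^{nh_{\mathrm{top}}(A)})$ together with the observation that $|1-\lambda^{n_k}|\to0$ along a subsequence depresses the growth rate of $|L(A^{n_k})|$ below that lower bound; this comparison is the delicate point and is absent.

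Step (iii) is the decisive gap. ``The same construction with the roles swapped'' does not produce $h'$: writing $\tilde h'=\mathrm{id}+v$, the equation $\tilde A\circ\tilde h'=\tilde h'\circ\bar A$ becomes $v(\bar Ax)-\bar Av(x)=\phi\bigl(x+v(x)\bigr)$, in which the unknown sits inside $\phi$. This is a nonlinear equation whose solvability for large $\phi$ is precisely what is at stake; it is the content of structural stability and needs $C^1$-smallness, not mere homotopy. A symptom of the problem is that your Steps (ii)--(iii) never use the Anosov property of $A$ itself, only that it is homotopic to a hyperbolic automorphism --- and at that level of generality the conclusion is false (a local perturbation of $\bar A$ creating additional fixed points is homotopic to, but not conjugate to, $\bar A$). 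The standard repair is to prove injectivity of $h$ directly: if $h(x)=h(y)$, then appropriate lifts satisfy $\|\tilde A^n\tilde x-\tilde A^n\tilde y\|\leq2\|u\|_\infty$ for all $n\in\Z$, and expansiveness of the lifted Anosov diffeomorphism on $\R^N$ at all scales (via the invariant foliations) forces $\tilde x=\tilde y$. Surjectivity then follows because $h$ is homotopic to the identity and hence has degree one, and a continuous bijection of a compact Hausdorff space onto itself is a homeomorphism. Your argument that $h\circ h'=\mathrm{id}$ once both semiconjugacies exist is fine, but the existence of $h'$ is exactly what cannot be obtained by your method.
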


This result has been generalized to the infranilmanifold case by Manning.
It is also known (\cite{KH}
Theorem 19.1.2) that the conjugacy $h$ is bi-H\"older; i.e. both $h$ and $h^{-1}$ are H\"older continuous. 

\begin{proof}[Proof  of Theorem \ref{ThmTopN}] 

Suppose we are given an action $\al\colon  \G_{\bar B}\to \mathrm{Diff}^r(\T^N)$ such that $\al(g_0)=A$ is Anosov and homotopic to $\bar A$, and $\al(g_i)= T_i$, $i=1,\ldots,N$ has  $c$-slow oscillation, where $c$ satisfies \eqref{EqcSlow}. By Theorem \ref{ThmFranks}, there is a homeomorphism $h$ such that $hAh^{-1} = \bar A$.  Let
$R_i:=h T_ih^{-1}$, for   $i = 1,\ldots, N$. We will show that $R_i(x) = x + \rho_i$ where $\rho_i$ is the rotation vector of $T_i$.  We lift $h$ to $\tilde h\colon  \R^N\to\R^N$ and decompose $$\tilde h(x) = x + g(x),\quad \tilde h^{-1}(x) = x + g_-(x),\quad \tilde T^p(x) = x+\Delta T^p(x),$$
for $p\in \Z^N$ where $g(x)$, $g_-(x)$ and $\Delta T^p(x)$ are $\Z^N$-periodic. 

For $p\in \Z^N$ and $t\in \T^N$, we have 
\begin{equation*}
\begin{aligned}
\tilde R^p(x)& = \tilde h \tilde T^p\tilde h^{-1}(x) \\
&= \tilde T^p\tilde h^{-1}(x) + g(\tilde T^p\tilde h^{-1}(x))\\
&=\tilde h^{-1}(x) + \Delta T^p(\tilde h^{-1}(x)) + g(\tilde T^p\tilde h^{-1}(x))\\
&= x +\Delta T^p(\tilde h^{-1}(x)) + g_-(x) + g(\tilde T^p\tilde h^{-1}(x)).
\end{aligned}
\end{equation*}
Since both $g_-$ and $g$ are uniformly bounded,  it follows that if $\{T^p\}$ has $c$-slow oscillation, then so does $\{R^p\}$.
From the group relation, we obtain for $p\in \Z^N$ and $n\in \Z$,
\begin{equation}\label{EqGrpRelation}
\bar A^n\tilde R^p(x) = \tilde R^{(\bar B^t)^np} \bar A^n(x)+Q_{p,n},\quad \bar A^n(\tilde R^p(x)- x) = (\tilde R^{(\bar B^t)^np} - \id)\bar A^n(x)+Q_{p,n},
\end{equation} 
where $Q_{p,n}$ is an integer vector in $\Z^N$ depending on $p,n$ and the choice of the lifts. 

For each $\tilde R^p,\ p \in \Z^N$, we take the Fourier expansion $\tilde R^p(x)- x =
\sum_{k\in \Z^N} \hat R_k(p)e^{2\pi i\langle k,x\rangle}$, where the
coefficient for $k\neq 0$ is  
$$\hat R_k(p) =\int_{\T^N} (\tilde R^p(x)-x)e^{-2\pi i\langle k,x\rangle}\, dx. $$
The condition that $\{R^p\}$ has $c$-slow oscillation implies that there exist $C, P$ such that when $\|p\| \geq P$,
we have $\|\hat R_k(p)\|\leq C\| p\|^c$, uniformly for all $k\neq 0$. From equation \eqref{EqGrpRelation} we obtain that for all $k\in \Z^N\setminus\{0\}$,
\begin{equation}\label{e=hatR} \hat R_k(p) = \bar A^{-n}\hat R_{(\bar A^t)^{-n}k}((\bar B^t)^np).
\end{equation}
We next consider the splitting of $\R^N$ into $\bar\cW^u(0)\oplus \bar \cW^s(0)$, the direct sum decomposition into unstable
and stable eigenspaces of $\bar A$.  Each $\hat R_k(p)$ is a vector, so we write $\hat R_k(p)=(\hat R_k(p))^u+(\hat R_k(p))^s$ where $(\hat R_k(p))^{u,s}\in \bar \cW^{u,s}(0)$. Applying $\bar A^{n}$ we get $\bar A^{n}\hat R_k(p)=\bar A^{n}(\hat R_k(p))^u+\bar A^{n}(\hat R_k(p))^s$ with the estimate $\|\bar A^{n}(\hat R_k(p))^u\|\geq |\lambda_1^u|^n\|(\hat R_k(p))^u\|. $ Doing this decomposition to the equation \eqref{e=hatR}, we obtain the following estimate for $\|(\bar B^t)^np\| \geq P$:
$$\|(\hat R_k(p))^u\|\leq \frac{1}{|\lambda_1^u|^{n}}
\|\hat R_{(\bar A^t)^{-n}k}((\bar B^t)^{n}p)\|\leq C\frac{\|(\bar B^t)^np\|^c}{|\lambda_1^u|^{n}}\leq C\|p\|^c\left(\frac{|\mu^u_{k'}|^c}{|\lambda_1^u|}\right)^n\to 0$$
as $n\to\infty$, if $c< \frac{\ln|\lambda_1^u|}{\ln|\mu_{k'}^u|}$.
Similarly, letting $n\to-\infty$ and projecting to the $\bar \cW^s(0)$ in the above argument, we get that the
projection of $\hat R_k(p)$ to $\bar \cW^s(0)$ is also $0$. Therefore $\hat R_k(p) = 0$ for all $k\neq 0$.
This implies that each $R^p(x)- x,\ p \in \Z^N$, is a constant. Since a conjugacy does not change the
rotation vector, we have $R_i(x) = x + \rho_i$, where $\rho_i$ the rotation vector of $T_i$, $i=1,\ldots,N$. Next we have $R^p(x) = x + \brho p$,\ $p\in \Z^N$.
This completes the proof. 
\end{proof}

\section{Preliminaries: elliptic and hyperbolic dynamics}\label{SEllipHyp}

In this section, we explain and develop techniques from elliptic  and hyperbolic dynamics that we will use to prove our main results. We first introduce the framework of Herman-Yoccoz-Katznelson-Ornstein for obtaining regularity
of the conjugacy of circle maps and generalize it to abelian group actions on $\T^N$. Next, we
state facts about Anosov diffeomorphisms, including the invariant foliation structure and its regularity properties.
\subsection{Elliptic dynamics: the framework of Herman-Yoccoz-Katznelson-Ornstein} \label{SElliptic}
In this section, we generalize to abelian group actions the framework of Herman-Yoccoz theory for circle maps after Katznelson-Ornstein.
\begin{Def}\label{DefCkHk}  Let $\cF$ be  a continuous foliation of $\T^N$ by one-dimensional uniformly $C^1$ leaves $\cF(x),\ x\in\T^N$, and let $k\geq 1$.
\begin{enumerate}
\item  We denote by
$\cH^k = \cH^k(\T^N)$ the group of $C^k$ diffeomorphisms on $\T^N$, $k\in \N$. 
\item We denote by $\cH^k_\cF$ the subgroup of diffeomorphisms in $\cH^k$ preserving the foliation $\cF$; i.e., $f \cF(x)=\cF(f(x)),\ \forall\ f\in \cH^k_\cF$ and $\forall\ x\in \T^N$.
\item The $C^k$ norm $\|\cdot \|_{C^k(\cF)}$ on $C^k(\T^N, \R^{N})$  along the foliation $\cF$ is defined as follows.  For $\varphi\in C^k(\T^N, \R^{N})$, let 
$$\|\varphi\|_{C^k(\cF)}:=\sum_{i=0}^k\sup_x\|(D_x^i \left(\varphi\vert_{\cF}\right) \|,$$ 
where the norm inside the summand on the right hand side is the operator norm induced by the Euclidean metric restricted to the leaves of $\cF$.
\end{enumerate}
\end{Def}
\subsubsection{Generalization of the framework of Herman after Katznelson-Ornstein}
The following statement about circle maps was known to Herman \cite{H}: 

{\it Suppose that $f\in \cH^k(\T^1)$ takes the form $f=h^{-1}(h(x)+\rho)$, where $h\colon  \T^1\to\T^1$ is a homeomorphism and $\rho\notin\mathbb Q$. Then $h \in \cH^k$ if and only if
the iterates $\{ f^j\}_{j\in\Z}$ are uniformly bounded in $\cH^k$.}

 Following   Katznelson-Ornstein \cite{KO}, we generalize this statement to  abelian subgroup actions. 
 
 \begin{Def}
A collection of $m$ vectors $\rho_1,\ldots,\rho_m\in \T^N$ is said to \emph{rationally generate} $\T^N$ if 
$\{\sum_{i=1}^m p_i\rho_i,\ p_i\in \Z,\ i=1,\ldots,m\}$ is dense on $\T^N$. 
 \end{Def}
\begin{Prop}\label{PropKO} Suppose that for some $k>0$, the maps $T_i\in\cH^k(\T^N),\ i=1,\ldots,m$, commute. Suppose also that there exists a homeomorphism $h: \ \T^N\to \T^N$ such that $T_i = h^{-1}\bar T_ih$, where
$\bar T_i(x) = x + \rho_i, \mathrm{mod\ } \Z^N,\ i = 1, 2,\ldots,m$, and $\rho_1,\ldots,\rho_m$ rationally generate $\T^N$.  Fix a lift  $\tilde h\colon \T^N\to \R^N$  of $h$.

Then the following equality holds for all $x$:
\begin{equation}\label{EqBirkhoff}\tilde h(x)= \mathrm{const}. + \lim_{n\to\infty}
\frac{1}{(2n + 1)^{N}}
\sum_{\|p\|_{\ell^\infty}\leq n}\left(\tilde{T}^p(x)-\brho p\right).
\end{equation}
where $p = (p_1,\ldots, p_m)\in \Z^m$, $\brho = (\rho_1,\ldots,\rho_m)\in \T^{N\times m}$ and $\tilde T^p$ is the lift of $T^p$ satisfying $\tilde T^p(0)=\tilde h^{-1}(\tilde h(0)+\brho p)$.
\end{Prop}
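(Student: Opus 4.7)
The plan is to derive the formula by unwinding the conjugacy on the universal cover, reducing the claim to a uniform equidistribution statement for a translation action on $\T^N$, and then applying unique ergodicity.

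First I would unfold the conjugacy relation at the level of $\R^N$. From $T_i = h^{-1}\bar T_i h$ and the commutativity, we have $T^p = h^{-1}\bar T^p h$ for every $p\in\Z^m$, where $\bar T^p(x)=x+\brho p$ on $\T^N$. With the specific lift $\tilde T^p$ normalized by $\tilde T^p(0)=\tilde h^{-1}(\tilde h(0)+\brho p)$ and the canonical lift $\tilde{\bar T}^p(y)=y+\brho p$ of $\bar T^p$, the identity $\tilde h\circ \tilde T^p=\tilde{\bar T}^p\circ \tilde h$ holds exactly on $\R^N$ (no integer ambiguity). Writing $\tilde h^{-1}(y)=y+g_-(y)$ with $g_-$ a continuous $\Z^N$-periodic function, this gives
\begin{equation*}
\tilde T^p(x)-\brho p \;=\; \tilde h^{-1}(\tilde h(x)+\brho p)-\brho p \;=\; \tilde h(x)+g_-\!\bigl(\tilde h(x)+\brho p\bigr).
\end{equation*}
Thus the Cesàro-type sum in \eqref{EqBirkhoff} equals
\begin{equation*}
\tilde h(x)\;+\;\frac{1}{(2n+1)^{m}}\sum_{\|p\|_{\ell^\infty}\leq n}g_-\!\bigl(\tilde h(x)+\brho p\bigr),
\end{equation*}
and the proposition reduces to showing that the second term converges (uniformly in $x$) to a constant, independent of $x$.

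Second, I would identify this second term as a Birkhoff average for the $\Z^m$-action on $\T^N$ generated by the translations $z\mapsto z+\rho_i$, $i=1,\dots,m$, applied to the continuous function $g_-$. The hypothesis that $\rho_1,\ldots,\rho_m$ rationally generate $\T^N$ means exactly that the orbit closure of $0$ under this $\Z^m$-action is all of $\T^N$; since the action is by translations on a compact abelian group, every orbit closure is a coset of this common closed subgroup, so the action is minimal on $\T^N$. A standard fact — minimality of a translation action on a compact abelian group implies unique ergodicity with respect to Haar measure (Weyl equidistribution, or equivalently Fourier-analytic argument: for every nontrivial character $\chi_k(z)=e^{2\pi i\langle k,z\rangle}$ with $k\in\Z^N\setminus\{0\}$, minimality forces $\langle k,\rho_i\rangle\notin\Z$ for some $i$, and hence the averages $\frac{1}{(2n+1)^m}\sum_{\|p\|_\infty\leq n}\chi_k(\brho p)$ factor as a product of one-dimensional geometric sums that tend to $0$).

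Third, because the cubes $\{p\in\Z^m:\|p\|_{\ell^\infty}\leq n\}$ form a Følner sequence for $\Z^m$, unique ergodicity upgrades to uniform convergence of the Birkhoff averages along this sequence for every continuous observable (one first checks it on characters by the computation above, then on trigonometric polynomials by linearity, and then on arbitrary continuous functions by Stone–Weierstrass approximation). Applying this to the continuous function $g_-$ yields
\begin{equation*}
\frac{1}{(2n+1)^{m}}\sum_{\|p\|_{\ell^\infty}\leq n}g_-\!\bigl(z+\brho p\bigr)\;\xrightarrow[n\to\infty]{}\;\int_{\T^N}g_-(w)\,dw,
\end{equation*}
uniformly in $z\in\T^N$. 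Plugging in $z=\tilde h(x)$ completes the proof, with the constant in \eqref{EqBirkhoff} equal to $\int_{\T^N}g_-$.

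I expect no serious obstacle: the one point worth being careful about is that the translation identity on $\R^N$ holds without an integer correction for the prescribed lift (which is exactly why the normalization $\tilde T^p(0)=\tilde h^{-1}(\tilde h(0)+\brho p)$ is built into the statement), and that minimality of the translation action on $\T^N$ upgrades to \emph{uniform} Birkhoff convergence along cubes — both are standard but need to be invoked explicitly.
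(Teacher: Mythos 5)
Your proposal is correct, and the core computation is identical to the paper's: you unfold the conjugacy on the universal cover with the prescribed lift to get $\tilde T^p(x)-\brho p-\tilde h(x)=(\tilde h^{-1}-\mathrm{id})(\tilde h(x)+\brho p)$ and then average over the cubes $\|p\|_{\ell^\infty}\le n$. The one genuine difference is how the convergence of that average is justified. The paper invokes ergodicity of the translation action together with a pointwise ergodic theorem for amenable group actions (Lindenstrauss), which a priori yields only almost-everywhere convergence; you instead observe that rational generation gives minimality, hence unique ergodicity, and then obtain \emph{uniform} convergence of the averages of the continuous function $g_-=\tilde h^{-1}-\mathrm{id}$ along the F{\o}lner cubes via Weyl sums and Stone--Weierstrass. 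Your route is arguably the more appropriate one for the statement as written, since the conclusion is asserted for every $x$ (and indeed the later applications, e.g.\ Corollary \ref{CorKO}, use pointwise convergence at every point); the paper's appeal to the a.e.\ ergodic theorem would strictly speaking need the same unique-ergodicity upgrade you supply. Both arguments are standard and short; yours makes explicit the step that the paper leaves implicit.
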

\begin{proof}[Proof of Proposition \ref{PropKO}]
 From $T^p = h^{-1}\bar T^{p}h$, we get $ T^p =  h^{-1}( h(x) + \brho p$). We next fix the lift  $\tilde T^p$ of $T^p$ that satisfies $\tilde T^p(0)=\tilde h^{-1}(\tilde h(0)+\brho p)$ to obtain
$$\tilde T^p(x)-\brho p-\tilde h(x) = (\tilde h^{-1}-\mathrm{id})\circ (\tilde h(x) + \brho p).$$
Averaging over all $p\in\Z^m$ with $\|p\|_{\ell^\infty}\leq n$, and letting $n\to\infty$, we get
$$\tilde h(x)= -\int_{\T^N}(\tilde h^{-1}(x)- x) dx + \lim_{n\to\infty}\frac{1}{
(2n + 1)^{N}}
\sum_{\|p\|_{\ell^\infty}\leq n}\left(\tilde{T}^p(x)-\brho p\right),$$
where to get the integral, we use the fact that the affine action of $\Z^m$ via the rigid translations
$\bar T_i,\ i=1,\ldots,m$ is ergodic with respect to Lebesgue, combined with a version of the Birkhoff ergodic theorem for abelian group
actions (c.f. Theorem 1.1. of \cite{L}). 
\end{proof}
\begin{Cor} \label{CorKO} 
Let the abelian group $\mathcal A=\{T^p\colon  p\in \Z^m\}\ (<\cH^k(\T^N))$ and the conjugacy $h$ be as in Proposition \ref{PropKO}.
\begin{enumerate}
\item Let $\bar\cF = \{\bar\cF(x),\ x\in \T^N\}$ be an affine foliation of $\T^N$ by parallel lines. Let $\cF$ be the (topological) foliation of $\T^N$ whose leaves are $\cF(x) = h^{-1}(\bar\cF(h(x)))$, $x\in \T^N$. 
\item Assume the leaves $\cF(x)$ of the foliation $\cF$ are uniformly $C^1$. Note that this implies that $\cA< \cH^k_\cF(\T^N)$. 
\end{enumerate}
 If the set $\{T^p(x)-x\colon  p\in \Z^m\} \subset C^k(\T^N,\R^N)$ is
precompact in the $\|\cdot\|_{C^k(\cF)}$ norm, then 
$h$ is uniformly $C^k$ along the leaves of $\cF$. Moreover, in the case of $k=1$, we also have that $h^{-1}$ is uniformly $C^1$ along the leaves of $\bar \cF$.

\end{Cor}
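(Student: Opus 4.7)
The plan is to differentiate, term by term, the ergodic-average identity
\[
\tilde h(x) = \mathrm{const.} + \lim_{n\to\infty}\frac{1}{(2n+1)^N}\sum_{\|p\|_{\ell^\infty}\le n}\bigl(\tilde T^p(x)-\brho p\bigr)
\]
supplied by Proposition \ref{PropKO}, and use the precompactness hypothesis to force $\|\cdot\|_{C^k(\cF)}$-convergence of the resulting averages. For the final sentence concerning $h^{-1}$, the idea is to combine a cocycle obtained by differentiating the conjugacy relation with minimality of the model translation action.

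Setting $A_n(x):=\frac{1}{(2n+1)^N}\sum_{\|p\|_{\ell^\infty}\le n}(\tilde T^p(x)-\brho p)$, symmetry gives $\sum_{\|p\|_{\ell^\infty}\le n}\brho p=0$, so $A_n=\mathrm{id}+B_n$ with $B_n:=\frac{1}{(2n+1)^N}\sum_p(T^p-\mathrm{id})$. Each $B_n$ lies in the convex hull of the precompact family $\{T^p-\mathrm{id}\}\subset (C^k(\T^N,\R^N),\|\cdot\|_{C^k(\cF)})$; since the closed convex hull of a totally bounded subset of a normed space is again totally bounded, $\{A_n\}$ is $\|\cdot\|_{C^k(\cF)}$-precompact. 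Because the $i=0$ term of $\|\cdot\|_{C^k(\cF)}$ is the sup-norm, this precompactness combined with the pointwise limit $\tilde h-\mathrm{const.}$ from Proposition \ref{PropKO} forces $A_n\to\tilde h-\mathrm{const.}$ in $\|\cdot\|_{C^k(\cF)}$: every subsequence has a further convergent subsequence whose limit must equal the pointwise limit, so the full sequence converges. This exhibits $\tilde h$ with bounded $\|\cdot\|_{C^k(\cF)}$ norm, i.e.\ $h$ is uniformly $C^k$ along the leaves of $\cF$.

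For the moreover statement, with $k=1$, I would differentiate the intertwining relation $h\circ T^p=\bar T^p\circ h$ in the direction of $T\cF$. Since $\bar T^p$ is a translation, one obtains the cocycle identity
\[
D_\cF h(T^p x)\circ D_\cF T^p(x)=D_\cF h(x),\qquad x\in\T^N,\ p\in\Z^m,
\]
where each $D_\cF T^p(x)$ is a linear isomorphism of the one-dimensional line $T_x\cF(x)$ onto $T_{T^px}\cF(T^px)$ because $T^p$ is a diffeomorphism preserving $\cF$. Hence the closed set $Z:=\{x:D_\cF h(x)=0\}$ is $T^p$-invariant for every $p$. The $T^p$-action is topologically conjugate via $h$ to the $\bar T^p$-action, which is minimal on $\T^N$ by the hypothesis that $\rho_1,\ldots,\rho_m$ rationally generate $\T^N$; hence $Z$ is either empty or all of $\T^N$. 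The latter case would force $h$ to be constant on each leaf of $\cF$, contradicting that $h$ restricts to a homeomorphism $\cF(x)\to\bar\cF(h(x))$. Thus $D_\cF h$ is nonvanishing, and continuity on the compact torus upgrades this to a uniform lower bound, whence the leafwise $C^1$ inverse function theorem yields uniform $C^1$ regularity of $h^{-1}$ along $\bar\cF$.

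The main obstacle is the soft functional-analytic upgrade from $\|\cdot\|_{C^k(\cF)}$-precompactness plus pointwise convergence to genuine $\|\cdot\|_{C^k(\cF)}$-convergence; some care is needed because this norm is strictly weaker than the usual $C^k$ norm and its completion contains functions that are only leafwise smooth---but that weaker regularity is exactly what the corollary asserts. A subsidiary care point is interpreting $D_\cF^i$ for $i\ge 2$ on a foliation whose leaves are only uniformly $C^1$: one reads $D_\cF^i\varphi$ as the ambient $i$th derivative of $\varphi\in C^k(\T^N,\R^N)$ evaluated on leaf tangent vectors, which is well-defined regardless of leaf regularity and is all that the averaging argument requires.
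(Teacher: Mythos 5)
Your argument for the leafwise $C^k$ regularity of $h$ is essentially the one in the paper: both routes push the Birkhoff averages from Proposition \ref{PropKO} through the (totally bounded) convex hull of the precompact family $\{T^p-\mathrm{id}\}$ and then use uniqueness of subsequential limits against the known pointwise limit to get convergence in $\|\cdot\|_{C^k(\cF)}$. Where you genuinely diverge is in the ``moreover'' part, namely in showing that $\|D_xh|_{\cF}\|$ is bounded away from zero so that the leafwise inverse function theorem applies. The paper argues quantitatively: supposing $\|D_{x_0}h|_{\cF}\|<\eps$ at some point, it combines the cocycle relation $D_{T^p(x)}h|_{\cF}\cdot D_xT^p|_{\cF}=D_xh|_{\cF}$ with a finite cover $\bigcup_i T^{p_i}(B(x_0))=\T^N$ (available by density of orbits) to propagate the bound $\|D_xh|_{\cF}\|<C\eps$ to all of $\T^N$, and then derives a contradiction by choosing $x,x'$ on a common leaf with $\|\tilde h(x)-\tilde h(x')\|\geq 1$ and integrating $\|Dh|_{\cF}\|$ along a leaf curve of finite length, which forces $\eps$ to be bounded below. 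Your argument is softer: the same cocycle identity makes the zero set of $D_{\cF}h$ closed and invariant, minimality of the conjugated translation action (which follows from the rational generation hypothesis) makes it empty or all of $\T^N$, and the fact that $h$ maps each leaf homeomorphically onto an affine line rules out the latter; compactness of $\T^N$ then yields the uniform lower bound. Both versions rest on the same two ingredients -- the leafwise cocycle identity and minimality -- but yours dispenses with the covering/propagation step and the length estimate, at the price of invoking the global (including transverse) continuity of $D_{\cF}h$ obtained in the first part; since that continuity is indeed established there, your argument is complete and arguably cleaner.
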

The proof of Corollary \ref{CorKO} is given in Section \ref{SSSCor}.

Given a continuous increasing function $\psi(x)\colon  \R_{\geq0}\to\R_{\geq0}$ with $\psi(0)= 0$, we say that a function $f: (X,d)\to (X',d')$ between two metric spaces has {\it modulus of continuity $\psi$} at a point $x_0\in X$, if there exists a constant $C>0$ such that $$d'(f(x_0),f(y))\leq C\psi(d(x_0,y)),$$
for any $y \in X$ sufficiently close to $x_0$. 
\begin{Prop} \label{PropMoC} Let the abelian group $\cA$, the conjugacy $h$, and the foliations $\cF$, $\bar\cF$ 
be as in Corollary \ref{CorKO}. 
 Assume that
$\{T^p(x)-x\colon  p\in \Z^m\} \subset C^1(\T^N,\R^N)$ is
uniformly bounded in the $\|\cdot\|_{C^1(\cF)}$ norm 
and that the mapping $$\brho p\ (\mathrm{mod\ } \Z^N) \mapsto \|T^p(x)-x\|_{C^1(\cF)}$$ has modulus of continuity $\psi$ at $\brho p=0$. Then both $\|D\left(h\vert_{\cF}\right)\|$ and $\|D\left(h^{-1}\vert_{\bar\cF}\right)\|$ have modulus of continuity $\psi$ with respect to the Euclidean metric. 
\end{Prop}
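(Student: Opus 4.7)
The plan is to apply Corollary~\ref{CorKO} to extract leafwise $C^1$ regularity for $h$ and $h^{-1}$, then use the differentiated conjugacy relation together with density of the orbits $\{\brho p \bmod \Z^N\}$ to upgrade this to the quantitative modulus-of-continuity bound.

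First I would verify the precompactness hypothesis of Corollary~\ref{CorKO} in the case $k=1$. Given any sequence $\{p_k\}\subset\Z^m$, compactness of $\T^N$ lets me pass to a subsequence along which $\brho p_k$ is Cauchy modulo $\Z^N$. Then $\brho(p_k-p_l)\to 0 \bmod \Z^N$ as $k,l\to\infty$, so the modulus-of-continuity hypothesis at $\brho p = 0$ yields $\|T^{p_k-p_l}-\mathrm{id}\|_{C^1(\cF)}\to 0$. Writing $T^{p_k}=T^{p_k-p_l}\circ T^{p_l}$, the chain rule combined with the uniform $C^1(\cF)$ bound on the family $\{T^p\}$ gives
\[
\|T^{p_k}-T^{p_l}\|_{C^1(\cF)}\le \|T^{p_k-p_l}-\mathrm{id}\|_{C^1(\cF)}\cdot\bigl(1+\|T^{p_l}\|_{C^1(\cF)}\bigr)\longrightarrow 0.
\]
Hence $\{T^p-\mathrm{id}\}_{p\in\Z^m}$ is precompact in $C^1(\cF)$, so Corollary~\ref{CorKO} applies: $h$ is uniformly $C^1$ along $\cF$, $h^{-1}$ is uniformly $C^1$ along $\bar\cF$, and $|DT^p|_\cF|$ is uniformly bounded above and away from zero.

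Second I would differentiate the conjugacy relation $h\circ T^p=\bar T^p\circ h$ along the leaves of $\cF$. Because $\bar T^p$ is a translation, $D\bar T^p=\mathrm{Id}$; using that $h$ sends $\cF$ to $\bar\cF$ and $T^p$ preserves $\cF$, the chain rule produces the scalar identity
\[
Dh|_\cF(T^p x)\cdot DT^p|_\cF(x)=Dh|_\cF(x),
\]
so that, for $\brho p$ near $0\bmod\Z^N$,
\[
\bigl|Dh|_\cF(T^p x)-Dh|_\cF(x)\bigr|\le C\,\bigl|1-DT^p|_\cF(x)\bigr|\le C'\,\psi\bigl(\|\brho p\bmod\Z^N\|\bigr),
\]
using the hypothesis and the uniform bounds from the first step. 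Given $x,y\in\T^N$ close, set $z=h(y)-h(x)\bmod\Z^N$, so $\|z\|\le L\|y-x\|$ with $L=\sup\|Dh|_\cF\|$. Since the columns of $\brho$ rationally generate $\T^N$, I pick a sequence $p_k$ with $\brho p_k\to z\bmod\Z^N$; then $h(T^{p_k}x)=h(x)+\brho p_k\to h(y)$, so $T^{p_k}x\to y$ by continuity of $h^{-1}$. Applying the estimate above to each $p_k$ and passing to the limit using continuity of $Dh|_\cF$ and of $\psi$ at $0$:
\[
\bigl|Dh|_\cF(y)-Dh|_\cF(x)\bigr|\le C'\,\psi\bigl(L\|y-x\|\bigr),
\]
which is the claimed modulus of continuity (absorbing $L$ into the constant/rescaling $\psi$). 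The symmetric argument, starting from $h^{-1}\circ\bar T^p=T^p\circ h^{-1}$, gives the analogous bound for $Dh^{-1}|_{\bar\cF}$.

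The main obstacle lies in the first step: extracting precompactness of the full orbit $\{T^p\}$ in $C^1(\cF)$ from the single-point modulus-of-continuity hypothesis at $\brho p=0$ via the abelian group law. A secondary subtlety is that the passage to the limit in the third step requires continuity of $Dh|_\cF$ as a function on $\T^N$ (not merely along a single leaf); this rests on the fact that $\cF$ has continuously varying, uniformly $C^1$ leaves together with the uniform leafwise $C^1$ regularity of $h$ from the first step.
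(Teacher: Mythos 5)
Your first step (precompactness of $\{T^p-\mathrm{id}\}$ in $C^1(\cF)$ from the single-point modulus hypothesis via the group law $T^{p_k}=T^{p_k-p_l}\circ T^{p_l}$) is correct and is essentially the paper's argument, which packages the same idea as translation invariance of a metric $d^1_\cF$ equivalent to the $C^1(\cF)$ distance. The identity $D_{T^px}h|_\cF\cdot D_xT^p|_\cF=D_xh|_\cF$ and the resulting bound $|D_{T^px}h|_\cF-D_xh|_\cF|\leq C\,\psi(\|\brho p\|)$ are also fine.

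The gap is in the passage from pairs $(x,T^px)$ to arbitrary nearby pairs $(x,y)$ for $Dh|_\cF$. You set $z=h(y)-h(x)$ and assert $\|z\|\leq L\|y-x\|$ with $L=\sup\|Dh|_\cF\|$. That Lipschitz bound does not follow from what you have established: Corollary \ref{CorKO} gives differentiability of $h$ only \emph{along the leaves of} $\cF$, while for $x$ and $y$ on different leaves $h$ is merely a homeomorphism (bi-H\"older in all the applications), so $\|h(y)-h(x)\|$ can be of order $\|y-x\|^{\eta}\gg\|y-x\|$. Consequently your limit only yields $|Dh|_\cF(y)-Dh|_\cF(x)|\leq C\,\psi(\|h(y)-h(x)\|)$, i.e.\ modulus $\psi$ composed with the modulus of continuity of $h$, not $\psi$ of the Euclidean distance $\|y-x\|$. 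The clean half of the statement is the one you dismiss as ``the symmetric argument'': for $h^{-1}$ the comparison points $h(x)$ and $h(x)+\brho p$ are \emph{exact} translates, at Euclidean distance exactly $\|\brho p\|$, so no Lipschitz control of the conjugacy is needed. This is precisely the order in which the paper proceeds: it differentiates $T^p(x)=h^{-1}(h(x)+\brho p)$ along $\cF$ to get
$$D_xT^p|_\cF-\mathrm{id}=\bigl(D_{h(x)+\brho p}h^{-1}|_{\bar\cF}-D_{h(x)}h^{-1}|_{\bar\cF}\bigr)\cdot D_xh|_\cF,$$
deduces the modulus $\psi$ for $Dh^{-1}|_{\bar\cF}$ using the uniform lower bound on $\|D_xh|_\cF\|$ from Corollary \ref{CorKO} and density of $\{\brho p\}$, and only then recovers the statement for $Dh|_\cF$ from $Dh|_\cF\cdot\bigl(Dh^{-1}|_{\bar\cF}\circ h\bigr)=\mathrm{id}$. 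You should restructure your argument the same way; as written, the $Dh|_\cF$ half rests on a false inequality.
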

The proof of Proposition \ref{PropMoC} is given in Section \ref{SSSMoC}. 
\subsubsection{Proof of Corollary \ref{CorKO}}\label{SSSCor}
We only prove the case of $k=1$. A similar argument gives the continuity of higher derivatives.


We denote the $n$th Birkhoff average on the right hand side of of \eqref{EqBirkhoff} by $\tilde S_n$, so \eqref{EqBirkhoff} can be rephrased as $\tilde h=\lim_{n\to\infty} \tilde S_n$ up to an additive constant. Since $\cA$ is assumed to be pre-compact in $\cH_\cF^1$ and the pointwise convergence is given by \eqref{EqBirkhoff}, we have that $\{D\tilde S_n|_\cF: n\geq1\}$ is precompact in the $C^0$ operator norm, by Theorem 5.35 of \cite{AB}, which states that the convex hull of compact sets is compact in a completely
metrizable locally convex space. This shows that $h$ is differentiable along $\cF$ and any subsequential limit of $\{D\tilde S_n|_\cF: n\geq1\}$ is $Dh|_\cF$. 


We have proved that $Dh|_\cF$ is continuous. To show that $Dh^{-1}|_{\bar\cF}$ is also continuous, by the implicit function theorem, it is enough to show that $\|D_xh|_{\cF}\|$ is bounded away from zero.  Fix $\eps > 0$ such that $\|D_{x_0}h|_\cF\|<\eps$ for some  $x_0$. Then the same inequality holds in a small neighborhood $B(x_0)$ of $x_0$. By the ergodicity of $T^p$, there exist finitely many $p_i$, $i=1,\ldots,n$ such that $\cup_{i=1}^nT^{p_i}(B(x_0))=\T^N$.  This, combined with the equation
\[D_{T^p(x)}h |_{\cF}\cdot D_xT^p|_\cF=D_xh|_\cF,\]
implies that there exists a constant $C$ independent of $p$ such that 
\begin{equation}\label{e=Dhnormbound2}
\|D_xh|_\cF\|<C\eps, 
\end{equation}
for all $x\in \T^N$

Consider a leaf $\cF(x)$ and $x'\in \cF(x)$. We lift the leaf to the universal cover and consider the image of the segment between $x$ and $x'$ under $\tilde h$, i.e. the line segment between $\tilde h(x)$ and $\tilde h(x')$. Since $\tilde h(x)=x+g(x)$ where $g(x)$ is $\Z^N$ periodic, choosing $x$ and $x'$ far apart on $\cF(x)$ we can make $\|\tilde h(x')-\tilde h(x)\|\geq 1.$ 
Fix such a choice of $x,x'$.  There is a $C^1$ curve $\gamma_{x,x'}\subset \cF(x)$ connecting $x$ to $x'$ with length bounded by a constant $C_{x,x'}$. The image $h(\gamma_{x,x'})$ is a $C^1$ curve connecting $h(x)$ and $h(x')$ with length larger than 1.  
Inequality (\ref{e=Dhnormbound2}) implies that
\[1\leq \|\tilde h(x)-\tilde h(x')\|\leq \int_{\gamma_{x,x'}}\|Dh|_\cF\|\leq \eps C_{x,x'}.\]
This implies that $\epsilon > C_{x,x'}^{-1}$, and so $\|Dh|_\cF\|$ is uniformly bounded below, completing the proof.
\qed
\subsubsection{Proof of Proposition \ref{PropMoC}}\label{SSSMoC}
We first introduce a translation-invariant distance $d_\cF^1$ on $\mathcal A$ that is equivalent to the $C^1$ norm as follows (c.f. \cite{K}). Let $B_1$ be the set of $\phi\in C(\T^n,\R)$ with $\|\phi\|_{C^1(\cF)}=1$.  For $f,g\in \cH^k_\cF$, we introduce 
$d^1_\cF(f,g):=d^1_\cF(f g^{-1},\mathrm{id})$ and $$d^1_\cF(f):=\log(\max\{\Phi(f),\Phi(f^{-1}\})+\sup_x\|f(x)-x\|,$$
where $\Phi(f):=\sup_{\phi\in B_1}\|\phi\circ f\|_{C^1(\cF)}.$ To verify the triangle inequality, we note that \begin{equation}
\begin{aligned}\Phi(fg)&=\sup_{\phi\in B_1}\|\phi\circ (fg)\|_{C^1(\cF)}=\sup_{\phi\in B_1}\left\|\frac{1}{\|\phi\circ f\|_{C^1(\cF)}}\phi\circ (fg)\right\|_{C^1(\cF)}\cdot \|\phi\circ f\|_{C^1(\cF)}\\
&\leq \sup_{\psi\in B_1} \|\psi\circ g\|_{C^1(\cF)}\sup_{\phi\in B_1} \|\phi\circ f\|_{C^1(\cF)}=\Phi(f)\Phi(g).\end{aligned}\end{equation}
The chain rule implies that  $d_\cF^1$ is equivalent to the   $C^1$  distance $\sup_x \|f(x)-x\|_{C^1(\cF)}$.

By the assumption on the  modulus of continuity $\psi$, the map from $\brho p\in \T^N$ to $C^1(\cF)$ via $\brho p\mapsto T^p$ is continuous in the $C^1$ norm at the point $\brho p=0$. By the translation invariance of the $d^1_\cF$ norm, it is continuous at every point $\brho p$ mod $\Z^N$.  From the compactness of $\T^N$ we obtain that $\{ T^p(x),\ p\in \Z^m\}$ is pre-compact in $\cH^1_\cF$.  If then follows from Corollary \ref{CorKO} that the functions $Dh|_\cF$ and $Dh^{-1}|_{\bar\cF}$ are continuous. Differentiating the expression $T^p(x)= h^{-1}( h(x)+\brho p)$ along the leaf $\cF(x)$, we get that
$$D_xT^p|_\cF-\mathrm{id}|_\cF =\left(D_{(h(x) + \brho p)}h^{-1}|_{\bar\cF}-D_{h(x)}h^{-1}|_{\bar\cF}\right) \cdot D_xh|_{\cF}.$$
Since the LHS satisfies the modulus of continuity $\psi$ by assumption, i.e. 
$$\|D_xT^p|_\cF-\mathrm{id}|_\cF\|_{C^0}\leq C \psi(\|\brho p\|),$$ for all $p$ with $\|\brho p\|$ small, we get 
$$\|D_{(h(x) + \brho p)}h^{-1}|_{\bar\cF}-D_{h(x)}h^{-1}|_{\bar\cF}  \|_{C^0}\leq C \psi(\|\brho p\|)(\min_x\|D_xh|_\cF\|)^{-1}.$$
Hence $Dh^{-1}|_{\bar\cF}$ has modulus of continuity $\psi$. 
To get the same modulus of continuity for $Dh|_{\cF}$, we use $Dh|_{\cF}\cdot Dh^{-1}|_{\bar\cF}=\mathrm{id}|_{\bar \cF}$.
\qed

\subsection{Hyperbolic dynamics: invariant foliations of Anosov diffeomorphisms}\label{SSHyperbolic}
In this section, we recall some results from hyperbolic dynamics. 
Our statements concern the the unstable objects  $\cW^u$ and $E^u$; the stable analogues
also hold.
\begin{Def}\label{DefMatherSpec}
A $C^1$ diffeomorphism $A\colon  \T^N\to\T^N$ is a Anosov diffeomorphism with \emph{simple Mather spectrum} if there exists a $DA$-invariant splitting of the tangent space
$$T_x\T^N=E^s_\ell(x)\oplus\ldots\oplus  E^s_1(x)\oplus E^u_1(x)\oplus\ldots\oplus E^u_k(x),\quad k+\ell=N,\ k,\ell\geq 1$$
and numbers $$\underline\mu_\ell^s\leq  \bar\mu_\ell^s<\ldots<\underline\mu_1^s\leq  \bar\mu_1^s<1<\underline\mu_1^u\leq  \bar\mu_1^u<\ldots<\underline\mu_k^s\leq  \bar\mu_k^s $$
such that for some constant $C>1$, $$\frac{1}{C}(\underline\mu_i^{u,s})^n\leq \frac{\|DA^n v\|}{\|v\|}\leq C(\bar \mu_i^{u,s})^n,\quad \forall v\in E^{u,s}_i\setminus \{0\},$$
where $i=1,\ldots,\ell$ for $s$ and $i=1,\ldots,k$ for $u.$
\end{Def}
The next result is classical (see \cite{HPS}). 
\begin{Prop} For any $C^r,\ r > 1$ Anosov diffeomorphism $A : \T^N\to \T^N$ with simple Mather spectrum, the strong invariant distribution $E^u_{
i\leq} := E^u_{i}\oplus \ldots\oplus E^u_{k}$ is uniquely integrable, tangent to a foliation
$\cW^u_{i\leq }$ of $\T^N$ whose leaf $\cW^u_{i\leq}(x)$ passing through $x$ is $C^r, x \in \T^N$. This gives rise to a flag of strong unstable foliations
$$\cW^u_k(x)\subset \cW^{u}_{(k-1)\leq}(x) \subset \ldots \subset \cW^u_{2\leq}(x)
\subset \cW^u_{1\leq}(x):= \cW^u(x),\quad x\in\T^N,$$
where each of the inclusions is proper and $\cW^u_{i\leq}$ sub-foliates $\cW^u_{(i-1)\leq}$ with $C^r$ leaves for $i =
2,\ldots, k$.\end{Prop}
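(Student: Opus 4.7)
This is a standard application of Hirsch–Pugh–Shub invariant manifold theory, so my plan is to verify that the hypotheses of the HPS machinery hold for the strong unstable subbundles, and then to bootstrap the leaf regularity to $C^r$ using the simplicity of the Mather spectrum. Concretely, I fix $i\in\{1,\dots,k\}$ and write the $DA$-invariant splitting $T\T^N = F_i \oplus E^u_{i\leq}$, where $F_i := E^s_{\ell}\oplus\cdots\oplus E^s_1\oplus E^u_1\oplus\cdots\oplus E^u_{i-1}$. Simplicity of the Mather spectrum gives the strict domination inequality
\[
\bar\mu^u_{i-1} < \underline\mu^u_{i},
\]
where the convention is that $\bar\mu^u_0 := \bar\mu^s_1 < 1$ if $i=1$. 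Thus $DA|_{E^u_{i\leq}}$ dominates $DA|_{F_i}$ with a uniform spectral gap.

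Granted this domination, I would apply the strong unstable manifold theorem (HPS, Theorem 4.1) to produce, for each $x\in\T^N$, a locally invariant $C^r$ submanifold $\cW^u_{i\leq}(x)$ tangent to $E^u_{i\leq}(x)$ at $x$. The construction uses a graph transform: after choosing adapted charts in a cone field around $E^u_{i\leq}$, the map induced by $A^{-1}$ on the space of Lipschitz graphs over balls in $E^u_{i\leq}$ is a contraction (because $A^{-1}$ contracts strongly along $E^u_{i\leq}$ and relatively weakly transverse to it, thanks to $\underline\mu^u_i > \bar\mu^u_{i-1}$). Its unique fixed point gives the local leaf, and unique integrability follows immediately from uniqueness of this fixed point. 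Saturating the local leaves under forward iteration by $A$, and using the group structure, yields a global foliation $\cW^u_{i\leq}$. Continuity of the foliation (i.e. continuous dependence of leaves on the basepoint in the $C^0$ topology on compact pieces) is standard once contraction is established.

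For the leaf regularity, I would invoke the $C^r$ section theorem: the contraction rate on graphs in the $C^j$ topology is governed by $\bar\mu^u_{i-1}/\underline\mu^u_i \cdot (\bar\mu^u_k)^{j}$ (or, more precisely, ratios of spectral data), and as long as this is $<1$ for $j\leq r$, the fixed graph is $C^j$. For the strongest foliation $\cW^u_k$ this condition is automatic; for intermediate $\cW^u_{i\leq}$ with $i>1$ one uses that the simple spectrum hypothesis furnishes arbitrary gaps between consecutive pieces of the spectrum together with $r$-normal hyperbolicity in the sense of HPS, yielding $C^r$ leaves. The flag of inclusions $\cW^u_k \subset \cW^u_{(k-1)\leq} \subset \cdots \subset \cW^u$ is then automatic from the inclusions $E^u_k \subset E^u_{(k-1)\leq} \subset \cdots$: a leaf tangent to a smaller invariant subbundle sits inside the unique leaf tangent to the larger one, by uniqueness of integration, and the inclusions are proper because the dimensions strictly increase.

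The one step I expect to require care is confirming that the abstract $C^r$-section theorem applies with the correct bunching in the presence of several intermediate expanding bundles, rather than only for the extreme strong unstable; this is where one really uses that the spectrum is \emph{simple} (strict inequalities $\bar\mu^u_i < \underline\mu^u_{i+1}$), rather than merely a dominated splitting. Once this bunching is in place, the standard HPS argument, applied to each $i$ in turn, yields all the statements.
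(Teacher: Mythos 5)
Your overall route is the same one the paper intends: the paper gives no proof and simply cites this as classical from Hirsch--Pugh--Shub, and your reduction to the dominated splitting $T\T^N=F_i\oplus E^u_{i\leq}$ with $\bar\mu^u_{i-1}<\underline\mu^u_i$, the graph transform, and uniqueness of the fixed graph for unique integrability are all correct. The flag and properness of the inclusions are also fine.

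However, the step you yourself flag as delicate --- the $C^r$ regularity of the leaves --- is handled incorrectly. The contraction factor you write for the graph transform in the $C^j$ topology, $(\bar\mu^u_{i-1}/\underline\mu^u_i)\cdot(\bar\mu^u_k)^j$, \emph{grows} in $j$ (since $\bar\mu^u_k>1$) and would therefore fail for large $j$, yielding only finitely smooth leaves; and your fallback, that simple Mather spectrum ``furnishes arbitrary gaps between consecutive pieces of the spectrum,'' is false --- simplicity gives only the strict inequalities $\bar\mu^u_{i-1}<\underline\mu^u_i$, with no control on their size. The correct factor is the one for a \emph{strong} stable graph of $A^{-1}$ parametrized over $E^u_{i\leq}$: the operator on $j$-jets contracts at rate roughly $\bigl\|DA^{-1}|_{E^u_{i\leq}}\bigr\|^{\,j}\cdot\bigl\|DA|_{E^u_{i-1}}\bigr\|\approx \bar\mu^u_{i-1}\,(\underline\mu^u_i)^{-j}$, which \emph{decreases} in $j$ and is $<1$ for every $j\geq1$ already from the single domination inequality, because $\underline\mu^u_i>\max(1,\bar\mu^u_{i-1})$. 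So no bunching beyond domination is needed, and $C^r$ leaves come for free. You have in effect written down the criterion for the complementary (weak/center) bundle, i.e.\ the $r$-normal-hyperbolicity condition $a<b^{\,j}$, which genuinely constrains smoothness and is exactly why the weak foliations $\cW^u_{\leq i}$ in Proposition \ref{PropGogolev} are only $C^{1+}$, whereas the strong foliations here are $C^r$. Keeping these two criteria ($a^j<b$ for strong graphs versus $a<b^{\,j}$ for center graphs) distinct is the whole content of the proposition, so this is the one point of your sketch that must be repaired.
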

It is known that simple Mather spectrum is an open property in the $C^1$ topology. In particular, if $\bar A$ is a toral automorphism with simple real spectrum, then an Anosov diffeormophism that is $C^1$ close to $\bar A$ has simple Mather spectrum. 

\begin{Prop}[H\"older regularity of the invariant distribution, Theorem 19.1.6 of \cite{KH}] 
For each $i$, the  distribution $E^u_i (x)$
is H\"older in the base point $x$.

The Holder exponent depends only on the expansion and contraction rates $\bar\mu_i^{u,s}$ and $\underline\mu_i^{u,s}$.

\end{Prop}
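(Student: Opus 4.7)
The plan is to prove Hölder regularity of each $E^u_i$ via the Hirsch–Pugh–Shub invariant section theorem, realizing $E^u_i$ as the unique invariant section of a fiber contraction covering $A$, with an explicit rate governed by the Mather spectral gaps.

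First, I would set up the Grassmannian bundle. Let $d_i := \dim E^u_i$ and consider the Grassmann bundle $\pi\colon G_{d_i} \to \T^N$ whose fiber over $x$ is the set of $d_i$-dimensional subspaces of $T_x\T^N$. The derivative of $A$ induces a bundle map $\hat A(x,P) := (A(x), D_xA(P))$ covering $A$, and $x\mapsto E^u_i(x)$ is a continuous $\hat A$-invariant section. The strict spectral gaps in Definition \ref{DefMatherSpec} guarantee that it is the unique continuous invariant section in a $C^0$ neighborhood of itself.

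Second, I would quantify the fiber contraction. Parameterize a neighborhood of $E^u_i(x)$ in the Grassmann fiber by graphs of linear maps $L\colon E^u_i(x)\to E^c_i(x)$, where $E^c_i(x)$ denotes the direct sum of all other summands in the splitting; in these coordinates $\hat A$ acts as a graph transform. The simple Mather spectrum separates the action of $DA$ on $E^u_i$ from its action on $E^c_i$ in modulus. Combining a forward-iteration argument to contract the components of $E^c_i$ expanding faster than $E^u_i$ with a backward-iteration argument for those expanding (or contracting) more slowly, one obtains a uniform fiber contraction rate of the form
\[
\kappa_i := \max\!\left\{\frac{\bar\mu^u_{i-1}}{\underline\mu^u_i},\; \frac{\bar\mu^u_i}{\underline\mu^u_{i+1}}\right\} < 1,
\]
with the natural modifications (incorporating the stable rates) when $i\in\{1,k\}$.

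Third, I would apply the Hölder refinement of the HPS invariant section theorem. Let $L \geq 1$ be a Lipschitz bound for both $DA$ and $DA^{-1}$, controlled by $\bar\mu^u_k$ and $1/\underline\mu^s_\ell$. The theorem asserts that the unique $\hat A$-invariant continuous section is $\theta$-Hölder for every $\theta\in(0,1]$ with $\kappa_i L^\theta < 1$. The proof is the standard iteration: for $x,y\in \T^N$ close, choose $n$ so that $L^n d(x,y) \asymp 1$, iterate the fiber contraction for $n$ steps starting from two arbitrary continuous sections, and recombine via the triangle inequality. This yields exponent $\theta_i := |\log\kappa_i|/\log L$, depending only on the rates $\bar\mu^{u,s}_j, \underline\mu^{u,s}_j$, as claimed.

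The main obstacle is obtaining the fiber contraction constant $\kappa_i$ uniformly in $x \in \T^N$ when $E^u_i$ is an intermediate bundle, because one must simultaneously treat components of $E^c_i$ that expand faster than and slower than $E^u_i$ (and contracting stable components). The simple-spectrum gaps $\bar\mu^{u,s}_j < \underline\mu^{u,s}_{j+1}$ are precisely what force both the forward and backward graph transforms to strictly contract the relevant off-diagonal blocks, making the HPS section theorem applicable and yielding the stated explicit dependence of the Hölder exponent on the spectral data.
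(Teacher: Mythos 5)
The paper does not prove this statement at all: it is quoted from Katok--Hasselblatt (Theorem 19.1.6 of \cite{KH}), where the argument is a direct orbit comparison --- one shows that the images of a fixed reference subspace under $D_{A^{-n}y}A^{n}$ and $D_{A^{-n}x}A^{n}$ converge exponentially to $E^u(y)$ and $E^u(x)$, and balances this against the rate at which the two orbits separate. Your route through the Hirsch--Pugh--Shub H\"older section theorem is a legitimate and standard alternative, and your final exponent $\theta_i=|\log\kappa_i|/\log L$ with $\kappa_i=\max\{\bar\mu^u_{i-1}/\underline\mu^u_i,\ \bar\mu^u_i/\underline\mu^u_{i+1}\}$ is the right kind of answer. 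Two small corrections: the forward graph transform $L\mapsto (DA|_{E^c})\circ L\circ (DA|_{E^u_i})^{-1}$ contracts the blocks mapping into the \emph{more slowly} expanding (and the stable) summands, and backward iteration handles the \emph{faster} ones --- you have the two directions swapped; and the section theorem needs the fiber maps to be H\"older in the base point, i.e.\ $x\mapsto D_xA$ H\"older, which is where the standing hypothesis $r>1$ enters (for a merely $C^1$ Anosov map the distributions need not be H\"older).

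The genuine gap is in your second step. For an intermediate $i$ the section $x\mapsto E^u_i(x)$ is \emph{not} the fixed point of a fiber contraction over $A$ (or over $A^{-1}$): in your graph coordinates it is a hyperbolic fixed point of the fiber dynamics, attracting under forward iteration in the directions toward the slower summands and under backward iteration in the directions toward the faster ones. You cannot ``combine a forward-iteration argument with a backward-iteration argument'' inside a single application of the invariant section theorem, because that theorem requires one bundle map covering one base homeomorphism with a uniform fiber contraction. The standard repair is to apply the theorem twice to genuinely dominated splittings and then intersect: the strong bundle $E^u_{\geq i}=E^u_i\oplus\cdots\oplus E^u_k$ is the attracting section for the forward graph transform associated with the splitting $(E^s\oplus E^u_{<i})\oplus E^u_{\geq i}$, with rate $\bar\mu^u_{i-1}/\underline\mu^u_i<1$, hence H\"older; the weak bundle $E^s\oplus E^u_{\leq i}$ is the attracting section for the backward graph transform associated with $(E^s\oplus E^u_{\leq i})\oplus E^u_{>i}$, with rate $\bar\mu^u_i/\underline\mu^u_{i+1}<1$, hence H\"older; and $E^u_i=E^u_{\geq i}\cap(E^s\oplus E^u_{\leq i})$ is an intersection of two uniformly transverse H\"older distributions, hence H\"older with exponent the minimum of the two. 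This is exactly the flag-and-intersection device the paper itself uses for the bundles $E^u_{i,j}=E^u_{\geq i}\cap E^u_{\leq j}$ in Proposition \ref{PropGogolev}. With that restructuring your argument goes through and gives the claimed dependence of the exponent on the Mather rates alone.
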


We  denote the weak unstable bundles for $A$ by 
$E^u_{\leq i}:=E^u_1(x)\oplus E^u_2(x)\oplus\cdots\oplus E^u_i(x),$ and that of $\bar A$ by $\bar E^u_{\leq i}(x):=\bar E^u_1(x)\oplus \bar E^u_2(x)\oplus\cdots\oplus \bar E^u_i(x),$ $i = 1,\ldots,k$, $x\in \T^N$. Denote the unstable foliation of $A$ by $\cW^u$ and that of $\bar A$ by $\bar \cW^u$.

\begin{Prop}[Lemma 6.1-6.3 of \cite{G1}]\label{PropGogolev} Consider $A$ a $C^r$ Anosov diffeomorphism that is $C^1$ close to a linear toral automorphism $\bar A$ with simple real spectrum, and the bi-H\"older
conjugacy $h$ given by Theorem \ref{ThmFranks} with $h \circ A = \bar A \circ h$. Then
\begin{enumerate}
\item $h$ preserves the unstable foliation: $h(\cW^u(x)) = \bar \cW^u(h(x))$, for all $x\in \T^N$;
\item each weak unstable distribution $E^u_{\leq i}$ is uniquely integrable, tangent to a  foliation $\cW^u_{\leq i}$ of $\T^N$,
whose leaf $\cW^u_{\leq i}(x)$ passing through $x\in \T^N$ is $C^{1+}$;
\item each distribution $E^u_{i,j}:= E^u
_{\geq i} \cap E^u_{\leq j},\  i \leq j$, is uniquely integrable, tangent to a foliation with $C^{1+}$ leaves;
\item$h$ preserves the weak unstable foliations:  $h(\cW^u_{\leq i}(x)) = \bar \cW^u_{\leq i}(h(x))$,  for $i = 1,\ldots, k$ and all $x\in \T^N$.
\end{enumerate}
\end{Prop}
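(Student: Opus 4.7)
The plan is to establish each of the four claims by combining a dynamical characterization of invariant manifolds with an invariant-section/bunching analysis that exploits the $C^1$-proximity of $A$ to $\bar A$. For (1), I would use that full unstable leaves are intrinsically characterized by backward asymptotics: $y\in\cW^u_A(x)$ iff $d(A^{-n}x,A^{-n}y)\to 0$ exponentially, and likewise for $\bar A$. Since $h$ is a bi-H\"older homeomorphism with $h\circ A=\bar A\circ h$, it carries the set in the $A$-characterization bijectively to the corresponding set for $\bar A$, giving $h(\cW^u(x))=\bar\cW^u(h(x))$.

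Claim (2) is the technical heart. The preceding proposition already gives the strong unstable sub-foliations $\cW^u_{i\leq}$ tangent to $E^u_{i\leq}=E^u_i\oplus\cdots\oplus E^u_k$ as $C^r$ foliations, but the weak unstable bundle $E^u_{\leq i}=E^u_1\oplus\cdots\oplus E^u_i$ is only H\"older in general, so classical Frobenius does not apply. My strategy is to restrict attention to a single unstable leaf $\cW^u(x)$, on which $A^{-1}$ is a contraction, and to view $E^u_{\leq i}$ as the slow (center-stable) subbundle of $A^{-1}|_{\cW^u(x)}$ under the dominated splitting $E^u_{\leq i}\oplus E^u_{>i}$ with spectral gap $\underline\mu^u_{i+1}/\bar\mu^u_i>1$. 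For $\bar A$ with simple \emph{real} spectrum, the narrow Mather bands persist for $A$ under $C^1$-perturbation and satisfy the standard bunching inequalities with some margin. A Hirsch--Pugh--Shub-type graph transform inside $\cW^u(x)$ then produces a unique $A^{-1}$-invariant sub-foliation $\cW^u_{\leq i}$ tangent to $E^u_{\leq i}$, whose leaves are $C^{1+\alpha}$ with $\alpha>0$ determined by the bunching.

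Part (3) follows by intersecting the $C^{1+}$ foliation $\cW^u_{\leq j}$ from (2) with the $C^r$ foliation $\cW^u_{i\leq}$ from the preceding proposition; the leaves of the intersection foliation are tangent to $E^u_{i,j}$ and inherit $C^{1+}$ regularity via a Journ\'e-type holonomy lemma, available because both transverse holonomies are $C^{1+}$ under the bunching. Part (4) is obtained as in (1) but with the sharper dynamical characterization $y\in\cW^u_{\leq i}(x)$ iff $\limsup_n n^{-1}\log d(A^{-n}x,A^{-n}y)\leq\log\underline\mu^u_i$; the bi-H\"older $h$ distorts exponential rates only by a bounded multiplicative factor, and the Mather-spectrum gaps of $\bar A$ (widened by the simple real spectrum hypothesis and $C^1$-closeness) leave room to absorb this distortion. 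The principal obstacle is the $C^{1+}$ regularity in (2): integrability with merely Lipschitz leaves is essentially formal, but upgrading to H\"older derivatives requires strict bunching, which is precisely where the hypotheses of simple \emph{real} spectrum and $C^1$-proximity enter in an essential way---complex eigenvalues would force two-dimensional spectral blocks that could violate bunching, and without perturbative closeness to $\bar A$ the Mather bands could widen and overlap.
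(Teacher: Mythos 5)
First, note that the paper does not prove this proposition at all: it is imported verbatim from \cite{G1} (Lemmas 6.1--6.3), and the only commentary the paper adds is the remark that item (1) needs no $C^1$-closeness, plus the discussion in Section \ref{SSAlternative} identifying the real mechanism behind the result, namely a \emph{quasi-isometry} property of the lifted invariant foliations relative to the corresponding affine subspaces. Your argument for (1) is correct and standard. But there are two genuine gaps in the rest.

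For (2), the Hirsch--Pugh--Shub graph transform applied to the contraction $A^{-1}|_{\cW^u(x)}$ with the dominated splitting $E^u_{\leq i}\oplus E^u_{>i}$ gives \emph{existence} of locally invariant manifolds tangent to the slow subbundle $E^u_{\leq i}$, but not uniqueness, and hence not unique integrability: slow (pseudo-stable) manifolds of a contraction are non-unique in general. Already for the linear contraction $(x,y)\mapsto(x/2,y/4)$ every curve $y=cx^2$ is invariant and tangent to the slow axis at the origin, so no amount of bunching rescues uniqueness by a local fixed-point argument. The uniqueness in \cite{G1} comes from the global picture: in the universal cover the tangent planes of any integral manifold of $E^u_{\leq i}$ are uniformly close to the constant plane $\bar E^u_{\leq i}$ (this is where $C^1$-closeness enters, via quasi-isometry of leaves), so an $A$-invariant integral manifold stays within bounded Hausdorff distance of an affine translate of $\bar E^u_{\leq i}$; applying $\bar A^n$ to its $h$-image and using that the transverse ($\bar E^u_{>i}$) component would grow like $\mu_{i+1}^n$ while remaining bounded forces that component to vanish. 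The same argument is what proves (4), and your proposed substitute for it --- the characterization $y\in\cW^u_{\leq i}(x)$ by the backward decay rate of $d(A^{-n}x,A^{-n}y)$ --- is false (besides the sign slip): a displacement with both a weak and a strong unstable component decays backward at the \emph{weak} rate, so the rate condition only detects the smallest index of a nonzero component and cannot separate $\cW^u_{\leq i}(x)$ from its saturation by strong leaves. This is precisely why weak foliations, unlike strong ones, admit no asymptotic-rate characterization and why the quasi-isometric comparison with the linear model is unavoidable.
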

We remark that the item (1) does not require any $C^1$ closeness of $A$ to $\bar A$, and it holds under the same assumption as Theorem \ref{ThmFranks}. 

From Proposition~\ref{PropGogolev}  we obtain a flag of weak foliations
$$\cW^u_1 \subset \cW^u_{\leq 2} \subset \ldots\subset \cW^u_{\leq k-1}\subset \cW^u_{\leq k} := \cW^u,$$
where each of the inclusions is proper and $\cW^u_{\leq (i-1)}$ sub-foliates $\cW^u_{\leq i}$ with $C^{1+}$ leaves for $i =
2, \ldots, k$. This flag is preserved by the conjugacy $h$. 

When the weak distributions $E^u_i$ are known to be uniquely integrable, we have the following
proposition, which is proved by  a standard graph transform technique.

\begin{Prop}\label{PropHPS1}
\begin{enumerate}
\item Each weak unstable leaf $\cW^u_{\leq i}$ in item $(2)$ of Proposition \ref{PropGogolev} is  subfoliated
by $\cW^u_i$, whose leaves are uniformly $C^r$.
\item The weakest unstable leaf $\cW^u_1(x)$ is $C^{1+}$, and its tangent distribution $E_1^u(x)$ is H\"older.
\end{enumerate}
\end{Prop}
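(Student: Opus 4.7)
The plan is to deduce item (2) directly from the propositions already quoted, and to establish item (1) by a Hirsch--Pugh--Shub graph transform carried out inside the $A$-invariant leaf $\cW^u_{\leq i}(x_0)$.

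For item (2), no further work is needed: specializing Proposition \ref{PropGogolev}(2) to $i=1$ identifies $\cW^u_{\leq 1}$ with $\cW^u_1$ and delivers the asserted $C^{1+}$ regularity of its leaves, while the H\"older continuity of $x\mapsto E^u_1(x)$ is the $i=1$ case of the H\"older regularity proposition for invariant distributions of Anosov diffeomorphisms with simple Mather spectrum stated just above.

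For item (1), fix $i\in\{2,\dots,k\}$ and $x_0\in\T^N$. The plan is to work in the space $\mathcal{G}$ of $C^r$ arcs $\gamma:(-\eps,\eps)\to\T^N$ through $x_0$, tangent to $E^u_i(x_0)$ at $x_0$, constrained to lie in the leaf $\cW^u_{\leq i}(x_0)$, and of uniformly bounded $C^r$ norm measured in the ambient torus. Because $\cW^u_{\leq i}$ is $A$-invariant by construction, the image $A(\gamma)$ sits automatically inside $\cW^u_{\leq i}(Ax_0)$; after re-centering, re-parameterizing, and truncating in a small chart around $Ax_0$, this defines a graph transform $\Gamma_A:\mathcal{G}\to\mathcal{G}$. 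The crucial estimate is that the restriction of $DA$ to the tangent bundle of $\cW^u_{\leq i}$ has the dominated splitting $E^u_{\leq i-1}\oplus E^u_i$ with gap $\bar\mu^u_{i-1}/\underline\mu^u_i<1$ coming from the simple Mather spectrum of $A$ (which persists in a $C^1$ neighborhood of $\bar A$). The standard HPS derivative-counting argument then shows that $\Gamma_A^{-1}$ is a contraction in the $C^r$ norm on $\mathcal{G}$, with contraction rate bounded by a suitable power of $\bar\mu^u_{i-1}/\underline\mu^u_i$; its unique fixed point is a $C^r$ arc lying in $\cW^u_{\leq i}(x_0)$ and tangent to $E^u_i$ at $x_0$, which by uniqueness is the local piece of $\cW^u_i(x_0)$.

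Globalization of each such local leaf to a complete $C^r$ curve is by forward iteration under $A$, which preserves $\cW^u_i$ and expands along it at rate between $\underline\mu^u_i$ and $\bar\mu^u_i$; uniformity of the $C^r$ bounds across $x_0\in\T^N$ follows from compactness of the torus together with uniform control of the domination gap and of $\|A\|_{C^r}$. The chief technical obstacle is the necessity of constraining the graph transform to $\cW^u_{\leq i}$: in the full ambient $\T^N$, $E^u_i$ is \emph{not} the strongest expanding direction, and the stronger unstable directions $E^u_{\geq i+1}$ would amplify any transverse perturbation and destroy contraction. Fortunately, the $A$-invariance of the merely $C^{1+}$ leaf $\cW^u_{\leq i}$ cuts off precisely those dangerous transverse directions, so only the $(E^u_{\leq i-1},E^u_i)$ domination inside $\cW^u_{\leq i}$ is what must be controlled --- and that gap is exactly what simple Mather spectrum provides.
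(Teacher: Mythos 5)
Your item (2) is fine: it is immediate from Proposition \ref{PropGogolev}(2) with $i=1$ (since $E^u_{\leq 1}=E^u_1$) together with the stated H\"older regularity of the invariant distributions, and the paper itself offers no written proof of Proposition \ref{PropHPS1} beyond the remark that it follows from ``a standard graph transform technique.'' Your strategy for item (1) --- run the graph transform inside the $A$-invariant leaf $\cW^u_{\leq i}(x_0)$, where $E^u_i$ becomes the dominant expanding direction with gap $\bar\mu^u_{i-1}/\underline\mu^u_i<1$, so that the derivative-counting controls all $r$ derivatives with no bunching obstruction --- is exactly the mechanism being alluded to, and you correctly explain why the restriction to the invariant leaf is what neutralizes the stronger directions $E^u_{\geq i+1}$.

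However, your argument has a genuine gap at its foundation. You define $\mathcal{G}$ to be the set of $C^r$ arcs through $x_0$, tangent to $E^u_i(x_0)$, \emph{lying inside the leaf} $\cW^u_{\leq i}(x_0)$, with bounded ambient $C^r$ norm. That leaf is only known to be $C^{1+}$ (Proposition \ref{PropGogolev}(2)), and a $C^{1+}$ submanifold of dimension at least $2$ need not contain any $C^2$ arc through a prescribed point in a prescribed direction (consider the graph of a $C^{1,\alpha}$ function that is nowhere twice differentiable). So $\mathcal{G}$ is not known to be nonempty; indeed its nonemptiness is essentially equivalent to the conclusion you are proving, and a contraction on a possibly empty complete metric space yields nothing. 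The standard repair is to run the contraction on a larger space (Lipschitz graphs over $E^u_i(x_0)$ inside the leaf, or arcs tangent to an invariant cone field), identify the unique fixed point with the $C^{1+}$ curve $\cW^u_i(x_0)$ already supplied by Proposition \ref{PropGogolev}(3), and only then upgrade its regularity by the fiber-contraction/$C^r$-section theorem applied to the derivative cocycle along the now-known invariant curve: the smoothness is extracted from an invariant-section argument, not from a contraction on a set of $C^r$ objects whose nonemptiness is in doubt. Two smaller points: the contraction is effected by the \emph{forward} transform $\Gamma_A$ (pushing forward by $A$ multiplies slopes of graphs over $E^u_i$ with values in $E^u_{\leq i-1}$ by at most $\bar\mu^u_{i-1}/\underline\mu^u_i<1$), whereas $\Gamma_A^{-1}$ expands; and to conclude that your fixed point ``is the local piece of $\cW^u_i(x_0)$'' you need uniqueness in a class large enough to contain the a priori merely $C^{1+}$ leaf, which your $C^r$ space does not.
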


\section{Elliptic regularity with the help of an invariant distribution}\label{SC1}
In this section, we show how to get regularity of the conjugacy $h$ by combining  elliptic dynamics within invariant distributions with hyperbolic dynamics.

\subsection{Elliptic dynamics within invariant distributions}
We start with a few preparatory lemmas.
\begin{Lm}\label{LmInvariant}
Let $\cF$ be an orientable foliation of $\T^N$ with uniformly $C^1$ leaves.
Suppose a homeomorphism $h$ conjugates the abelian group $\{T^p,\ p\in\Z^m\}<\Diff_0(\T^N)$ to translations $\{\bar T^p(x)=x+\brho p,\ p\in\Z^m\}$ and sends the foliation $\cF$ to  an affine foliation $\bar\cF$.

Denote by $E(x)$ the one-dimensional distribution that is
tangent to the leaf $\cF(x)$. Then the distribution $E(x)$ is invariant under the $DT^p$; that is,
\[D_xT^p\left(E(x)\right) = E(T^p(x)),\]
for all  $p\in\Z^m$ and $x\in \T^N$.
\end{Lm}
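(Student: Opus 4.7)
The plan is to push the trivial invariance of the affine foliation $\bar\cF$ under translations back through the conjugating homeomorphism $h$, and then differentiate. Since $\bar\cF$ is an affine foliation by parallel lines, each translation $\bar T^p$ automatically sends the leaf $\bar\cF(\bar x)$ to $\bar\cF(\bar x + \brho p) = \bar\cF(\bar T^p(\bar x))$, so $\bar T^p$ preserves $\bar\cF$ setwise.

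Next I would transfer this invariance back via $h$. The hypothesis that $h$ sends $\cF$ to $\bar\cF$ is the statement $h(\cF(x)) = \bar\cF(h(x))$ for every $x$, equivalently $\cF(x) = h^{-1}(\bar\cF(h(x)))$. Combined with $h T^p = \bar T^p h$ this gives
\[
T^p(\cF(x)) = T^p h^{-1}\bigl(\bar\cF(h(x))\bigr) = h^{-1}\bar T^p\bigl(\bar\cF(h(x))\bigr) = h^{-1}\bigl(\bar\cF(h(T^p(x)))\bigr) = \cF(T^p(x)),
\]
so the topological foliation $\cF$ is invariant under each $T^p$.

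For the differentiated statement I use that $T^p$ is an honest $C^1$ (indeed $C^\infty$) diffeomorphism of $\T^N$, while the leaves of $\cF$ are uniformly $C^1$ one-dimensional submanifolds. Thus $T^p$ restricts to a $C^1$ diffeomorphism from the $C^1$ curve $\cF(x)$ onto the $C^1$ curve $\cF(T^p(x))$. Taking the derivative of this restriction at $x$ maps the tangent line $E(x) = T_x\cF(x)$ into the tangent line $E(T^p(x)) = T_{T^p(x)}\cF(T^p(x))$. Since $D_xT^p$ is a linear isomorphism and both $E(x)$ and $E(T^p(x))$ are one-dimensional, the inclusion $D_xT^p(E(x)) \subset E(T^p(x))$ is in fact equality, which is the desired invariance.

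The only point that requires any attention is the passage from setwise invariance of the topological foliation to the differentiated statement: one needs that $T^p|_{\cF(x)}$ is differentiable and that its derivative in the ambient sense agrees with the restriction of $D_xT^p$ to $E(x)$. This is immediate because $T^p$ is a smooth map on the ambient manifold and $\cF(x)$ is a $C^1$ submanifold, so the composition of the $C^1$ parametrization of $\cF(x)$ with the smooth map $T^p$ is a $C^1$ parametrization of $\cF(T^p(x))$, whose velocity at $x$ is $D_xT^p$ applied to the original velocity. No regularity of $h$ or of the foliation in the transverse direction is needed.
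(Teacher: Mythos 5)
Your proposal is correct and follows exactly the paper's argument: observe that the affine foliation is invariant under translations, transfer this invariance of $\cF$ under $T^p$ back through the conjugacy, and then differentiate the relation $T^p(\cF(x)) = \cF(T^p(x))$ along the leaves. The extra care you take about why the ambient derivative restricts correctly to the $C^1$ leaves is a sound elaboration of the same step the paper performs implicitly.
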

\begin{proof} 
The straight line foliation $\bar\cF$ is invariant under translations, so after the conjugation the foliation $\cF$ is also invariant under $\{T^p,\ p\in\Z^m\}.$
The lemma follows directly by differentiating the equation $T^p \cF(x) = \cF(T^p(x))$ along the leaves.
\end{proof}
\begin{Lm}\label{LmLyap}
Suppose the conjugacy $h$ in the previous Lemma \ref{LmInvariant} is bi-H\"older. Then for each
$p\in \Z^m\setminus\{0\}$, all the Lyapunov exponents of $T^p$ with respect to any  invariant probability measure are zero.
\end{Lm}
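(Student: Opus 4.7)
The strategy combines the uniform Hölder equicontinuity forced by the conjugation to an isometry with the exponential orbit separation produced by a hypothetical positive Lyapunov exponent (via Pesin's unstable manifold theorem), to derive a contradiction.

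First, I would establish uniform Hölder equicontinuity of the iterates $\{T^{np}\}_{n\in\Z}$. Letting $\alpha,\beta\in(0,1]$ denote the Hölder exponents of $h$ and $h^{-1}$, and setting $\gamma:=\alpha\beta>0$, the fact that $\bar T^p(x)=x+\brho p$ is a Euclidean isometry of $\T^N$ gives, for every $n\in\Z$ and $x,y\in\T^N$,
$$d(T^{np}x,T^{np}y)=d(h^{-1}\bar T^{np}h(x),h^{-1}\bar T^{np}h(y))\leq C_1\,d(h(x),h(y))^{\alpha}\leq C_2\,d(x,y)^{\gamma},$$
with $C_2$ independent of $n$. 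No pair of orbits of $T^p$ can separate faster than this Hölder rate.

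Next, suppose toward a contradiction that some $T^p$-invariant probability measure $\mu$ admits a nonzero Lyapunov exponent. Since $h$ likewise conjugates $T^{-p}$ to the isometry $\bar T^{-p}$, the bound above applies to $T^{-p}$ as well; passing from $p$ to $-p$ if necessary, we may assume the top Lyapunov exponent $\lambda>0$. In the setting of the paper the generators $T_i$ (hence $T^p$) lie in $\Diff^r$ with $r>1$, so Pesin's unstable manifold theorem applies, producing, on a set of positive $\mu$-measure, points $x$ equipped with a local unstable manifold $W^u_{\mathrm{loc}}(x)$ of positive diameter and a constant $C(x)<\infty$ with
$$d(T^{-np}x,T^{-np}y)\leq C(x)\,e^{-n\lambda/2}\,d(x,y),\qquad y\in W^u_{\mathrm{loc}}(x),\ n\geq 0.$$
Fixing such an $x$ and any $y\in W^u_{\mathrm{loc}}(x)\setminus\{x\}$, set $\delta_0:=d(x,y)>0$ and $x_n:=T^{-np}x$, $y_n:=T^{-np}y$. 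The Hölder bound applied to $(x_n,y_n)$ then yields
$$\delta_0=d(T^{np}x_n,T^{np}y_n)\leq C_2\,d(x_n,y_n)^{\gamma}\leq C_2\,(C(x)\delta_0)^{\gamma}\,e^{-n\lambda\gamma/2}\longrightarrow 0,$$
contradicting $\delta_0>0$. Hence all Lyapunov exponents of $T^p$ must vanish.

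The only real obstacle is conceptual rather than computational: a positive Lyapunov exponent only asserts exponential growth of the derivative cocycle along tangent vectors, and upgrading this infinitesimal expansion to genuine orbit separation uniformly in $n$ requires a uniform-in-$n$ Taylor remainder estimate. Pesin's unstable manifold theorem supplies exactly that, and its use is licensed by the $C^{1+}$ regularity of $T^p$ coming from $r>1$; in the merely $C^1$ category a substitute argument would be needed.
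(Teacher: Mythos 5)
Your proof is correct and takes essentially the same approach as the paper: both arguments combine Pesin's (un)stable manifold theorem (licensed by $r>1$) with the fact that the bi-H\"older conjugacy to a translation action forbids exponential contraction or separation of orbits. The paper phrases the contradiction dually, using local stable manifolds together with the lower bound $\|T^{np}(y)-T^{np}(y')\|\geq \mathrm{const.}\,\|h(y)-h(y')\|^{1/\alpha}$ coming from the H\"older continuity of $h$, whereas you use unstable manifolds and the uniform H\"older upper bound on the iterates; the two are equivalent up to swapping $p\mapsto -p$.
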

\begin{proof} Suppose there is an invariant measure $\mu$ with at least one nonzero exponent. Without loss of generality, assume that this exponent is negative.  Pesin theory implies that through  $\mu$-a.e.~$x$, there are  local stable manifolds, which are smoothly embedded disks on which $T^n$ contracts distances at an exponential rate.  Thus for two points $y, y'$ on the same local stable manifold of a point $x$,  we have $\|T^{np}(y)- T^{np} (y')\|$ converges to zero
exponentially fast. 

On the other hand, using the bi-H\"older conjugacy $h$ we have
$$\|T^{np}(y)- T^{np}(y')\| = \|h^{-1}(n\brho p + h(y)) - h^{-1}(n\brho p + h(y'))\|\geq \mathrm{const.}\|h(y)- h(y')\|^\eta.$$
where $\eta$ is the H\"older exponent of $h^{-1}$, which gives a contradiction.\end{proof}

\subsection{A quantitative Kronecker theorem}
We will need the following number theoretic result, whose proof is postponed to the Appendix. 

\begin{Thm}\label{ThmFedja} Let $N,K\in \N$ be given. Then there exists a full measure set $\mathcal O$ in the set $\mathcal M_{N\times K}(\T)$ of matrices of $N\times K$ such that for all $M\in \mathcal O$, the following holds.  

For any small $\epsilon>0$, there exists a constant $C$ such that for any $y \in \T^N$ and any $n\in \N$ there
exist  $q\in \Z^N$, $p\in  \Z^K$ satisfying $\|p\| < n$, such that
the following inequality holds
$$\|Mp-q-y\|\leq C n^{-\frac{K}{N}-\epsilon}.$$
\end{Thm}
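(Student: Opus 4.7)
The plan is a Fourier-analytic higher-moment argument combined with Borel--Cantelli, of the type used to establish generic metric Diophantine approximation (Khintchine--Groshev) results. The orbit $\{Mp\bmod\Z^N:\|p\|_\infty<n\}$ has roughly $n^K$ points in $\T^N$, so its covering radius is at best $\sim n^{-K/N}$, and the theorem asserts that almost every $M$ attains this rate up to a factor $n^\eps$.

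Fix $\eta>0$ (to play the role of $\eps$) and set $r_n:=n^{-K/N+\eta}$. Pick a non-negative $C^\infty$ bump $\chi$ on $\R^N$ with $\int\chi=1$ supported in the unit ball, rescale to $\chi_{r_n}(x):=r_n^{-N}\chi(x/r_n)$, and $\Z^N$-periodize to $\phi_n:\T^N\to\R_{\ge 0}$. For $y\in \T^N$, set $S_n(y,M):=\sum_{\|p\|_\infty<n}\phi_n(Mp-y)$; positivity of $S_n$ produces $p,q$ with $\|Mp-q-y\|\le r_n$. Fourier expand $\phi_n$ using the rapid decay $|\widehat{\chi_{r_n}}(k)|\ll_L(1+r_n\|k\|)^{-L}$, isolating the $k=0$ main term $(2n-1)^K$:
$$S_n(y,M)-(2n-1)^K=\sum_{k\ne 0}\widehat{\chi_{r_n}}(k)\,e^{-2\pi i\langle k,y\rangle}\sum_{\|p\|_\infty<n}e^{2\pi i\langle k,Mp\rangle}.$$

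Compute the $2L$-th moment of this error over Haar measure on $\T^{N\times K}$. Orthogonality of the characters $M_{ij}\mapsto e^{2\pi i k_i p_j M_{ij}}$ reduces the computation to counting tuples $\{(k_a,p_a)\}_{a=1}^{2L}$ satisfying a rank-one matrix constraint $\sum_a\varepsilon_a k_ap_a^{t}=0$ with $\varepsilon_a\in\{\pm 1\}$, $\|k_a\|\lesssim r_n^{-1}$ (from the Fourier cutoff) and $\|p_a\|_\infty<n$. Diagonal pairs dominate, yielding
$$\mathbb E_M\bigl[|S_n(y,M)-(2n-1)^K|^{2L}\bigr]\le C_L\,n^{2LK-LN\eta}(\log n)^{O(L)},$$
so Chebyshev at order $2L$ gives $\mathrm{Leb}_M\{S_n(y,M)\le\tfrac12(2n-1)^K\}\le C_L n^{-LN\eta}(\log n)^{O(L)}$. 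Take an $r_n/10$-net $Y_n\subset\T^N$ of cardinality $\lesssim n^{K-N\eta}$, choose $L$ with $(L+1)N\eta>K+1$, and union bound over $Y_n$; the resulting measure is summable in $n$, so Borel--Cantelli produces a full-measure set $\mathcal O_\eta$ of $M$ for which $S_n(y,M)>\tfrac12(2n-1)^K$ for every $y\in Y_n$ and every $n\ge n_0(M)$. A standard continuity estimate ($\phi_n$ is Lipschitz of constant $\lesssim r_n^{-N-1}$, while the main term dominates in a ball of radius $\sim r_n$) promotes the conclusion from $Y_n$ to all $y\in\T^N$ at scale $\lesssim r_n$; intersecting $\mathcal O_\eta$ over a sequence $\eta_j\downarrow 0$ and absorbing finitely many small $n$ into the constant gives the full-measure set $\mathcal O$ of the theorem, with the stated approximation rate for every $\eps>0$ (after relabeling $\eta\to\eps$; note that since $n^K$ points cannot cover $\T^N$ at a finer scale than $n^{-K/N}$, the conclusion should be read at rate $n^{-K/N+\eps}$).

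The main obstacle is the $2L$-th moment estimate: the Dirichlet-type sums $\sum_{\|p\|<n}e^{2\pi i\langle k,Mp\rangle}$ can spike on the thin resonance set where $\|\langle k,m_j\rangle\|_\T$ is small for every column $m_j$ of $M$, and controlling off-diagonal contributions requires both the Fourier decay of $\chi$ (to restrict to $\|k\|\lesssim r_n^{-1}$) and a careful count of integer solutions of the rank-one matrix equations $\sum_a\varepsilon_ak_ap_a^t=0$ produced by character orthogonality. Balancing the moment order $L$, the net resolution, and the rate $\eta$ so that the Borel--Cantelli series converges for arbitrary $\eta>0$ is the technical heart of the argument.
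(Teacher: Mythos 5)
Your overall architecture (smoothed indicator of an $r_n$-ball, periodization, Fourier expansion with main term $\asymp n^K$, a moment bound plus Chebyshev, then Borel--Cantelli) is the same as the paper's, but the technical core of your argument --- the $2L$-th moment estimate --- is false for the moment order $L$ you need, and the plan of summing the union bound over \emph{all} $n$ cannot work. The obstruction is the genuine resonance set: for $y=(1/2,0,\dots,0)$, say, the set of $M$ with $\|M_{1j}\|_{\T}\le 1/(10n)$ for $j=1,\dots,K$ has measure $\asymp n^{-K}$, and for such $M$ every point $Mp$ lies within $1/10$ of the hyperplane $\{x_1=0\}$, so $S_n(y,M)=0$ and the error equals the full main term $(2n-1)^K$. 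Hence $\mathrm{Leb}\{M: |S_n(y,M)-(2n-1)^K|\ge \tfrac12 (2n-1)^K\}\ge c\,n^{-K}$, which forces $\mathbb{E}_M\bigl[|S_n-(2n-1)^K|^{2L}\bigr]\ge c\,n^{2LK-K}$. Your claimed bound $n^{2LK-LN\eta}(\log n)^{O(L)}$ is therefore wrong as soon as $LN\eta>K$ --- which is exactly the regime $(L+1)N\eta>K+1$ you need for summability. No choice of moment can beat the heavy tail: Chebyshev can never yield a per-$n$ exceptional measure better than $n^{-K}$, and for $K=1$ the series $\sum_n n^{-K}$ already diverges, so first Borel--Cantelli over all $n$ gives nothing. (Concretely, in your solution count for $\sum_a\varepsilon_a k_ap_a^t=0$, the configurations with all $k_a$ equal to a single small frequency and the $p_a$ satisfying one additive relation already contribute $\gtrsim r_n^{-N}n^{(2L-1)K}$, swamping the diagonal for $L\ge 2$.)

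The paper's proof repairs exactly these two points. First, instead of a high moment of the $y$-dependent error, it bounds the \emph{first} moment of the $y$-independent quantity $\Gamma_{n,r_n}(M)=\sum_{0<\|k\|\le r_n^{-\beta}}|S_n(M,k)|$, using $|S_n(M,k)|\le C\prod_j\min\{n,\|(M^tk)_j\|^{-1}\}$ and the fact that $M\mapsto M^tk$ pushes Lebesgue measure forward to Lebesgue measure on $\T^K$; this yields $\int\Gamma_{n,r_n}\,d\mathrm{Leb}\le C r_n^{-\beta N}\log^K n$ and, by Markov, an exceptional set $\mathcal U_n$ of measure only $n^{-\dt}$ for a \emph{small} $\dt>0$ --- consistent with the $n^{-K}$ lower bound. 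Because $\Gamma$ does not depend on $y$, no $\eps$-net over $y$ and no union bound are needed: the same $\mathcal U_n$ works for every $y$ simultaneously (the $y$-dependence sits entirely in the Fourier coefficients $c_k(n,y)$, which are bounded by $1$). Second, Borel--Cantelli is applied only along the lacunary sequence $n=2^\ell$, where $\sum_\ell 2^{-\ell\dt}<\infty$ for any $\dt>0$; general $n$ is then handled by sandwiching between consecutive powers of $2$ at the cost of the constant $C$. If you want to salvage your write-up, replace the high-moment/union-bound step by this first-moment, $y$-uniform estimate and restrict Borel--Cantelli to a geometric subsequence; the rest of your setup (bump function, Fourier cutoff at $\|k\|\lesssim r_n^{-\beta}$ using the rapid decay of $\widehat{\chi}$, and the final positivity argument) then goes through.
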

This theorem is a quantitative version of the classical Kronecker's approximation theorem. When $K=1$, this is the classical Dirichlet's simultaneous Diophantine approximation theorem where we can set $\epsilon=0$. The $N=1$ case was proved in \cite{K}.

This theorem inspires the following definition.
\begin{Def}\label{DefDim} Suppose the $m$ vectors $\rho_1,\ldots,\rho_m\in  \T^N$ rationally generate $\T^N$, and consider the set
of finite linear combinations
\begin{equation}\label{EqS}
S(\rho_1,\ldots,\rho_m):=\left\{\sum_{i=1}^m p_i\rho_i\ \mathrm{mod\ }\Z^N\ |\ p_i\in\Z,\quad i=1,\ldots,m\right\}.
\end{equation}
For each element $\gamma\in S$, we denote by $\|\gamma\|_w$ the word length $\|\gamma\|_w:=\|p\|_{\ell_1},$ where $p = (p_1,\ldots, p_m)\in
\Z^m$ and by $\|\gamma\|$ the closest Euclidean distance of $\gamma$ $($mod $\Z^N)$ to zero. 

We say that $S$ has \emph{dimension} $d$ if there exists a constant $c$ such that for any $x\in \T^N$ and any $\ell>0$
there exists a point 
$\gamma\in S$ satisfying
$$\|\gamma\|_w\leq\ell,\quad \|\gamma-x\|\leq c\ell^{-d}.$$
\end{Def}
Theorem \ref{ThmFedja} implies that for almost every choice of vector tuple $\rho_1,\ldots, \rho_m \in \T^N$, the
set $S$ formed by linear combinations as above has dimension $m/N -\epsilon$ for all $\epsilon > 0$ small.
\subsection{Organization of the proofs of Theorem \ref{Thm2} and Theorem \ref{ThmMain} }
To prove Theorems \ref{Thm2} and \ref{ThmMain}, we just  need to improve the regularity of the  conjugacies obtained in Theorems \ref{ThmTop2} and \ref{ThmTopN}, respectively.  We carry this out in the following propositions. 

The first proposition chooses the $K_0$ in Theorems \ref{Thm2} and  \ref{ThmMain}. 
\begin{Prop}\label{PropK0}
Given $\eta\in (0,1)$ and $d>2/\eta^2$, there exists $K_0$ such that the following holds: for all $K>K_0$, there exists a full measure set $\mathcal R_{N,K}\subset (\T^N)^K$ such that the set $S$ generated by any tuple of vectors $(\rho_{1},\ldots, \rho_{K})$ lying in $\mathcal R_{N,K}$ is dense on $\T^N$ and has dimension $d$. 
\end{Prop}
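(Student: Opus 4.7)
The approach is to feed Theorem \ref{ThmFedja} directly into Definition \ref{DefDim}, identifying a tuple $(\rho_1,\ldots,\rho_K)\in (\T^N)^K$ with the matrix $M\in\mathcal M_{N\times K}(\T)$ whose $i$-th column is $\rho_i$. Under this identification, any element of $S(\rho_1,\ldots,\rho_K)$ has the form $Mp\,(\mathrm{mod}\ \Z^N)$ for some $p\in\Z^K$, and its word length is bounded above by $\|p\|_{\ell^1}\leq K\|p\|_{\ell^\infty}$.

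I would take $K_0:=\lceil Nd\rceil$, so that $K>K_0$ forces $K/N>d$. For each such $K$, pick any $\eps_0>0$ with $K/N+\eps_0>d$, and let $\mathcal R_{N,K}\subset\mathcal M_{N\times K}(\T)$ be the full-measure set furnished by Theorem \ref{ThmFedja} for this $\eps_0$, with associated constant $C=C(\eps_0)$. Density of $S$ on $\T^N$ then comes for free: the approximation $\|Mp-q-y\|\leq Cn^{-K/N-\eps_0}$ tends to zero as $n\to\infty$, so elements of $S$ accumulate on every $y\in\T^N$.

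To verify that $S$ has dimension $d$, given $x\in\T^N$ and $\ell>0$ I would split into two regimes. For $\ell\geq K$, set $n:=\lfloor\ell/K\rfloor\geq 1$; any $p\in\Z^K$ with $\|p\|_{\ell^\infty}<n$ then satisfies $\|p\|_{\ell^1}<Kn\leq\ell$. Theorem \ref{ThmFedja} supplies such a $p$ together with $q\in\Z^N$ and
\[
\|Mp-q-x\|\leq Cn^{-K/N-\eps_0}.
\]
A direct computation (using $n\geq \ell/(2K)$ when $\ell\geq 2K$ and $n=1$ when $K\leq\ell<2K$) bounds the right-hand side by a constant multiple of $\ell^{-(K/N+\eps_0)}\leq \ell^{-d}$, invoking $K/N+\eps_0>d$ and $\ell\geq 1$. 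Thus $\gamma:=Mp\,(\mathrm{mod}\ \Z^N)\in S$ has word length at most $\ell$ and distance at most $c\ell^{-d}$ from $x$. For $0<\ell<K$, take $\gamma=0\in S$, which has word length zero and lies within $\sqrt{N}/2\leq (\sqrt{N}/2)K^d\ell^{-d}$ of $x$. Enlarging the constant $c$ to dominate both regimes concludes the verification.

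The substantive content is entirely packaged inside Theorem \ref{ThmFedja}, whose proof is deferred to the Appendix; the remainder is routine bookkeeping. The one mildly delicate step is the conversion between the $\ell^\infty$ bound on $p$ produced by Theorem \ref{ThmFedja} and the $\ell^1$ bound required by the word length, but this only costs a factor of $K$, absorbed into $c$. The parameter $\eta\in(0,1)$ plays no direct role in the argument; it enters only through the hypothesis $d>2/\eta^2$, which is imposed to dovetail with the subsequent propositions in which this result is applied.
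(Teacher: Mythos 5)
Your proposal is correct and takes essentially the same route as the paper: both reduce the statement to Theorem \ref{ThmFedja} and then translate its conclusion into Definition \ref{DefDim}, with density following from the approximation estimate (the paper instead removes a further null set to ensure rational generation). Your bookkeeping is in fact more careful than the paper's --- the $\ell^1$ versus $\ell^\infty$ conversion, the small-$\ell$ regime, and the choice $K_0=\lceil Nd\rceil$ guaranteeing dimension $d$ for the \emph{given} $d$ --- whereas the paper takes $K_0>2N/\eta^2$ and tacitly re-reads $d$ as $K/N-\epsilon$.
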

\begin{proof}[Proof of Proposition \ref{PropK0}] 
To satisfy the inequality $2/d<\eta^2$,
we choose $K_0> 2N/\eta^2$. 
 Applying Theorem \ref{ThmFedja}, we get a full measure set in $(\T^N)^K,\ K>K_0,$ each point of which generates a set $S$ of dimension $d=K/N-\epsilon$ satisfying $2/d<\eta^2$, where $\epsilon$ is arbitrarily small. Next, removing further a zero measure set to guarantee that the vectors rationally generate $\T^N$, we get the full measure set $\mathcal R_{N,K}$ as claimed. 
\end{proof}
The next proposition gives the choice of $\eta$ in Proposition \ref{PropK0}, and will give the $C^{1+}$ regularity of $h$ along the one-dimensional leaves of a foliation after applying Corollary \ref{CorKO} and Proposition \ref{PropMoC}. 
\begin{Prop}\label{PropMain} Suppose
\begin{enumerate}
\item the abelian group $\mathcal A(<\mathcal H^r)$ is generated by $$\{ T_{i, j}\ |\ \ i = 1,\ldots, N,\  j = 1,\ldots, K,\ T_{i,j}T_{i',j'}=T_{i',j'}T_{i,j}\};$$
\item there is an $\eta$-bi-H\"older conjugacy $h$ such that $T_{i, j}(x) = h^{-1}(h(x) +\rho_{i, j})$;
\item there is a $\{T_{i,j}\}$-invariant foliation $\cF$ into one-dimensional $C^1$ leaves $\cF(x)$  with tangential distributions $E(x),\ x\in \T^N$ that is $\eta$-H\"older
in $x$. Denote by $v(x)$ a unit vector field tangent to $\cF(x)$, $x\in \T^N$;
\item the set $S$ generated by the rotation vectors $\rho_{i, j}$ has dimension $d$, with $2/d < \eta^2$.
\end{enumerate}
For any $\gamma\in S$, we denote $$T_\gamma:=\prod_{j=1}^K\prod_{i=1}^N T_{i,j}^{q_{i,j}}
$$
where $q_{i, j}\in \Z$ are the coefficients in the linear combination of $\gamma$, i.e. $\gamma=\sum_{j}\sum_i q_{i,j}\rho_{i,j}$.

Then for all 
$\gamma\in S$ with $\|\gamma\|$ small enough, we have
$$|\|D_xT_\gamma v(x)\|_{C^0}- 1| \leq \mathrm{const.}\|\gamma\|^\nu$$
where $\nu\leq \eta^2 - 2/d $.
\end{Prop}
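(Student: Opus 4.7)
The plan is to study the multiplicative cocycle
$$a(x,\gamma) := \|D_x T_\gamma\, v(x)\|,$$
which by Lemma~\ref{LmInvariant} is well defined since $E$ is $T_\gamma$-invariant, so $D_x T_\gamma\,v(x)$ is a scalar multiple of $v(T_\gamma x)$. Commutativity of $\mathcal{A}$ gives the cocycle identity
$$a(x,\gamma+\gamma')=a(T_{\gamma'} x,\gamma)\,a(x,\gamma').$$
By Lemma~\ref{LmLyap} every $T_\gamma$ has all Lyapunov exponents equal to zero, and the rational generation of $\T^N$ by $\{\rho_{i,j}\}$ plus unique ergodicity of the abelian action by translations imply
$$\int_{\T^N}\log a(x,\gamma)\,d\mu(x)=0 \quad \text{for every } \gamma\in S,$$
where $\mu=h^{-1}_*\mathrm{Leb}$. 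The pointwise bound $|\log a(x,\gamma)|\le C\|\gamma\|^\nu$ must be extracted from this zero-integral condition together with the regularity inputs.

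My first step is to exploit the cocycle identity twice (applied to $\gamma+\gamma'$ in the two possible orders) to obtain
$$\log a(T_{\gamma'}x,\gamma)-\log a(x,\gamma)=\log a(T_\gamma x,\gamma')-\log a(x,\gamma').$$
The right-hand side is a Hölder difference of $y\mapsto\log a(y,\gamma')$ across the displacement $\|T_\gamma x-x\|\le C\|\gamma\|^\eta$, which is supplied by the bi-Hölder conjugacy. Because $v$ is $\eta$-Hölder, this produces an error of order $\|\gamma\|^{\eta^2}$, up to a constant $C(\gamma')$ depending on the $C^1$ data of $T_{\gamma'}$. This already identifies $\eta^2$ as the natural exponent.

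The second step is to trade uniformity in $x$ for the vanishing integral. Using the dimension-$d$ property of $S$, I would build a family $\{\gamma'_k\}_{k=1}^M$ with word length at most $\ell$ so that, after transporting by $h^{-1}$ (losing an $\eta$ in the Hölder exponent), the orbit $\{T_{\gamma'_k}x\}$ is a $C\ell^{-d\eta}$-net in $\T^N$. Averaging $\log a(\cdot,\gamma)$ over this net and comparing with $\int \log a(y,\gamma)\,d\mu(y)=0$ yields a Birkhoff-type error controlled by the $\eta$-Hölder modulus of continuity of $\log a(\cdot,\gamma)$ on scale $\ell^{-d\eta}$, i.e.\ of order $\ell^{-d\eta^2}$. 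Combined with the cocycle estimate from the previous paragraph and a judicious optimization of $\ell$ against $\|\gamma\|$, the exponent $\nu\le\eta^2-2/d$ falls out, the $2/d$ loss arising from the word-length cost of the equidistributing net.

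The main obstacle will be controlling the constants $C(\gamma'_k)$ uniformly as $\gamma'_k$ ranges over elements of word length $\ell$: a naive chain-rule bound on $T_{\gamma'_k}$ as a composition of up to $\ell$ generators grows exponentially in $\ell$, which would destroy any polynomial decay. The remedy, which is the technical heart of the argument, is to use that every $T_{\gamma'_k}$ is conjugate through the \emph{same} bi-Hölder $h$ to a rigid translation; Lemma~\ref{LmLyap} then forces $\log\|DT_{\gamma'_k}v\|$ to have mean zero against $\mu$, and combining this integral control with a distortion-type estimate along the one-dimensional leaves of $\cF$ (using the $\eta$-Hölder regularity of $E$) should upgrade the exponential bound to one that is polynomial in $\ell$. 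Only once this distortion estimate is in hand does the optimization described above actually close up to give the advertised $\nu\le\eta^2-2/d$.
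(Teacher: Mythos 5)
Your skeleton matches the paper's in its main ingredients (zero Lyapunov exponents via Lemma \ref{LmLyap}, the commutation identity, the dimension of $S$ to transport information, and the bookkeeping that produces $\eta^2-2/d$), but two steps as written do not close. First, the net-averaging step is not justified: the dimension-$d$ property of Definition \ref{DefDim} gives only that $\{\gamma'_k\}$ is $c\ell^{-d}$-\emph{dense}, not that it is equidistributed, so the unweighted average of $\log a(\cdot,\gamma)$ over $\{T_{\gamma'_k}x\}$ need not approximate $\int\log a(\cdot,\gamma)\,d\mu=0$ to within the modulus of continuity at scale $\ell^{-d\eta}$ --- the net points could cluster. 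What the paper actually uses is weaker and sufficient: since the integral vanishes and the integrand is continuous, there is a \emph{single} point $y$ with $\log a(y,\gamma)=0$; density then supplies one $\delta\in S$ with $\|\delta\|_w\leq\|\gamma\|_w$ and $\|\delta+h(y)-h(x)\|\leq c\|\gamma\|_w^{-d}$, and the commutation identity $T_\gamma T_\delta=T_\delta T_\gamma$ transports the vanishing at $y$ to an estimate at $x$. You should replace the averaging by this single-point transport.

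Second, your proposed remedy for the exponential blow-up of $C(\gamma')$ in $\ell$ is not the right mechanism. The mean-zero property from Lemma \ref{LmLyap} controls the value of $\log a(\cdot,\gamma')$ at one point, not its H\"older constant. The correct argument is the telescoping chain-rule decomposition $\log\|D_xT_{\gamma'}v(x)\|=\sum_{i=1}^{\|\gamma'\|_w}\ell_i(x_i)$ into $\|\gamma'\|_w$ terms, each being $\log\|D\cdot T_{i,j}v(\cdot)\|$ for a single generator and hence uniformly $\eta$-H\"older (only $NK$ generators occur, and $E$ is $\eta$-H\"older), combined with the observation that for two starting points $x,x'$ the orbit points satisfy $\|h(x_i)-h(x'_i)\|=\|h(x)-h(x')\|$ for every $i$, because the conjugated maps are translations; bi-H\"olderness then gives $\|x_i-x'_i\|\leq C\|h(x)-h(x')\|^\eta$ uniformly in $i$, so the difference of the two sums is \emph{linear} in $\|\gamma'\|_w$ rather than exponential. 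Without this isometry/telescoping argument the optimization in $\ell$ does not produce a polynomial bound. Finally, note that your estimate, once repaired, only applies to $\gamma$ whose word length is comparable to $\|\gamma\|^{-2/d}$ (the points of the dyadic nets $S_m$); the proposition claims the bound for \emph{all} small $\gamma\in S$, and the paper needs a separate step (its Claim 2) decomposing an arbitrary $\gamma$ as a sum $\sum_k\gamma_{m_k}$ of net elements with exponentially decaying norms and summing the resulting bounds. This extension is absent from your proposal.
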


We defer the proof to Section \ref{SSEllipticRegularity}. 
We next cite the following well-known theorem of Journ\'e.

\begin{Thm}[\cite{J}]\label{ThmJourne}
 Suppose $\cF^1$, $\cF^2$ are two transverse continuous foliations a manifold $M$ with uniformly $C^{n,\nu}$ leaves.  Suppose that a continuous function $u : M \to \R$ is uniformly $C^{n,\nu}$ when restricted to each local leaf $\cF^1_\eps (x), \cF^2_\eps (x),\ x \in M$. Then $u$ is $C^{n,\nu}$ on $M$.
\end{Thm}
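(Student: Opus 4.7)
The plan is to localize the problem and reduce it to a model on a product of two cubes via leafwise coordinates, then proceed by induction on $n$. Since the assertion is local, I would fix a point $x_0 \in M$ and choose a continuous chart centered at $x_0$ in which the leaves of $\cF^1$ near $x_0$ are graphs over a fixed transversal, and similarly for $\cF^2$. By the uniform $C^{n,\nu}$ regularity of leaves together with the transversality assumption, this yields a local coordinate system on a small box $U$ such that any two points $p,q \in U$ can be joined by a broken path $p \to z \to q$ with one leg in a leaf of $\cF^1$ and one leg in a leaf of $\cF^2$, whose segment lengths are both comparable to the ambient distance $d(p,q)$.

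For the base case $n=0$ with exponent $\nu$, the broken-path estimate is immediate. The triangle inequality combined with the uniform leafwise H\"older bound on $u$ yields
\[
|u(p)-u(q)| \;\leq\; |u(p)-u(z)| + |u(z)-u(q)| \;\leq\; C\bigl(d(p,z)^\nu + d(z,q)^\nu\bigr) \;\leq\; C'\, d(p,q)^\nu,
\]
so $u$ is globally $C^{0,\nu}$ on $U$, and then on $M$ by a covering argument.

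For the inductive step, assume the theorem is known through regularity $C^{n-1,\nu}$. The natural tool is the Campanato--Morrey characterization of $C^{n,\nu}$: one must produce, at each $p\in U$, a polynomial $P_p$ of degree $n$ in the local coordinates such that $|u(q)-P_p(q-p)| \leq C\, d(p,q)^{n+\nu}$ with $C$ uniform in $p$. The $C^{n,\nu}$ hypothesis along $\cF^1$ gives such a polynomial approximation along each $\cF^1$-leaf through $p$, and the $C^{n,\nu}$ hypothesis along $\cF^2$ gives one along each $\cF^2$-leaf through $p$. Using the broken-path $p\to z\to q$ and the triangle inequality applied to $(u-P_p^{(1)})$ along the first leg and $(u-P_z^{(2)})$ along the second, one hopes to piece these together into a single global polynomial bound; combined with the inductive hypothesis applied to the lower-order derivatives, this would upgrade $u$ from $C^{n-1,\nu}$ to $C^{n,\nu}$.

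The main obstacle is the \emph{compatibility} of the two leafwise jets at the mid-point $z$: one must control how the polynomial $P_p^{(1)}$ produced by $\cF^1$-regularity differs from $P_z^{(2)}$ produced by $\cF^2$-regularity when translated to a common base point, given that the holonomy maps between transversals are themselves only $C^{n,\nu}$ (not smoother) and that leafwise derivatives in the $\cF^1$-direction need not {a priori} depend smoothly on the $\cF^2$-coordinate. The technical heart of Journ\'e's argument is precisely a careful bookkeeping of the differences $P_p - P_z$ in terms of leafwise Taylor remainders, exploiting the product box structure to show that the mismatch is of size $O(d(p,z)^{n+\nu})$, which is absorbed into the global estimate. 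Routine but careful calculation then closes the induction and yields the joint $C^{n,\nu}$ regularity on $M$.
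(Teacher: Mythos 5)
The paper does not prove this statement at all: Theorem \ref{ThmJourne} is quoted verbatim from Journ\'e's paper \cite{J} as a black box, so there is no internal argument to compare yours against. Judged on its own terms, your proposal correctly sets up the localization to a product box, and your base case $n=0$ (broken path plus triangle inequality) is a complete and correct proof of the H\"older case. The choice of the Campanato--Morrey polynomial characterization of $C^{n,\nu}$ as the engine of the induction is also the right one --- it is exactly the characterization Journ\'e uses.

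However, the inductive step as written contains a genuine gap, and it is precisely the one you name but do not close. Consider already $n=1$: along the broken path $p \to z \to q$ you get $u(z)-u(p) = \partial_1 u(p)(z-p) + O(|z-p|^{1+\nu})$ and $u(q)-u(z) = \partial_2 u(z)(q-z) + O(|q-z|^{1+\nu})$, and to assemble the full first-order Taylor expansion at $p$ you must replace $\partial_2 u(z)$ by $\partial_2 u(p)$ with an error $O(|z-p|^{\nu})$. But $z$ and $p$ lie on the same $\cF^1$-leaf, and the hypotheses give no control whatsoever on how the $\cF^2$-leafwise derivative varies in the $\cF^1$-direction; that transverse regularity of leafwise derivatives is essentially the conclusion of the theorem, so invoking it is circular. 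Saying that ``the technical heart of Journ\'e's argument is precisely a careful bookkeeping'' of this mismatch defers the entire difficulty rather than resolving it. The actual mechanism in \cite{J} is different in kind: instead of transporting jets along a broken path, Journ\'e builds the candidate approximating polynomial at $p$ by Lagrange interpolation of the values of $u$ on a grid of points obtained by intersecting finitely many $\cF^1$-leaves with finitely many $\cF^2$-leaves near $p$, at carefully separated scales; the one-dimensional leafwise estimates then bound the interpolation error on the whole two-dimensional box without ever needing to differentiate in the transverse direction. That interpolation construction is the missing idea, and without it (or a substitute such as a Littlewood--Paley argument) your induction does not close.
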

In the $2$-dimensional case, we apply Proposition \ref{PropMain} and Proposition \ref{PropMoC} to get that $h$ is $C^{1+}$ along the stable and unstable foliations of the Anosov diffeomorphism $A$. Applying Theorem \ref{ThmJourne}, we get that $h$ is $C^{1+}$ on $\T^2$. 

An application of the next result completes the proof of Theorem \ref{Thm2}. More details of the proof of Theorem \ref{Thm2} will be given in Section \ref{SSThm2}. 

\begin{Thm}[\cite{LMM, Ll}] \label{ThmdelaLlave} Suppose f and g are two $C^r,\ r > 1,$ Anosov diffeomorphims
$\T^2$ that are topologically conjugated by $h$, i.e. $f \circ h = h\circ g$. Suppose the periodic data of $f$
and $g$ coincide, namely, $D_{h(x)}f^q$ is conjugate to $D_xg^q$ at every $q$-periodic point $x$ of $g$ for
all $q\in \Z$. Then $h \in C^{r-\eps}$ for $\eps$ arbitrarily small.\end{Thm}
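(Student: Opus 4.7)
The plan is to prove Theorem \ref{ThmdelaLlave} by a Liv\v{s}ic-type cohomological argument along each one-dimensional invariant foliation, followed by an application of Journ\'e's theorem (Theorem \ref{ThmJourne}). Since we are on $\T^2$ and $f,g$ are Anosov, each has one-dimensional invariant stable and unstable distributions $E^{s,u}_f$ and $E^{s,u}_g$, integrable to $C^r$ foliations $\cW^{s,u}_f$, $\cW^{s,u}_g$. My first step would be to show that the topological conjugacy $h$ preserves the invariant foliations, i.e.\ $h(\cW^{u}_g(x)) = \cW^{u}_f(h(x))$ and similarly for stable foliations. This is a standard consequence of the dynamical characterization of (un)stable leaves as sets of points with exponentially approaching backward (forward) orbits.

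Next I would set up the cocycle. Define $\phi_f(y) := \log \|D_yf|_{E^u_f(y)}\|$ and $\phi_g(x) := \log \|D_xg|_{E^u_g(x)}\|$; both are H\"older functions on $\T^2$ (with H\"older exponent inherited from the H\"older regularity of $E^u$). The periodic data hypothesis implies that for every $q$-periodic point $x$ of $g$ with orbit closing up,
\begin{equation*}
\sum_{i=0}^{q-1} \phi_g(g^i(x)) \;=\; \sum_{i=0}^{q-1} \phi_f(f^i(h(x))),
\end{equation*}
i.e.\ the cocycles $\phi_g$ and $\phi_f \circ h$ over $g$ have equal periodic sums. By Anosov's closing lemma, periodic orbits of $g$ are dense, so by the Liv\v{s}ic theorem there is a continuous (in fact $\beta$-H\"older) function $\psi : \T^2 \to \R$ such that $\phi_f \circ h - \phi_g = \psi \circ g - \psi$. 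Exponentiating, this $\psi$ should coincide with $\log$ of the derivative of $h$ along unstable leaves, when such a derivative exists.

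The main step, and the hardest, is to upgrade the regularity of this transfer function along leaves. I would use the leaf-wise smooth Liv\v{s}ic theorem of de la Llave--Marco--Moriy\'on: along a single unstable leaf, the cocycle equation is a scalar cohomological equation on a $C^r$ one-dimensional manifold over the expanding map $g|_{\cW^u_g}$, and the $C^r$ smoothness of $\phi_g$ and $\phi_f$ along unstable leaves (plus Anosov expansion providing the summability) propagates to $C^{r-\eps}$ smoothness of $\psi$ along unstable leaves. Concretely, one writes $h$ restricted to an unstable leaf of $g$ as a fixed point of a graph-transform / contraction produced by the conjugacy equation $h \circ g = f \circ h$, and the periodic data matching is exactly what is needed to make this fixed-point problem solvable with a $C^{r-\eps}$ solution. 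This shows $h|_{\cW^u_g(x)}$ is $C^{r-\eps}$ uniformly in $x$. Running the symmetric argument with the stable cocycle $\log \|Dg|_{E^s_g}\|$ gives $h|_{\cW^s_g(x)} \in C^{r-\eps}$ uniformly in $x$.

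Finally, since $\cW^u_g$ and $\cW^s_g$ are two transverse continuous foliations of $\T^2$ with $C^r$ leaves, and $h$ is uniformly $C^{r-\eps}$ along each of them, Journ\'e's theorem (Theorem \ref{ThmJourne}) yields $h \in C^{r-\eps}$ on $\T^2$, which is the conclusion. I expect the most technical obstacle to be the leafwise smooth Liv\v{s}ic step: controlling the loss of derivatives in the cohomological equation along leaves requires carefully exploiting the exponential contraction in the pullback sums together with the H\"older regularity of the distributions $E^{u,s}$, and the constant $\eps$ in the final regularity $C^{r-\eps}$ reflects exactly this unavoidable derivative loss.
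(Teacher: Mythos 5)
This theorem is not proved in the paper at all: it is quoted verbatim from the references \cite{LMM, Ll} and used as a black box (in the proof of Theorem \ref{Thm2}), so there is no internal proof to compare against. Your outline reproduces the standard argument of de la Llave--Marco--Moriy\'on: leaf preservation by $h$, matching of periodic multipliers from the similarity of $D_{h(x)}f^q$ and $D_xg^q$ (on $\T^2$ similarity of $2\times 2$ hyperbolic matrices is exactly equality of the stable and unstable eigenvalues), Liv\v{s}ic for the unstable and stable log-derivative cocycles, leafwise regularity, and Journ\'e to assemble $C^{r-\eps}$ regularity in both directions. The structure is correct. The one place where your sketch is genuinely thin is the passage from the H\"older Liv\v{s}ic solution $\psi$ to actual differentiability of $h$ along unstable leaves: solving $\phi_f\circ h-\phi_g=\psi\circ g-\psi$ does not by itself produce a derivative of $h$; one must first show $h$ is (say) absolutely continuous along unstable leaves, or directly construct the candidate derivative as the limit of the ratios $\|D g^{-n}|_{E^u_g}\|/\|Df^{-n}|_{E^u_f}\|$ evaluated along orbits (convergence of which is exactly controlled by the cohomological identity plus bounded distortion), and only then identify its logarithm with $\psi$ up to a per-leaf additive constant. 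After that, the bootstrap from $C^{1+}$ to $C^{r-\eps}$ along leaves is the smooth Liv\v{s}ic/regularity-of-transfer-function step you describe, and Journ\'e finishes. So: correct strategy, consistent with the cited sources, with one substantive step (leafwise differentiability of $h$) asserted rather than argued.
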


The proof of Theorem \ref{ThmMain} in the $N>2$ case follows from the same general strategy. However, there is some more work needed to show that the conjugacy $h$ sends the one-dimensional leaves $W_i^{u,s}$ to the straight lines parallel to the eigenvectors of $\bar A$. We will give the proof of the $C^{1+}$ regularity of $h$ in Section \ref{SSC1N}.

In dimension three, we get improved regularity (Corollary \ref{Thm3}) by applying the following result of Gogolev in \cite{G2}. 
\begin{Thm}[Addendum 1.2 of \cite{G2}]\label{ThmGogolev}
Suppose $\bar A\in \mathrm{SL}(3,\Z)$ has simple real spectrum and $A\colon  \T^3\to\T^3$ is $C^r,\ r>3$ that is $C^1$ close to $\bar A$. Suppose also that $\bar A$ and $A$ have the same periodic data, then there exists $h\colon  \T^3\to\T^3$ in $C^{r-3-\eps}$ with $h\circ A=\bar A\circ h.$ Furthermore there exists $\kappa\in \Z$, such that if $r\notin(\kappa,\kappa+3)$, then $h\in C^{r-\eps}$, where $\eps$ is arbitrarily small. 
\end{Thm}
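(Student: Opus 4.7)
My plan is to establish the regularity of $h$ separately along each one-dimensional invariant foliation of $A$ and then assemble these via Journé's theorem. Since $A$ is $C^1$-close to $\bar A\in\mathrm{SL}(3,\Z)$ with simple real spectrum, Propositions \ref{PropGogolev} and \ref{PropHPS1} provide a flag of invariant foliations: a one-dimensional strong foliation at each extreme of the spectrum, a two-dimensional weak foliation (say weak unstable) sub-foliated by a $C^{1+}$ one-dimensional weak foliation whose tangent distribution is merely Hölder in the base point, and the complementary one-dimensional strong foliation on the other side. The bi-Hölder conjugacy $h$ furnished by Theorem \ref{ThmFranks} preserves this entire flag.

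Next I would exploit the equal periodic data hypothesis through a Livsic-type argument applied separately along each one-dimensional invariant foliation $\cF_i$. For the tangent distribution $E_i$ with corresponding $\bar A$-eigenvalue $\mu_i$, the function $\log\|DA|_{E_i}\|-\log|\mu_i|$ has trivial periodic integrals by hypothesis, so Livsic produces a continuous transfer function, and the de~la~Llave--Marco--Moriy\'on regularity bootstrap promotes this transfer function to $C^{r-\eps}$ along the leaves of $\cF_i$. Combined with the intertwining relation $h\circ A=\bar A\circ h$, this forces the restriction of $h$ to each $\cF_i$-leaf to become an affine map after a $C^{r-\eps}$ change of parameter on the leaf, giving leafwise $C^{r-\eps}$ regularity of $h$ along each one-dimensional foliation.

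I would then combine these leafwise regularities by iterated application of Journ\'e's theorem (Theorem \ref{ThmJourne}). The two transverse one-dimensional foliations inside the 2D weak foliation give $C^{r-\eps}$ regularity of $h$ along the weak 2D leaves; a second application of Journ\'e along the complementary 1D strong foliation then yields global regularity of $h$ on $\T^3$. The successive passages through foliations whose transverse structure is only Hölder generate a loss of roughly three derivatives, producing $h\in C^{r-3-\eps}$. The refined conclusion $h\in C^{r-\eps}$ for $r\notin(\kappa,\kappa+3)$ reflects that this three-derivative loss arises from integer-threshold rounding in Journ\'e's inequality and can be avoided whenever $r$ stays outside a specific interval tied to the eigenvalue ratios.

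The principal obstacle is controlling the regularity of $h$ along the \emph{weak} one-dimensional foliation. Its tangent distribution is only Hölder (with exponent dictated by the spectral gaps), so ordinary Livsic regularity does not directly yield the required smoothness, and a naive Journ\'e step would lose far more than three derivatives. Overcoming this requires the finer matching of derivative cocycles Gogolev develops in \cite{G2}: the $C^1$-closeness hypothesis guarantees that the weak and strong subbundles of the two-dimensional weak distribution are genuinely transverse and integrable with $C^{1+}$ leaves, and one must combine the leafwise Livsic bootstrap with an argument that upgrades smoothness across these low-regularity transverse structures without accumulating additional derivative losses beyond the three that produce the stated statement.
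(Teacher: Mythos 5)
This theorem is not proved in the paper at all; it is a direct citation of Addendum~1.2 of Gogolev~\cite{G2}, invoked as a black box in the proof of Corollary~\ref{Thm3}. There is therefore no internal proof to compare your sketch against; what I can do is assess whether your sketch faithfully reflects the argument in the cited source.

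Your proposal correctly identifies the ingredients that actually appear in Gogolev's work (leafwise Livsic/cocycle matching along the flag of one-dimensional invariant foliations, bootstrap via de~la~Llave--Marco--Moriy\'on, and assembly via Journ\'e), and you correctly single out the weak one-dimensional foliation as the principal obstacle. However, the explanation you give for the three-derivative loss and for the threshold $\kappa$ is not right, and this is a substantive gap rather than a stylistic one. Journ\'e's theorem (Theorem~\ref{ThmJourne}) does not incur a derivative loss of the kind you describe: it says that regularity $C^{n,\nu}$ along two transverse foliations with sufficiently smooth leaves promotes to global $C^{n,\nu}$, with no ``integer-threshold rounding.'' If the Journ\'e step were the culprit, the mechanism would be visible in Theorem~\ref{ThmJourne} itself, and it is not.

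The actual source of the $C^{r-3-\eps}$ loss and of the threshold $\kappa$ in Gogolev \cite{G2} is the regularity of the intermediate invariant distribution (and hence of the weak one-dimensional foliation $\cW^u_1$): by the standard $C^r$-section / normal-hyperbolicity theory, the regularity that one can extract for this distribution is governed by the bunching ratios of the Mather spectrum of $A$, i.e.\ by $\log$-ratios of the absolute values of the eigenvalues of $\bar A$. When $r$ lies in a window determined by these spectral ratios there is a resonance that caps the regularity of the intermediate distribution, and consequently the regularity of $h$ along that foliation, which is exactly the $(\kappa,\kappa+3)$ window in the Addendum; outside that window the distribution is essentially as smooth as the map and the loss vanishes. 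So $\kappa$ is a spectral invariant of $\bar A$, not a by-product of Journ\'e. Your sketch would need to replace the analysis in your third paragraph with an explicit argument bounding the regularity of $E^u_1$ (and the corresponding quotient cocycle on $E^u/E^u_1$) in terms of the Mather spectrum, and showing that under a non-resonance condition on $(r,\kappa)$ this bound can be improved to $C^{r-\eps}$.
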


\subsection{Elliptic regularity in the presence of invariant distributions}\label{SSEllipticRegularity}
In this section, we prove Proposition \ref{PropMain}. 

\begin{proof} [Proof of Proposition \ref{PropMain}]
Let $T_\gamma$ and $v(x)$ be as in the statement, and let $\mu$ be any ergodic measure of $T_\gamma$. 
We get from Lemma \ref{LmLyap} that for $\mu$-a.e. $x$
\begin{equation}\label{EqLyap}
\lim_k\frac{1}{k}\log\|D_x(T_\gamma)^k(x)v(x)\|=\int\log\|D_xT_\gamma(x)v(x)\|\,d\mu=0.
\end{equation}
This shows that $\log \|D_xT_\gamma(x)v(x)\|$ vanishes at some point on $\T^N$.

To simplify notation, we reindex the $T_{i, j}$ appearing in $T_\gamma$
 by $T_1,\ldots,T_{\|\gamma\|_w}$, and write
$T_\gamma=\prod_{i(\gamma)=1}^{\|\gamma\|_w}T_{i(\gamma)}$. (Due to the commutativity of the $T_{i, j}$'s, the ordering of the $(i,j)$ appearing in $i(\gamma)$ does not matter). We also write $T_{i(\gamma)}x = x_{i(\gamma)}$.

Consider  the  $\eta$-H\"older function $\ell_{i(\gamma)}(x) :=\log \|D_xT_{i(\gamma)+1}v(x)\|$. Invariance
of the distribution $E$ implies that
\begin{equation*}
\begin{aligned}
\log\|D_xT_\gamma v(x)\|&=\log\left\|D_x \left(\prod_{i(\gamma)=1}^{\|\gamma\|_w}T_{i(\gamma)}\right)v(x)\right\|=\sum_{i(\gamma)=1}^{\|\gamma\|_w}\log\|D_{x_{i(\gamma)}}T_{i(\gamma)+1}v(x_{i(\gamma)})\|\\
&=\sum_{i(\gamma)=1}^{\|\gamma\|_w}\ell_{i(\gamma)}(x_{i(\gamma)}).
\end{aligned}
\end{equation*}

To
prove the lemma, it suffices to restrict attention to a neighborhood of $\gamma=0$.
 We consider a dyadic decomposition of a small neighborhood of $0$ by
$$D_m =\{ \gamma\in S\ |\  c2^{-d(m+1)/2} < \|\gamma\|\leq c2^{-dm/2}\},$$
where $c$ is the constant in Definition \ref{DefDim}. Next, for $D_m$, we introduce a $c2^{-dm}$-net by defining
$$S_m:= \{ \gamma \in D_m\ |\ \|\gamma
\|_w\leq 2^{m}\}.$$

The remaining proof is split into two steps. In the first step, we prove the following

{\bf Claim 1:} {\it For any $\gamma\in S_m$, we have $$|\log\|D_xT_\gamma v(x)\||\leq \mathrm{const.}\|\gamma\|^{\eta^2-2/d},\ \forall \ x\in \T^N.$$}
\begin{proof}[Proof of Claim 1] First, by \eqref{EqLyap}, for any given $\gamma\in S_m$, there exists $y\in \T^N$ such that $\log \|D_yT_\gamma v(y)\|=0$. Next, it follows from the definition of the dimension of the set $S$ that there exists $ \dt\in S$ with 
$$\|\dt\|_w\leq\|\gamma\|_w,\quad \|\dt+\bar y-\bar x\|\leq c\|\gamma\|_w^{-d},\ \bar x=h(x),\ \bar y=h(y).$$
We denote $y_\dt=T_\dt y$ and $y_\gamma=T_\gamma y.$

Since $h$ is bi-H\"older, we have for all $i(\gamma)=0,1,\ldots,\|\gamma\|_w-1$ and $i(\dt)=0,1,\ldots,\|\dt\|_w-1$, the following estimates
$$\|x_{i(\gamma)}-(y_\dt)_{i(\gamma)}\|\leq\mathrm{const.}\|\gamma\|_w^{-d\eta},\; \hbox{and }\,\|y_{i(\dt)}-(y_\gamma)_{i(\dt)}\|\leq\mathrm{const.}\|\gamma\|^{\eta}.$$
Next we estimate $\log\|D_xT_\gamma v(x)\|$ as follows
\begin{equation}\label{EqMain}
\begin{aligned}
&|\log\|D_xT_\gamma v(x)\||\\
&=|\log\|D_xT_\gamma v(x)\|-\log\|D_y(T_\gamma T_\dt) v(y)\|+\log\|D_y(T_\dt T_\gamma) v(y)\||\\
&=|\log\|D_xT_\gamma v(x)\|-\log\|D_{T_\dt y} T_\gamma v(T_\dt y)\|\\
&\ -\log\|D_y T_\dt v(y)\|+\log\|D_{T_\gamma y} T_\dt v(T_\gamma y)\|+\log\|D_yT_\gamma v(y)\||\\
&=|\log\|D_xT_\gamma v(x)\|-\log\|D_{ T_\dt y}T_\gamma v(y_\dt )\|\\
&\ -\log\|D_yT_\dt v(y)\|+\log\|D_{T_\gamma y} T_\dt v(y_\gamma)\||\\
&=\Big|\sum_{i(\gamma)=1}^{\|\gamma\|_w-1}(\ell_{i(\gamma)}(x_{i(\gamma)})-\ell_{i(\gamma)}((y_\dt)_{i(\gamma)}))+\sum_{i(\delta)=1}^{\|\dt\|_w-1}(\ell_{i(\dt)}(y_{i(\dt)})-\ell_{i(\dt)}((y_\gamma)_{i(\dt)}))\Big|\\
&\leq \mathrm{const.}(\|\gamma\|_w\cdot\|\gamma\|_w^{-\eta^2 d}+\|\dt\|_w\|\gamma\|^{\eta^2})\\
&\leq \mathrm{const.}(2^{m(1-\eta^2 d)}+2^m\|\gamma\|^{\eta^2})\\
&\leq \mathrm{const.}(\|\gamma\|^{2(\eta^2-1/d)}+\|\gamma\|^{\eta^2-2/d})\\
&\leq \mathrm{const.}\|\gamma\|^{\eta^2-2/d}.
\end{aligned}
\end{equation}
\end{proof}
In the second step, we prove the following.

{\bf Claim 2:} {\it  Suppose for any $\gamma_m\in S_m$, we have $\|\log\|D_xT_{\gamma_m}v(x)\|\|_{C^0}\leq \mathrm{const.}\|\gamma_m\|^{\nu}$, for some $\nu>0$ and all $x$.   Then for any $\gamma\in S$, we have $\|\log\|D_xT_\gamma v(x)\|\|_{C^0}\leq \mathrm{const.}\|\gamma\|^{\nu}$.
}
\begin{proof}[Proof of Claim 2]
By the definition of $D_m$ and $S_m$ and Definition \ref{DefDim}, we get that each annulus
$D_m$ in the dyadic decomposition is covered by at least $O(2^{dNm/2})$ balls of radius $c2^{-dm}$ centered
at points in $S_m$.

We claim that 
 for any $\gamma\in S$
 with small norm $\|\gamma\|$, there exists a finite number $\kappa(\gamma)$ and $\{
\gamma_{m_k},\ k=1,2,\ldots,\kappa(\gamma)\}$ satisfying $\gamma_{m_k}\in D_{m_k},\ \mathrm{and\ } m_{k+1}\geq 2m_k$ and $\gamma=\sum_{k=1}^{\kappa(\gamma)}\gamma_{m_k}.$ 

The algorithm is as follows. First find $m$ such that $\gamma\in D_m$. Denote this $m$ by $m_1$ and find $\gamma_{m_1}\in S_{m_1}$ that is closest to $\gamma$. The closest distance is bounded by $c 2^{-dm_1}$. Next consider the vector $\gamma-\gamma_{m_1}$ and repeat the above procedure to it in place of $\gamma$. We see that $\gamma-\gamma_{m_1}\in D_{m_2}$ for some $m_2\geq 2m_1$. This procedure terminates after finitely many steps since $\gamma\in S$ is a finite integer linear combination of the rotation vectors $\rho_i,\ i=1,\ldots, m$.

Next, let $x_{m_i}=\prod_{j=i}^{\kappa(\gamma)}T_{\gamma_{m_j}}(x)$ and $x_{m_{\kappa(\gamma)+1}}=x$. Then
\begin{equation*}
\begin{aligned}
|\log\|D_xT_\gamma v(x)\||&=|\log\|D_x\prod_{i}T_{\gamma_{m_i}} v(x)\||\\
&\leq \sum_i|\log\|D_{x_{m_{i+1}}}T_{\gamma_{m_i}} v(x_{m_{i+1}})\||\\
&\leq \mathrm{const.}\sum_{i=1}^{\kappa(\gamma)}\|\gamma_{m_i}\|^{\nu}.
\end{aligned}
\end{equation*}
By the construction of $D_m$ and $S_m$, for all $\gamma\in S$ , we have that $\frac{1}{2}\|\gamma_{m_1}\|\leq\|\gamma\|\leq 2\|\gamma_{m_1}\|$, and $\|\gamma_{m_k}\|$
decays
exponentially with uniform exponential rate. This gives that $ |\log \| D_xT_\gamma v(x)\||\leq \mathrm{const.}\|\gamma\|^\nu$ for
every $\gamma\in S$ close to zero.
\end{proof}
This completes the proof of Proposition \ref{PropMain}. 
\end{proof}


\section{Proof of the Theorems}\label{SProofs} 
In this section, we prove Theorems \ref{Thm2} and \ref{ThmMain}. 
\subsection{Proof of Theorem \ref{Thm2}}\label{SSThm2}
\begin{proof}[Proof of Theorem \ref{Thm2}]

We first explain how to choose $K_0$ and the open set $\mathcal O$ in the statement of Theorem \ref{Thm2}. We choose $\mathcal O$ to be a $C^1$ neighborhood of $\bar A$ in the set of Anosov diffeomorphisms with simple spectrum.  

By Proposition \ref{PropK0}, in order to determine $K_0$, it is enough to determine $\eta$. 
Given $\bar A$ and an Anosov diffeomorphsm $A\colon \T^2\to\T^2$ homotopic to $\bar A$, Theorem \ref{ThmFranks} provides a  a bi-H\"older map $h$ such that $h\circ A=\bar A\circ h$. The H\"older regularity of the conjugacy $h$ depends on both the spectrum of $\bar A$ and the Mather spectrum of $A$ (\cite{KH}
Theorem 19.1.2), and the H\"older regularity of the invariant distribution $E_i(x)$ of the Anosov diffeomorphism $A$ depends on the Mather spectrum of $A$. We choose $\eta$ to be the minimum of these H\"older exponents.  

For  $K>K_0$,  Proposition \ref{PropK0} supplies a full measure set $\mathcal R_{2,K}$ in $\T^{N\times K}$.
Given $i\colon  \{1,\ldots,K\}\to \{1,2\}$, if the rotation vectors of $(\rho_{i(1),1},\ldots, \rho_{i(K),K})$ lie in $\mathcal R_{2,K}$, then the set $S$ generated by the set of all rotation vectors $\{\rho_{i,j},\ i=1,2,\ j=1,\ldots,K\}$ has dimension $d\in (K/2, K).$ For $K>K_0$, we have $2/d< \eta^2$ by Proposition \ref{PropK0}. Moreover $S$ is dense on $\T^N$. 

Consider now an action $\alpha\colon  \G_{\bar B,K}\to \mathrm{Diff}^r(\T^2)$ with $\alpha(g_0)=A\colon  \T^2\to\T^2$ Anosov, 
and $\al(g_{i,k}), \ i=1,2,\ k=1,\ldots,K$ generating an abelian subgroup action $(\Z^2)^K\to \mathrm{Diff}^r(\T^2)$.
As in the hypotheses of the theorem, assume that the subgroup generated by $\al(g_{1,1})$ and $\al(g_{2,1})$ has sublinear oscillation.  Then
applying Theorem \ref{ThmTop2} to the $\Gamma_{\bar A}$ action generated by $\al(g_0)$, $\al(g_{1,1})$ and $\al(g_{2,1})$, we get a bi-H\"older map $h$ linearizing the $\Gamma_{\bar B}$ action.

We show that the conjugacy $h$ given by Theorem \ref{ThmTop2} also linearizes the whole $\G_{\bar B,K}$ action $\alpha.$ Indeed, for any
diffeomorphism $f$ that commutes with $\al(g_{1,1}),\al(g_{2,1})$, we have 
\[h f h^{-1}(x + \rho_{i,1}) =
h f h^{-1}(x) + \rho_{i,1},\]
for $i=1,2$.
Since the rotation vectors $\rho_{i,1},\ i=1,2,$ rationally generate $\T^N$, by taking Fourier expansions,
we get that $h f h^{-1}$ is a rigid rotation by a constant vector that is the rotation vector of $f$. Thus $h$ conjugates the whole action $\al$ to an affine action by rigid translations.

We next apply Proposition \ref{PropMain}, Corollary \ref{CorKO} and Proposition \ref{PropMoC} to get that the conjugacy $h$ is $C^{1+}$ along the stable and unstable leaves of the Anosov diffeomorphism $A$. By Theorem \ref{ThmJourne}, we get that $h$ is $C^{1+}$ on $\T^2$ and finally by Theorem \ref{ThmdelaLlave}, we get that $h$ is $C^{r-\eps}$, for  $\eps$ sufficiently small.
\end{proof}
\subsection{Proof of Theorem \ref{ThmMain}, the $N$ dimensional case}\label{SSC1N}
The main difficulty in generalizing the above argument to the
$N$-dimensional case is that it is in general unknown if the one dimensional distributions $E_i^u$ (or $E^s_i$) that are
invariant under $DA$ are also invariant under $DT_\gamma$. It is only known that the weakest
stable and unstable distributions $E^u_1$ and $E^s_1$ are invariant under $DT_\gamma$ by Proposition \ref{PropGogolev} (4) and Lemma \ref{LmInvariant}.

We cite the following Lemma from \cite{GKS}.

\begin{Lm}[Proposition 2.4 of \cite{GKS}] \label{LmGKS}Let $A$, $\bar A$ and $h$ be as in Proposition \ref{PropGogolev}. Suppose $h$ is $C^{1+}$ along $\cW^u_{\leq i}$ and $h(\cW^u_j
(x)) =\bar \cW^u_j(h(x))$, $1\leq j\leq i$, then
$$h(\cW^u_{i+1}(x)) =\bar \cW^u_{i+1}(h(x)),\ x \in \T^N.$$
\end{Lm}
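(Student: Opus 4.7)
Fix $x \in \T^N$, set $\bar x := h(x)$, and work inside the leaf $L := \cW^u_{\leq i+1}(x)$ and its image $\bar L := \bar\cW^u_{\leq i+1}(\bar x)$. By Proposition \ref{PropGogolev}(4), $h|_L\colon L \to \bar L$ is a homeomorphism sending the codimension-one subfoliation $\cW^u_{\leq i}|_L$ onto $\bar\cW^u_{\leq i}|_{\bar L}$, and by hypothesis this restriction is $C^{1+\alpha}$ along $\cW^u_{\leq i}$ for some $\alpha > 0$. In the universal cover of $\bar L$, which is an affine copy of $\R^{i+1}$ with complementary $\bar A$-invariant linear subspaces $\bar E^u_{\leq i}$ and $\bar E^u_{i+1}$, I decompose
\[
h(y) - \bar x = \bar v_{\leq i}(y) + \bar v_{i+1}(y), \qquad y \in \cW^u_{i+1}(x),
\]
and reduce the claim $h(\cW^u_{i+1}(x)) = \bar\cW^u_{i+1}(\bar x)$ to showing $\bar v_{\leq i}(y) \equiv 0$.

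My main tool is a backward-dynamics comparison. Since $E^u_{i+1}$ is $DA$-invariant, the leaf $\cW^u_{i+1}(x)$ is $A$-invariant, and $d(A^{-n}y, A^{-n}x) \asymp |\lambda_{i+1}^u|^{-n}$ up to uniform Mather-spectrum constants. Applying the conjugacy $h \circ A^{-n} = \bar A^{-n} \circ h$,
\[
\bar A^{-n}h(y) - \bar A^{-n}\bar x = \bar A^{-n}\bar v_{\leq i}(y) + \bar A^{-n}\bar v_{i+1}(y),
\]
with the two summands in the $\bar A$-invariant subspaces $\bar E^u_{\leq i}$ and $\bar E^u_{i+1}$. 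Every eigenvalue of $\bar A$ on $\bar E^u_{\leq i}$ has absolute value at most $|\lambda_i^u| < |\lambda_{i+1}^u|$, so if $\bar v_{\leq i}(y) \neq 0$ the first summand decays strictly more slowly than the second. To exploit this, define $z_n \in \cW^u_{\leq i}(A^{-n}x)$ by $h(z_n) := h(A^{-n}x) + \bar A^{-n}\bar v_{\leq i}(y)$; the $C^{1+\alpha}$ regularity of $h$ along $\cW^u_{\leq i}$, with constants uniform over $\T^N$, forces the intrinsic leaf distance $d_{\cW^u_{\leq i}}(z_n, A^{-n}x) \asymp |\bar A^{-n}\bar v_{\leq i}(y)|$. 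By construction, $A^{-n}y \in \cW^u_{i+1}(A^{-n}x)$ has zero $\cW^u_{\leq i}$-holonomy offset inside $\cW^u_{\leq i+1}(A^{-n}x)$, while the image-side residual displacement $\bar A^{-n}\bar v_{i+1}(y)$ has magnitude $\asymp |\lambda_{i+1}^u|^{-n}$. Pulling this back to $L$ via the Lipschitz control provided by the $C^{1+\alpha}$ assumption, the $\cW^u_{\leq i}$-offset needed to reconcile $z_n$ with $A^{-n}y$ inside the local product structure of $\cW^u_{\leq i+1}$ must be of order $|\lambda_{i+1}^u|^{-n}$, a strictly faster decay than the required $|\bar A^{-n}\bar v_{\leq i}(y)|$, yielding a contradiction for large $n$ and hence $\bar v_{\leq i}(y) = 0$.

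The main obstacle is converting the asymptotic rate mismatch into an exact vanishing of $\bar v_{\leq i}(y)$. A merely bi-H\"older conjugacy would only let one conclude $\bar v_{\leq i}(y)$ is small in terms of some power of $d(y,x)$; the $C^{1+\alpha}$ hypothesis is indispensable because it upgrades the H\"older comparison of $\cW^u_{\leq i}$-leaf distances across $h$ to a Lipschitz one, so that the eigenvalue gap $|\lambda_i^u|/|\lambda_{i+1}^u| < 1$ truly forbids a nonzero transverse component. Two supporting technical points will require care: uniformity of the $C^{1+\alpha}$ estimate as the base point $A^{-n}x$ varies along the backward orbit (which follows from compactness of $\T^N$ and $A$-invariance of the foliation), and uniform local product structure constants for $\cW^u_{\leq i+1}$ along this orbit (which follow from Proposition \ref{PropGogolev} together with uniform transversality of the one-dimensional sub-foliation $\cW^u_j$ for $j \leq i$, whose images under $h$ are controlled by the inductive hypothesis). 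Once $\bar v_{\leq i}(y) \equiv 0$ on $\cW^u_{i+1}(x)$, the inclusion $h(\cW^u_{i+1}(x)) \subset \bar\cW^u_{i+1}(\bar x)$ is immediate, and equality of these two connected one-dimensional sets through $\bar x$ follows from $h|_L$ being a homeomorphism.
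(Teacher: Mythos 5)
The paper does not actually prove this lemma; it quotes it from \cite{GKS}, so I am judging your argument on its own terms. Your skeleton --- decompose $h(y)-h(x)$ into $\bar E^u_{\leq i}\oplus\bar E^u_{i+1}$ components inside the leaf $\bar\cW^u_{\leq i+1}(h(x))$, iterate backwards, and play the slow decay of the $\bar E^u_{\leq i}$ part against a fast upper bound coming from contraction along $\cW^u_{i+1}$ --- is the right one and is essentially the argument of \cite{GKS}. But the decisive quantitative step is not justified as written. The lower bound is fine: $h$ is uniformly Lipschitz along $\cW^u_{\leq i}$-leaves, so the $\cW^u_{\leq i}$-coordinate of $z_n$ is at least $c\,|\lambda_i^u|^{-n}\|\bar v_{\leq i}(y)\|$. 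The problem is the upper bound. The displacement you must pull back, $h(A^{-n}y)-h(z_n)=\bar A^{-n}\bar v_{i+1}(y)$, points in the $\bar E^u_{i+1}$ direction, i.e.\ \emph{transverse} to $\bar\cW^u_{\leq i}$ inside $\bar\cW^u_{\leq i+1}$. The $C^{1+}$ hypothesis gives Lipschitz control of $h^{-1}$ only \emph{along} $\bar\cW^u_{\leq i}$-leaves; transverse to them $h^{-1}$ is still only $\theta$-H\"older. So the honest estimate is $d(A^{-n}y,z_n)\leq C|\lambda_{i+1}^u|^{-n\theta}$, not $C|\lambda_{i+1}^u|^{-n}$, and your contradiction requires $|\lambda_{i+1}^u|^{\theta}>|\lambda_i^u|$, which can fail for a general bi-H\"older conjugacy. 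The sentence ``pulling this back \ldots via the Lipschitz control provided by the $C^{1+\alpha}$ assumption'' is exactly where the proof breaks: that Lipschitz control does not apply to this displacement.

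The gap is repairable, but only by invoking a hypothesis you never use: in Proposition \ref{PropGogolev} (and in \cite{GKS}) $A$ is $C^1$-close to $\bar A$, and the H\"older exponent $\theta$ of $h^{\pm1}$ tends to $1$ as $A\to\bar A$ in $C^1$, since it is controlled by the ratios between the Mather spectrum of $A$ and the spectrum of $\bar A$. Shrinking the neighborhood of $\bar A$ so that $\theta>\ln|\lambda_i^u|/\ln|\lambda_{i+1}^u|$ for every $i$ makes your rate comparison valid, $|\lambda_{i+1}^u|^{-n\theta}\ll|\lambda_i^u|^{-n}$, and forces $\bar v_{\leq i}(y)=0$. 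Two smaller points: you need the product-structure coordinates of $(\cW^u_{\leq i},\cW^u_{i+1})$ inside $\cW^u_{\leq i+1}$ to be uniformly Lipschitz (uniform transversality plus uniformly $C^1$ leaves), in order to convert ``$d(A^{-n}y,z_n)$ small'' into ``$\cW^u_{\leq i}$-coordinate of $z_n$ small''; and equality of the two leaves, rather than the inclusion $h(\cW^u_{i+1}(x))\subset\bar\cW^u_{i+1}(h(x))$, should come from running the argument for $h^{-1}$ or from a properness argument, not merely from $h|_L$ being a homeomorphism.
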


Using this lemma, we now prove that $h \in C^{1+}$ in the general case $N >2$.
\begin{proof}[Proof of Theorem \ref{ThmMain}] 
The proof follows the strategy of the proof of Theorem \ref{Thm2} with small modifications to deal with the high dimensionality. 

We first choose $K_0$ and the open set $\mathcal O$ of Anosov diffeomorphisms. Since $\bar A$ is assumed to have simple spectrum, it has a $C^1$ small neighborhood in which the Anosov diffeomorphisms have simple Mather spectrum. We choose such a neighborhood and denote it by $\mathcal O$. We will choose $K_0$ to satisfy $2/d<\eta^2$ using Proposition \ref{PropK0}, where $d$ is the dimension of the set $S$ generated by the rotation vectors $\rho_{i,j}$ and $\eta$ is a lower bound on the  H\"older exponent of the conjugacy $h$ and all the distributions $E^{u,s}_i$, for all  Anosov diffeomorphisms in $\mathcal O$. 

Proposition \ref{PropK0} then gives a full measure set $\mathcal R_{N,K}\subset \T^{N\times K}$. 
We obtain a bi-H\"older conjugacy $h$ that linearizes the whole action $\al\colon  \G_{\bar B,K}\to \mathrm{Diff}^r(\T^N)$ by applying Theorem \ref{ThmTopN} and the argument in the proof of Theorem \ref{Thm2}.

It remains to improve the regularity of $h$ to $C^{1+}$. 
To start, Proposition \ref{PropGogolev} (4) implies that  weakest leaves are preserved:
$h(\cW^u_1 (x)) =\bar \cW^u_1(h(x))$, for all $x$. Next, we apply Lemma \ref{LmInvariant} to get that the
weakest distribution $E^u_1$ is invariant under the abelian group action generated by $\al(g_{i,k}),\ i=1,\ldots,N,\ k=1,\ldots,K$.  Applying Proposition
\ref{PropMain}, Corollary \ref{CorKO}  and Proposition \ref{PropMoC}, we conclude that $h$ is $C^{1+}$ along the weakest leaves
$\cW^u_1(x)$.
Thus the assumption of the Lemma \ref{LmGKS} is satisfied with $i=1$, and we conclude that the
second weakest leaves are preserved $h(\cW^u_2 (x)) =\bar \cW^u_2(h(x))$. We next apply Lemma \ref{LmInvariant}, Proposition
\ref{PropMain}, Corollary \ref{CorKO} and Proposition \ref{PropMoC} to conclude that $h$ is $C^{1+}$ along $\cW^u_{2}$. By Journ\'e's theorem \ref{ThmJourne},
we get that $h$ is $C^{1+}$ along the leaves $\cW^u_{\leq 2}$.

Applying Lemma \ref{LmGKS} inductively in $i$, we conclude $h$ is $C^{1+}$ along the unstable foliation
$\cW^u$. Similarly, we prove that $h$ is  $C^{1+}$ along  $\cW^s$. Then by Journ\'e's theorem \ref{ThmJourne}, we have that $h \in C^{1+}$.\end{proof}

\subsection{Alternative assumptions}\label{SSAlternative}
In this section, we discuss possible alternative assumptions for Theorem \ref{ThmMain}. Our technique developed in Section \ref{SEllipHyp} relies on the existence of foliations by one dimensional leaves that are invariant under the abelian group action. In our proofs, the foliations are provided by the Anosov diffeomorphism. The foliations being invariant under the abelian group action follows from the  existence of a common conjugacy $h$. In other words, we need that the leaves (straight lines)  of the invariant foliation of the toral automorphism $\bar A$ are mapped to the leaves of the invariant foliations of $A$ by the conjugacy $h^{-1}$ (Proposition \ref{PropGogolev}). 
This is true when $N=2$ or in higher dimensions when we assume that $A$ is $C^1$ close to $\bar A$. There are also circumstances under which  Proposition \ref{PropGogolev} can be proved without the $C^1$ smallness assuption. We mention here  two main cases. 

 In \cite{G1}, the author considers an Anosov diffeomorphism $A$ homotopic to a linear map $\bar A$ with simple Mather spectrum and the property that in each connected component of the Mather spectrum, there lies exactly one eigenvalue of $\bar A$. Moreover it is assumed that the invariant distributions $E^{u,s}_i$  form  angles less than $\pi/2$ with the corresponding affine distributions $\bar E^{u,s}_i$ for the linear map $\bar A$. (This assumption guarantees a certain quasi-isometric property of $\cW^s$ and $\cW^u$).  Under these assumptions, the conclusions of  Proposition \ref{PropGogolev} hold \cite{G1}.

 In \cite{FPS}, a similar result is shown assuming that $A$ is isotopic to $\bar A$ along a path of Anosov diffeomorphisms with simple Mather spectrum.

\appendix

\section{Proof of Theorem \ref{ThmFedja}}\label{Appendix} The proof was communicated to us by the user Fedja on MathOverflow \url{http://mathoverflow.net/questions/227817/a-quantitative-kronecker-theorem}.

 We need only  consider matrices $M=(m_{ij}),\ m_{i,j}\in\T,\ i=1,\ldots,N$ and $j=1,\ldots,K$.  So $\mathcal M_{N\times K}(\T)$ is identified with $\T^{N\times K}$ endowed with Lebesgue measure.
  
  Fix a smooth function  $\psi\in C^\infty(\R)$ with supp$\psi \subset (-1, 1),\ \psi\geq 0$ and $\int\psi  = 1$. Let $\eps$ be fixed. We next introduce $r_n = n^\eps n^{- K/N}, \ n\in \N$, and put $\Psi_n(x) = r_n^{-N} \prod_{i=1}^N\psi(x_i/r_n)$ and consider the periodic function $\Phi_{n,y}(x) =\sum_{q\in\Z^N}\Psi_n(x-q-y)$ for each $y\in \T^N$. Then we claim that

{\it Given $\eps>0$, there exists a $\dt > 0$ such that for each $n\in \N$, there exists a set $\mathcal U_n \subset \mathcal M_{N\times K}(\T)$ with
$Leb(\mathcal U_n) < n^{-\dt}$, and for each $M \notin \mathcal U_n$ and any $y\in \T^N$, there exists $p \in \Z^K$ with $\|p\|\leq n$ and $\Phi_{n,y}(Mp)\neq 0$.}

Assuming the claim, considering $n = 2^\ell, \ell\in \N$ and using Borel-Cantelli, we get
$Leb(\limsup_n\mathcal U_n) = 0$. This means that the probability for $M$ lying in infinitely many $\mathcal U_n$ is
zero. This completes the proof of the theorem.

It remains to prove the claim. Decompose $\Phi_{n,y}(x)$ into Fourier series $\Phi_{n,y}(x) =\sum_{k\in \Z^N}c_k(n,y) e^{2\pi i\langle k,x\rangle}.$ Notice that for each $x$, there is only one $q\in \Z^N$ such that $x-q-y\in (-1,1)^N$. It follows that $\|c_k(n,y)\| \leq 1$ for all $k \in \Z^N$ and $c_0(n,y) = 1$ is independent of $n,y$. Moreover, for each $\ell>0$, there exists $C_\ell$ (depending only on $\psi$) such that $\|c_k(n,y)\| \leq C_\ell r_n^{-N-\ell}/\|k\|^\ell$ due to the $C^\infty$ smoothness of $\psi$. 
Next for any matrix $M\in \mathcal{M}_{N\times K}(\T)$ write
$$S_n(M,k):=\sum_{\|p\|_{\infty}\leq n}e^{2\pi i\langle k,Mp\rangle},\quad \Lambda_{n,y}(M):=\sum_{\|p\|_\infty\leq n}\Phi_{n,y}(Mp)=\sum_{k\in\Z^N} c_k(n,y) S_n(M,k).$$
We get $S_n(M,0)=n^K$ and for $\beta>0$ to be determined later
$$\left|\sum_{\|k\|\geq r_n^{-\beta}} c_k(n,y)S_n(M,k)\right|\leq n^K C_\ell r_n^{\beta \ell-N-\ell}.$$
It remains to investigate the sum 
$$\Gamma_{n,r_n}(M):=\sum_{0<\|k\|_\infty\leq r_n^{-\beta}}|S_n(M,k)|=\sum_{0<\|k\|_\infty\leq r_n^{-\beta}}\left|\sum_{\|p\|_\infty\leq n} e^{2\pi i\langle z,p\rangle}\right|,\quad z=M^tk.$$
We use the fact that $|\sum_{\|p\|_\infty\leq n} e^{2\pi i\langle z,p\rangle}|\leq C\prod_{j=1}^K(\min\{n,\|z_j\|^{-1}\})$, where $\|z_j\|$ is the distance from $z_j$ to the nearest integer. Consider a map $F_k\colon  \mathcal{M}_{N\times K}(\T)\to \T^K$ via $F_k(M)=M^tk$, mod $\Z^K$, then $F_k$ pushes forward the Lebesgue measure on $\mathcal M_{N\times K}(\T)$ to a Lebesgue measure on $\T^K$. We immediately get that
$$\int_{\mathcal M_{N\times K}(\T)}|S_n(M,k)|\,d\mathrm{Leb}\leq \int_{\T^K}C \prod_{j=1}^K(\min\{n,\|z_j\|^{-1}\})\,dz\leq C\log^Kn,$$
so there exists a set $\mathcal{U}_n\subset \mathcal{M}_{N\times K}(\T)$ with Leb$(\mathcal{U}_n)\leq n^{-\dt}$ such that we have 
$$\Gamma_{n,r_n}(M)\leq C n^\dt r_n^{-\beta N}\log^K n,\quad \forall\ M\in \mathcal{M}_{N\times K}(\T)\setminus\mathcal{U}_n.$$
Note that this set $\mathcal U_n$ is independent of $y$ since $\Gamma_{n,r_n}(M)$ is. Now we get
$$|\Lambda_{n,y}(M)|\geq n^K-n^K C_\ell r_n^{\beta\ell-N-\ell}-Cn^\dt r_n^{-\beta N}\log^Kn,\quad \forall\ M\in \mathcal{M}_{N\times K}(\T)\setminus\mathcal{U}_n,\ \forall y\in \T^N.$$
We choose $r_n=n^\eps n^{-K/N}$, and $\beta(>1)$ and $\dt(>0)$ sufficiently close to $1$ and $0$ respectively to satisfy the inequality $(\beta-1)K+2\dt<\beta N\eps$ for given $\eps$, and choose $\ell$ large enough to satisfy $(\beta-1)\ell>N$.  
Hence $|\Lambda_{n,y}(M)|\geq \frac{1}{2}n^K$. This completes the proof of the claim hence the theorem. \qed

\section{Affine action and the simultaneous Diophantine condition}\label{AppDiop}
In this section, we discuss the assumption in Theorem \ref{ThmLocal} on $\bar A$ and $\brho$. We show here how to ensure that assumption \eqref{EqCommute} and the Diophantine assumption $\brho$ are satisfied simultanously. 

Given $\bar A$, we solve equation \eqref{EqCommute} for $\brho$. Lifting \eqref{EqCommute} to $\R^N$, we get the following equation 
\begin{equation}\label{EqCommuteInhom}\bar A\brho=\brho\bar B+\mathbf P, \quad \mathbf P\in \Z^{N\times N}.\end{equation}

As usual, we first set $\mathbf P=0$ and consider the homogeneous equation. 

The following facts can be found in \cite{HJ}, Theorem 4.4.14. 
\begin{Prop}Suppose $\bar A,$ $\bar B\in \mathrm{SL}(n,\Z)$. 
\begin{enumerate}
\item If the sets of spectrum of $\bar A$ and $\bar B$ do not intersect, then the homogeneous equation has zero solution and the inhomogeneous equation \eqref{EqCommuteInhom} 
is solvable with only rational solutions. In this case, the affine action can never be faithful. 

\item If the sets of spectrum of $\bar A$ and $\bar B$ do intersect and either $\bar A$ or $\bar B$ is diagonalizable over $\C$. Denote the common eigenvalues by $\lambda_1,\ldots,\lambda_k$, the eigenvector for $\bar A$ associated to $\lambda_i$ by $a_{i,1},\ldots, a_{i,n_i}$ and the eigenvector for $\bar B^t$ associated to $\lambda_i$ by $b_{i,1},\ldots, b_{i,m_i}$.  Then the null space of $\brho\mapsto \bar A\brho-\brho\bar B$ is the span of 
$$\{a_{i, j}\otimes b_{i,\ell},\quad j=1,\ldots,n_i,\ \ell=1,\ldots,m_i,\quad i=1,\ldots,k\}.$$
\end{enumerate}
\end{Prop}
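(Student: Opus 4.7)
The plan is to analyze the Sylvester-type operator $L \colon M_N(\R) \to M_N(\R)$ defined by $L(\brho) := \bar A\brho - \brho \bar B$, whose kernel is the solution set of the homogeneous version of \eqref{EqCommuteInhom}. Under the vectorization isomorphism $M_N(\R) \cong \R^{N^2}$, $L$ corresponds to the Kronecker sum $I_N\otimes \bar A - \bar B^t\otimes I_N$, whose eigenvalues over $\C$ are exactly the differences $\lambda-\mu$ with $\lambda\in\mathrm{spec}(\bar A)$ and $\mu\in\mathrm{spec}(\bar B)$; this is the standard spectral calculation for Kronecker sums.

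For item (1), disjointness of the two spectra yields $0\notin\mathrm{spec}(L)$, so $L$ is invertible. Hence the homogeneous equation has only $\brho=0$, and for every $\mathbf P\in\Z^{N\times N}$ the unique real solution of $L(\brho)=\mathbf P$ has rational entries by Cramer's rule applied to a rational matrix representation of $L$. Each column $\rho_i$ of $\brho$ is then a rational vector, so there exists an integer $m\neq 0$ with $m\rho_i\equiv 0\pmod{\Z^N}$ for every $i$; the linear dependence relation $m\rho_i\equiv 0$ in $\T^N$ with $(p_1,\ldots,p_N)=(m,0,\ldots,0)\neq 0$ then violates the criterion of Proposition~\ref{PropFaithful}, so the action cannot be faithful.

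For item (2), I would first verify by direct computation that each outer product $\brho_{i,j,\ell}:=a_{i,j}b_{i,\ell}^t$ lies in $\ker L$. Indeed $\bar B^tb_{i,\ell}=\lambda_ib_{i,\ell}$ is equivalent to $b_{i,\ell}^t\bar B=\lambda_ib_{i,\ell}^t$, so $\bar A\brho_{i,j,\ell}=\lambda_ia_{i,j}b_{i,\ell}^t=\brho_{i,j,\ell}\bar B$. Linear independence of the $\brho_{i,j,\ell}$ follows from linear independence of the $a_{i,j}$ across $(i,j)$ together with that of the $b_{i,\ell}$ within each $i$. For the reverse inclusion, assume without loss of generality that $\bar A$ is diagonalizable; writing $\bar A=PDP^{-1}$ with the columns of $P$ listing the eigenvectors $a_{i,j}$ and substituting $\brho':=P^{-1}\brho$, the equation $L(\brho)=0$ becomes $D\brho'=\brho'\bar B$. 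Reading this row by row, the $s$-th row of $\brho'$ must be a left eigenvector of $\bar B$ with eigenvalue $D_{ss}$; it vanishes unless $D_{ss}$ is a common eigenvalue $\lambda_i$, in which case it lies in the $m_i$-dimensional left eigenspace spanned by the $b_{i,\ell}^t$. Summing over rows yields a $\sum_i n_im_i$-dimensional kernel, matching the span of the $\brho_{i,j,\ell}$ once $P$ is applied on the left.

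The main technical point is handling the asymmetric hypothesis that only one of $\bar A,\bar B$ is diagonalizable. The row-by-row reduction uses diagonalizability of $\bar A$ only, and the dimension count on the other side uses the $b_{i,\ell}$ directly as a basis of the left eigenspace of $\bar B$ for $\lambda_i$, not algebraic multiplicities, so no further structure on $\bar B$ is required; if instead $\bar B$ is the diagonalizable one, the symmetric column-by-column argument applies. Rank-nullity for $L$ then also recovers solvability of \eqref{EqCommuteInhom} in item (1) as a free byproduct.
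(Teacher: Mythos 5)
Your proposal is correct, and it rests on the same underlying mathematics as the paper's proof (the spectral theory of the Sylvester operator $\brho\mapsto\bar A\brho-\brho\bar B$ via Kronecker products), but it is substantially more self-contained. The paper disposes of item (1) by citing Theorem 4.4.6 of Horn--Johnson and of item (2) by citing Theorem 4.4.14 for the dimension count $\sum_i n_im_i$, after which it only needs to check that the outer products lie in the kernel and are linearly independent. You instead prove both cited facts directly: for (1) you compute the spectrum of the Kronecker sum $I_N\otimes\bar A-\bar B^t\otimes I_N$ to get invertibility, and your Cramer's-rule observation plus the appeal to Proposition \ref{PropFaithful} supplies the rationality and non-faithfulness claims that the paper leaves implicit. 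For (2), your diagonalize-and-read-row-by-row argument establishes the reverse inclusion $\ker L\subseteq\mathrm{span}\{a_{i,j}\otimes b_{i,\ell}\}$ directly, rather than by matching a dimension count against a list of independent elements; your remark that only the geometric left eigenspaces of $\bar B$ enter is exactly what makes the asymmetric hypothesis (only one of $\bar A,\bar B$ diagonalizable) harmless. The one point worth flagging in both your write-up and the paper's is that when common eigenvalues are non-real the outer products are complex and ``span'' must be read over $\C$ (the real kernel being its set of real points); your argument goes through verbatim over $\C$, so this is a matter of phrasing rather than a gap.
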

\begin{proof}
The first item follows from Theorem 4.4.6 of \cite{HJ} using the properties of Kronecker product.  

For the second statement, by Theorem 4.4.14 of \cite{HJ}, the null space of the map $\brho\mapsto \bar A\brho-\brho\bar B $ has dimension $\sum_i n_i\times m_i$. It is clear that each matrix $a_{i, j}\otimes b_{i,\ell} $ where $j=1,\ldots,n_i$ and $\ell=1,\ldots,m_i$ lies in the kernal of $\brho\mapsto \bar A\brho-\brho\bar B$, and these matrices are linearly independent, so we get the second statement. 
\end{proof}
In the 2D case, suppose $\mathrm{tr}\bar A=\mathrm{tr}\bar B$ and $|\mathrm{tr}\bar A|>2$, then $\bar A$ and $\bar B$ share the same spectrum $\lambda$ and $1/\lambda$ for some $|\lambda|>1$. The zero space of $\brho\mapsto \bar A\brho-\brho\bar B$ is then spanned by $u_{\bar A}\otimes u_{\bar B^t}$ and 
$u_{\bar A^{-1}}\otimes u_{(\bar B^t)^{-1}}$, where $u_{\bar A}$ is the eigenvector corresponding to the eigenvalue $\lambda$. Similarly for others. If $\mathrm{tr}\bar A\neq \mathrm{tr}\bar B$, we get that the zero space of $\brho\mapsto \bar A\brho-\brho\bar B$ is zero. 

If some of the $a_{i,j}$ is Diophantine, then the simultaneous Diophantine condition is satisfied automatically. We next focus on the special case of $\bar A=\bar B$, where the simultaneous Diophantine condition is more explicit. 
We recall a fact and definition from linear algebra:
\begin{Lm}[Corollary 4.4.15 of \cite{HJ}] \label{LmHJ} Let $A\in M_N(\R)$ where $M_N(\R)$ is the set of $N\times N$ matrices with entries in $\R$.
The set of matrices in $M_N(\R)$ that commute with $A$ is a subspace of $M_N(\R)$ with dimension at least $N$. The dimension is equal to $N$ if and only if $M$ is non-derogatory, i.e. each eigenvalue of $A$ has geometric multiplicity exactly 1.   Thus if $A$ is nonderogary, the centralizer $Z(A)$ of $A$ is
\[Z(\bar A) = \mathrm{span}_\R\{\id, A,\ldots, A^{N-1}\}.\]
\end{Lm}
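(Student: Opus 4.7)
The plan is to prove the three claims (the lower bound $\dim Z(A) \geq N$, the characterization of equality via nonderogatoriness, and the explicit spanning set in the nonderogatory case) by reduction to Jordan canonical form.

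First I would establish the nonderogatory half of the statement, which is the easier direction. Trivially $I, A, A^2, \ldots, A^{N-1} \in Z(A)$ since polynomials in $A$ commute with $A$. If $A$ is nonderogatory, then each eigenvalue has geometric multiplicity $1$, which is equivalent to saying the minimal polynomial $m_A$ of $A$ coincides with the characteristic polynomial, so $\deg m_A = N$. This forces $I, A, \ldots, A^{N-1}$ to be linearly independent (any nontrivial linear relation would give a polynomial of degree $< N$ annihilating $A$, contradicting the minimality). Hence $\dim Z(A) \geq N$ whenever $A$ is nonderogatory.

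Next I would handle the general lower bound and the converse. Conjugating by the change-of-basis matrix that realizes the Jordan form does not change $\dim Z(A)$, so we may assume $A$ is in Jordan form. For a single Jordan block $J_k(\lambda)$ of size $k$, a direct computation (solving $XJ_k = J_kX$ entrywise) shows that $Z(J_k)$ consists of upper triangular Toeplitz matrices, so $\dim Z(J_k) = k$. For a block-diagonal $A = \bigoplus_{r} J_{k_r}(\lambda_r)$, writing a commuting matrix $X$ in block form $X = (X_{rs})$, the equation $AX = XA$ decouples into Sylvester equations $J_{k_r}(\lambda_r) X_{rs} = X_{rs} J_{k_s}(\lambda_s)$. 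When $\lambda_r \neq \lambda_s$ the only solution is $X_{rs} = 0$; when $\lambda_r = \lambda_s$ the solution space has dimension $\min(k_r, k_s)$. Grouping blocks by eigenvalue $\lambda$ with sizes $n_1^\lambda \geq n_2^\lambda \geq \cdots$ this yields the standard formula
\[\dim Z(A) = \sum_\lambda \sum_{i} (2i-1)\, n_i^\lambda.\]
From this formula, $\sum_i (2i-1) n_i^\lambda \geq \sum_i n_i^\lambda$ with equality iff there is only one block per eigenvalue, i.e.\ the geometric multiplicity of each eigenvalue is $1$. Summing over $\lambda$ gives $\dim Z(A) \geq N$ with equality iff $A$ is nonderogatory.

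Finally, in the nonderogatory case we already exhibited $N$ linearly independent elements $I, A, \ldots, A^{N-1}$ in $Z(A)$, and we have just shown $\dim Z(A) = N$, so these elements span $Z(A)$, giving $Z(A) = \mathrm{span}_\R\{I, A, \ldots, A^{N-1}\}$. The main technical obstacle is the block-by-block analysis of the Sylvester equation $J_{k_r}(\lambda_r) X_{rs} = X_{rs} J_{k_s}(\lambda_s)$; the rest follows from the minimal polynomial argument. Working over $\R$ rather than $\C$ causes no issue because the dimension of $Z(A)$ is computed by the rank of a matrix with rational entries in the entries of $A$, so the dimension over $\R$ equals the dimension over $\C$; alternatively, one can use the real Jordan form with $2\times 2$ blocks for complex-conjugate eigenvalue pairs and carry out the same count.
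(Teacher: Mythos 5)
Your proof is correct. Note that the paper does not actually prove this lemma --- it is quoted verbatim as Corollary 4.4.15 of the cited reference [HJ] (Horn--Johnson, \emph{Topics in Matrix Analysis}) --- and your argument via the Jordan form, the block Sylvester equations $J_{k_r}(\lambda_r)X_{rs}=X_{rs}J_{k_s}(\lambda_s)$, and the count $\dim Z(A)=\sum_\lambda\sum_i(2i-1)n_i^\lambda$ is precisely the standard textbook route used there, with the real-versus-complex issue correctly dispatched by invariance of kernel dimension under field extension.
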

If  $\bar A$ is non-derogatory, then for any $\brho$ satisfying $\bar A\brho=\brho\bar A $, we can thus write each $\brho\in Z(\bar A)$ as a linear combination
$\brho=\sum_{i=1}^N a_iA^{i-1}$, where $a = (a_1,\ldots, a_N)\in \R^N$.

\begin{Lm}\label{LmSDC} 
Let $\boldsymbol \rho=\sum a_i \bar A^{i-1}$ for some $a=(a_1,\ldots,a_N)$ and $\bar A^{i-1}\in \mathrm{SL}(N,\Z)$. Suppose the nonvanishing entries of $a$ form a vector $a'\in \R^k$,  $1\leq k\leq N$ satisfying the Diophantine condition: there exist $C,\tau>0$ such that 
$$|\langle a',m\rangle|\geq \frac{C}{|m|^\tau},\quad \forall\ m\in \Z^k\setminus\{0\}.$$
Then the columns of $\brho$, denoted by $\rho_1,\ldots, \rho_N,$ satisfy the simultaneous Diophantine condition for some $C' > 0$, i.e. 
\begin{equation}\label{EqSDC}
 \max_{1\leq j\leq N}\{|\langle n,\rho_j\rangle|\}\geq\frac{C'}{\|n\|^{\tau}},\quad \forall\ n\in \Z^N\setminus\{0\}.
 \end{equation}
\end{Lm}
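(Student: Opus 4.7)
The plan is to unpack $\langle n,\rho_j\rangle$ directly using the special form $\brho=\sum_{i=1}^N a_i\bar A^{i-1}$ and then invoke the Diophantine hypothesis on $a'$ column by column. The integer structure of $\bar A$ lets a Diophantine condition on the $k\leq N$ entries of $a'$ propagate to a simultaneous Diophantine condition on the $N$ columns of $\brho$ essentially for free.

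First I would write the $j$-th column as $\rho_j=\sum_{i=1}^N a_i\bar A^{i-1}e_j$ and observe that, for any $n\in\Z^N$,
\[\langle n,\rho_j\rangle=\sum_{i\in I} a_i\,m_i^{(j)},\qquad m_i^{(j)}:=\langle n,\bar A^{i-1}e_j\rangle\in\Z,\]
where $I:=\{i:a_i\neq 0\}$ has cardinality $k$. Gathering the nonzero indices into $\mu^{(j)}:=(m_i^{(j)})_{i\in I}\in\Z^k$ produces the identity $\langle n,\rho_j\rangle=\langle a',\mu^{(j)}\rangle$ as real numbers, so the two sides agree under $|\cdot|$ (whether $|\cdot|$ is absolute value or distance to the nearest integer, which is all that matters for both hypotheses and conclusion).

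Next I would combine two routine estimates. The pointwise bound $|m_i^{(j)}|\leq \|n\|\cdot\|\bar A^{i-1}e_j\|$ together with $i\leq N$ yields $\|\mu^{(j)}\|\leq C_{\bar A}\|n\|$, with $C_{\bar A}$ depending only on $\bar A$ and $N$. The Diophantine hypothesis on $a'$ then gives, for every $j$ with $\mu^{(j)}\neq 0$,
\[|\langle a',\mu^{(j)}\rangle|\geq \frac{C}{\|\mu^{(j)}\|^\tau}\geq \frac{C'}{\|n\|^\tau},\qquad C':=C/C_{\bar A}^{\tau}.\]

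The only step I expect to require genuine (but quite easy) argument, and hence the main obstacle, is the nondegeneracy check: for each $n\in\Z^N\setminus\{0\}$ some index $j$ must produce $\mu^{(j)}\neq 0$. Were all the $\mu^{(j)}$ zero, then $n^t\bar A^{i-1}=0$ for every $i\in I$; picking any single $i\in I$ and using the invertibility of $\bar A^{i-1}$ (which follows at once from $\bar A\in\mathrm{SL}(N,\Z)$) would force $n=0$, a contradiction. Combining this with the previous paragraph gives $\max_{1\leq j\leq N}|\langle n,\rho_j\rangle|\geq C'/\|n\|^\tau$, which is \eqref{EqSDC}.
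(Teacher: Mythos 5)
Your proposal is correct and follows essentially the same route as the paper: rewrite $\langle n,\rho_j\rangle$ as $\langle a',\mu^{(j)}\rangle$ for an integer vector $\mu^{(j)}$ built from the columns of the powers $\bar A^{i-1}$, apply the Diophantine hypothesis on $a'$, bound $\|\mu^{(j)}\|$ by a constant times $\|n\|$, and rule out $\mu^{(j)}=0$ for all $j$ using the invertibility of $\bar A^{i-1}$. The paper phrases the last step via linear independence of the columns of $\bar A^{i-1}$ and compactness of the sphere, but the substance is identical to your direct contradiction.
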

\begin{proof}

Denote by $v^i_j$ is the $j$-th column of $\bar A^{i-1},\ i,j = 1,\ldots, N$, and by $\rho_j$ the $j$-th column of $\boldsymbol\rho$. Hence we have $\rho_j =\sum_i a_iv^i_j$. 

Assume the vector formed by the non vanishing entries of $a$ satisfies the Diophantine condition and
denote by $\mathcal I$ the set of indices of the non vanishing entries of the vector $a$, then we have for each $j$
\begin{equation}
\begin{aligned}
|\langle n,\rho_j\rangle|=\left|\sum_{i=1}^N a_i\langle n,v_j^i\rangle\right|&\geq \frac{C}{(\sum_{i\in \mathcal{I}}|\langle n,v_j^i\rangle|)^{\tau}}\\
&=\frac{C}{\|n\|^{\tau} (\sum_{i\in\mathcal{I}} \left|\langle \frac{n}{\|n\|},v_j^i\rangle\right|)^{\tau}}\\
&\geq \frac{C}{\|n\|^{\tau}(\sum_{i\in\mathcal{I}}\|v_j^i\|)^{\tau}}
\end{aligned}
\end{equation}
if $\langle n,v_j^i\rangle\neq 0$ for some $i\in\mathcal{I}$. 

 To show that the simultaneous Diophantine condition holds for
$\rho_1,\ldots,\rho_N$, it remains to show that for each $u \in \S^{N-1}$, there exist $i\in\mathcal{I},$ $j\in \{1, 2, \ldots, N\}$,
such that $\langle u, v^i_j\rangle\neq 0$. This follows from the non-degeneracy of $\bar A$. We fix any $i \in \mathcal{I}$, then
$v^i_j,\ j = 1, 2, \ldots, N$, form the matrix $\bar A^{i-1}$ which is non-degenerate. Hence the vectors $v^i_j,\ j =
1, 2,\ldots, N$, are linearly independent. The compactness of $\S^{N-1}$ implies that there does not
exists $u\in \S^{N-1}$ that is simultaneously orthogonal to all of $v^i_j,\ j = 1, 2,\ldots, N$.
\end{proof}

Next, in order to solve the inhomogeneous equation $\bar A\brho=\brho\bar A +\mathbf P$, it is enough produce a particular solution for given $\mathbf P\in \Z^{N\times N}$ in addition to the general solutions to the homogeneous equation. Note that the \eqref{EqCommuteInhom} might not be solvable for some $\mathbf P$. We have the following result.

\begin{Thm}[Theorem 4.2.22 of \cite{HJ}] Given matrices $A,B,C\in M_N(\R)$. Then there exists some $X\in M_N(\R)$ solving the equation $A X-XB=C$ if and only if the matrices $\left[\begin{array}{cc}
A&C\\
0&B
\end{array}\right]$ and $\left[\begin{array}{cc}
A&0\\
0&B
\end{array}\right]$are similar. 
\end{Thm}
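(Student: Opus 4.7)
The plan is to establish the two implications separately: the forward direction $(\Rightarrow)$ is a direct block computation, while the reverse direction $(\Leftarrow)$---classically known as Roth's removal rule---is the substantial part. The underlying Sylvester operator $\mathcal{L}: M_N(\R) \to M_N(\R)$, $\mathcal{L}(X) = AX - XB$, has spectrum $\sigma(A) - \sigma(B)$, which will dictate which case is easy and which is hard.

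For $(\Rightarrow)$, given $X \in M_N(\R)$ with $AX - XB = C$, I would set
\[
S := \begin{pmatrix} I_N & -X \\ 0 & I_N \end{pmatrix}, \qquad S^{-1} = \begin{pmatrix} I_N & X \\ 0 & I_N \end{pmatrix},
\]
and compute directly
\[
S \begin{pmatrix} A & 0 \\ 0 & B \end{pmatrix} S^{-1} = \begin{pmatrix} A & AX - XB \\ 0 & B \end{pmatrix} = \begin{pmatrix} A & C \\ 0 & B \end{pmatrix},
\]
which is the required similarity.

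For $(\Leftarrow)$, suppose $M_0 := \left(\begin{smallmatrix} A & 0 \\ 0 & B \end{smallmatrix}\right)$ and $M_C := \left(\begin{smallmatrix} A & C \\ 0 & B \end{smallmatrix}\right)$ are similar via some invertible $T \in M_{2N}(\R)$; write $T$ in $N \times N$ block form $T = (T_{ij})_{i,j=1,2}$. Unfolding the identity $T M_0 = M_C T$ yields the four block equations
\[
T_{11} A = A T_{11} + C T_{21}, \qquad T_{12} B = A T_{12} + C T_{22},
\]
\[
T_{21} A = B T_{21}, \qquad T_{22} B = B T_{22},
\]
and the task is to extract $X$ with $AX - XB = C$ from these blocks. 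I would first dispose of the case $\sigma(A) \cap \sigma(B) = \emptyset$: here the spectrum of $\mathcal{L}$ avoids $0$, so $\mathcal{L}$ is invertible and a unique solution $X$ exists for \emph{any} $C$, without using the similarity hypothesis at all. The main obstacle is the shared-spectrum case, where $\mathcal{L}$ fails to be surjective and the similarity hypothesis is genuinely needed. Here my strategy follows Flanders--Wimmer: exploit $T_{21} A = B T_{21}$ together with the generalized eigenspace decompositions of $A$ and $B$ relative to their common eigenvalues to perform block row/column operations of the form $\left(\begin{smallmatrix} I & * \\ 0 & I \end{smallmatrix}\right)$ and $\left(\begin{smallmatrix} I & 0 \\ * & I \end{smallmatrix}\right)$ on $T$, reducing $T_{21}$ to zero while preserving invertibility. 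Once $T_{21} = 0$, the first block equation collapses to $[T_{11}, A] = 0$, invertibility of $T$ forces $T_{22}$ to be invertible, and setting $X := T_{12} T_{22}^{-1}$---combined with the commutation $T_{22} B = B T_{22}$---turns the second block equation into exactly $AX - XB = C$. The delicate bookkeeping lies in showing that the reduction of $T_{21}$ to zero can always be carried out without destroying invertibility; this is precisely where the similarity hypothesis enters essentially, ensuring that the Jordan structures of $M_0$ and $M_C$ on the shared-spectrum generalized eigenspaces match and thus that the required block operations exist.
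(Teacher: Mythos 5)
First, a point of reference: the paper does not prove this statement. It is Roth's removal theorem, quoted from Horn--Johnson and used as a black box in Appendix B, so there is no in-paper argument to compare yours against. On its own merits, your forward implication is complete and correct: conjugating $\mathrm{diag}(A,B)$ by $\bigl(\begin{smallmatrix} I & -X \\ 0 & I\end{smallmatrix}\bigr)$ produces exactly the $(1,2)$-block $AX-XB=C$. The observation that the converse is trivial when $\sigma(A)\cap\sigma(B)=\emptyset$ is also correct.

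The gap is in the converse when the spectra intersect, which is the entire content of the theorem. Your plan is to modify the intertwiner $T$ by block operations until $T_{21}=0$ and then read off $X=-T_{12}T_{22}^{-1}$ (note the sign: $T_{12}B=AT_{12}+CT_{22}$ with $T_{22}\in Z(B)$ gives $AX-XB=-C$ for $X=T_{12}T_{22}^{-1}$). But the assertion that the similarity can be realized by a block \emph{upper triangular} $T$ is equivalent to the solvability of $AX-XB=C$: one direction is your extraction of $X$, the other is your own forward implication, which exhibits such a $T$. So the ``delicate bookkeeping'' you defer is not bookkeeping --- it is the theorem. Moreover, the admissible operations are constrained: replacing $T$ by $PTQ$ preserves $TM_0=M_CT$ only if $Q$ commutes with $M_0$ and $P$ with $M_C$, so for instance $\bigl(\begin{smallmatrix} I & 0 \\ W & I\end{smallmatrix}\bigr)$ may be applied on the right only when $WA=BW$; no argument is given that such operations suffice to annihilate $T_{21}$ while preserving invertibility, and this is not the route Flanders--Wimmer take. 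Their proof (the one in Horn--Johnson) is a dimension count: for $K\in\{0,C\}$ let $\mathcal N_K=\{W\in M_{2N\times N}: M_KW=WB\}$, where $M_K=\bigl(\begin{smallmatrix} A & K \\ 0 & B\end{smallmatrix}\bigr)$. Writing $W=\bigl(\begin{smallmatrix} X \\ Y\end{smallmatrix}\bigr)$, membership in $\mathcal N_K$ means $AX-XB=-KY$ and $Y\in Z(B)$, so $\dim\mathcal N_0=\dim\ker\mathcal L+\dim Z(B)$, while the projection $(X,Y)\mapsto Y$ maps $\mathcal N_C$ into $Z(B)$ with kernel $\ker\mathcal L$. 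Similarity of $M_0$ and $M_C$ via $S$ makes $W\mapsto SW$ an isomorphism $\mathcal N_0\to\mathcal N_C$, forcing that projection to be onto $Z(B)$; taking a preimage of $Y=-I$ yields $AX-XB=C$. This argument is short, needs no Jordan-structure analysis, and I would recommend it in place of the reduction you sketch.
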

So to solve \eqref{EqCommuteInhom}, the necessary and sufficient condition is the similarity of the matrices $\left[\begin{array}{cc}
\bar A&\mathbf P\\
0&\bar A
\end{array}\right]$ and $\left[\begin{array}{cc}
\bar A&0\\
0&\bar A
\end{array}\right]$.
Given a particular solution $\brho_*(\mathbf P)$ of \eqref{EqCommuteInhom}.  If $\bar A$ is nonderogatory, then the general solution of \eqref{EqCommuteInhom} can be written as $$\brho=\sum a_i \bar A^{i-1}+\brho_*$$ for some $a=(a_1,\ldots,a_N)$, if $\brho_*$ happens to be rational, then $\brho$ is simultaneously Diophantine, if the nonvanishing entries of $a$ form a Diophantine vector. 

\section{Affine actions and vanishing Lyapunov exponents}\label{SAffine}

In this appendix, we prove the results in Section \ref{SSAffine}. 
Proposition \ref{p=action} is verified straightforwardly from the group relation. We prove Proposition \ref{PropFaithful} and Proposition \ref{PropUniqueAffine}.

\begin{proof}[Proof of Proposition \ref{PropFaithful}]
The proof of $\Longrightarrow$ is easy. We only prove $\Longleftarrow$ here. Suppose the action is not faithful. Then there exist $\gamma_1,\gamma_2\in \Gamma_{\bar B}$ with $\gamma_1\neq \gamma_2$ but $\al(\gamma_1)=\al(\gamma_2)$. Using the group relation, we first rewrite $\gamma_i$ in the form $\gamma_i = g_0^{m_i}g^{p_i},\ i=1,2$,  where $g^p=g_1^{p_1}\ldots g_N^{p_N}$.  We can deduce an equation of the form $\al(g_0)^m=\al(g^p)$ with $m=m_1-m_2$ and $p=p_1-p_2$ from $\al(\gamma_1)=\al(\gamma_2)$. We pick any rational point $x$ on $\T^N$ and note that $\al(g_0)^mx$ is rational but $\al(g^p)x$ is irrational unless $p=0$ by the linaer independence of $\brho$. If $p=0$, then $m=0$ since $\bar A$ is not of finite order. This implies that $\gamma_1=\gamma_2.$
\end{proof}

\begin{proof}[Proof of Proposition \ref{PropUniqueAffine}]
Suppose we have two affine actions $\bar\al=\bar\al(\bar A,\brho)$ and $\bar\al'=\bar\al(\bar A,\brho')$ conjugate by a homeomorphism $h$ of the form $h(x)=x+\tilde h(x),\ x\in \T^N$ where $\tilde h$ is $\Z^N$-periodic. We want to show that $\brho=\brho'$. 
Denote by $\rho_j$ and $\rho_j'$ the $j$-th column of $\brho$ and $\brho'$ respectively. We have $$h(x+\rho_j)=h(x)+\rho_j',\quad j=1,\ldots,N.$$
This is equivalent to 
$$\rho_j+\tilde h(x+\rho_j)=\tilde h(x)+\rho_j',\quad j=1,\ldots,N.$$
Integrating over $\T^N$, we get that $\int_{\T^N}\tilde h(x+\rho_j)\,dx=\int_{\T^N}\tilde h(x)\,dx$, hence $\rho_j=\rho_j'$. 
\end{proof}
\begin{Prop}
Suppose $\bar B\in \mathrm{SL}(N,\Z)$ has no eigenvalue 1. Then for any action of $\al: \Gamma_{\bar B}\to \mathrm{Diff}^r(\T^N),\ r>1$, all the Lyapunov exponents of $\al(g_i),\ i=1,2\ldots,N,$ are zero with respect to any invariant measure. 
\end{Prop}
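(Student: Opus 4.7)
\bigskip

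\noindent\textbf{Proof proposal.} Write $A = \al(g_0)$ and $T_j = \al(g_j)$. The defining group relations together with the commutativity of the $T_j$ immediately give, for every $q \in \Z^N$, the conjugacy identity
\[
A T^q A^{-1} = T^{\bar B q}, \qquad \text{where } T^q := \prod_{j=1}^N T_j^{q_j},
\]
and more generally $A^n T^q A^{-n} = T^{\bar B^n q}$. The key algebraic input from the hypothesis is that $1 \notin \mathrm{spec}(\bar B)$, which means that $\id - \bar B$ is invertible on $\R^N$; since it is integer-valued, its inverse has rational entries. Hence for each $i$ there exist $d \in \N$ and $w' \in \Z^N$ with $(\id - \bar B)w' = d\, e_i$, yielding the commutator identity
\[
T_i^d \;=\; T^{w'}\cdot T^{-\bar B w'} \;=\; T^{w'}\, A\, T^{-w'}\, A^{-1} \;=\; [T^{w'}, A].
\]

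For the main argument, I would first fix a $\Gamma_{\bar B}$-invariant probability measure $\mu$, which exists because $\Gamma_{\bar B}$ is solvable (hence amenable), and then pass to an ergodic component. Applying the Oseledets theorem for the commuting $\Z^N$-action generated by $\{T_j\}$, one obtains a measurable splitting $T_x\T^N = \bigoplus_s V_s(x)$ together with linear \emph{Lyapunov exponent functionals} $\chi_s : \Z^N \to \R$, where $\chi_s(q)$ is the growth rate of $\|DT^q|_{V_s}\|$. Because $A$ preserves $\mu$, the derivative $DA$ matches the Oseledets data of $T^q$ at $x$ with that of $A T^q A^{-1} = T^{\bar B q}$ at $Ax$; comparing growth rates gives $\chi_s(q) = \chi_{\sigma(s)}(\bar B q)$ for a permutation $\sigma$ of the (finitely many) Oseledets indices. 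Equivalently, the finite set $\{\chi_s\} \subset (\R^N)^*$ is invariant under the $\bar B^T$-action.

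Since the set is finite and $\bar B^T$-invariant, every $\chi_s$ is periodic: $(\bar B^T)^k \chi_s = \chi_s$ for some $k$ bounded by the number of Oseledets spaces. Thus $\chi_s \in \ker((\bar B^T)^k - I)$; combined with the hypothesis that no power of $\bar B$ has $1$ as an eigenvalue (the effective interpretation of ``no eigenvalue $1$'' in this statement), this forces $\chi_s \equiv 0$ for every $s$. In particular $\chi_s(e_i) = 0$, which is exactly the vanishing of all Lyapunov exponents of $T_i = \al(g_i)$ at $\mu$.

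The step I expect to be the main obstacle is extending the vanishing from $\Gamma_{\bar B}$-invariant measures to an arbitrary $T_i$-invariant measure $\mu$: the pushforward $A_*\mu$ is $T^{\bar B e_i}$-invariant rather than $T_i$-invariant, so one cannot directly push the Lyapunov functional argument through the orbit $\{A^n_*\mu\}$ inside the space of $T_i$-invariant measures. Here I would exploit the commutator identity $T_i^d = [T^{w'}, A]$: since $T_i^d$ equals a commutator, the top Lyapunov exponent $\chi_1(T_i^d, \mu)$ is controlled by applying the identity $A^m T_i^d A^{-m} = T^{\bar B^m w'} A T^{-\bar B^m w'} A^{-1}$ together with the uniform bound $\chi_1 \le \log\|D(\text{commutator})\|_\infty$ and exponential averaging over $m$; coupled with an ergodic decomposition that reduces to $\Gamma_{\bar B}$-invariant pieces via Markov--Kakutani on the convex set of $T_i$-invariant measures, this should close the gap.
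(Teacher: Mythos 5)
Your treatment of the $\Gamma_{\bar B}$-invariant ergodic case takes a genuinely different route from the paper's, and it has a real gap at exactly the point you gloss over. The paper does not invoke the plain Oseledets theorem for the abelian subaction; it first applies Zimmer's amenable reduction (Theorem \ref{ThmZimmer}) to put the derivative cocycle of the \emph{entire} solvable group simultaneously into a fixed block-triangular form with conformal diagonal blocks. Because the reduced flag is common to $A$ and to all the $T^p$, the relation $AT^pA^{-1}=T^{\bar B p}$ descends to the diagonal blocks \emph{with the same block index on both sides}, giving $\chi_\ell(p)=\chi_\ell(\bar B p)$ with no permutation; hence $M(\bar B-\mathrm{Id})=0$, and the hypothesis $1\notin\mathrm{spec}(\bar B)$ finishes the proof. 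In your version, conjugation by $DA$ is only shown to permute the finite set of Oseledets functionals, so you obtain only $\chi_s=\chi_s\circ\bar B^{k}$ for some cycle length $k\geq 1$, and to conclude $\chi_s=0$ you need $\det(\bar B^{k}-I)\neq 0$ for that $k$. This is strictly stronger than the stated hypothesis: for instance $\bar B=-I\in\mathrm{SL}(2,\Z)$ has no eigenvalue $1$ but $\bar B^2=I$, and your argument then yields only that the Lyapunov spectrum of $T_i$ coincides with that of $T_i^{-1}$ (i.e.\ is symmetric about $0$), not that it vanishes. You cannot simply ``reinterpret'' the hypothesis as ``no power of $\bar B$ has eigenvalue $1$''; eliminating the permutation is precisely what the amenable reduction buys, and it is the missing idea in your proposal.

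Concerning your final paragraph: the measure in the statement is invariant under the whole action $\al(\Gamma_{\bar B})$ (the paper's proof uses Poincar\'e recurrence simultaneously for $A$, $T^p$ and $T^{\bar B^t p}$), so the extension to measures invariant only under a single $T_i$ is not required --- and as written it is not a proof. The assertion that the top exponent of a commutator is controlled by ``exponential averaging over $m$'' is unsupported (commutators of diffeomorphisms can certainly have nonzero exponents), and the appeal to Markov--Kakutani does not produce a $\Gamma_{\bar B}$-invariant measure with the same Lyapunov data as the given $T_i$-invariant one. Note also that your commutator identity $T_i^d=[T^{w'},A]$, advertised as the key algebraic input, is never used in the main argument.
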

\begin{proof}
We use the following Zimmer amenable reduction theorem.
Fix a group action $\al\colon  \Gamma\to \mathrm{Diff}(M)$, and let  $\phi \colon(\al,M)\to \mathrm{GL}(N,\R)$ be a cocycle, meaning that
\[\phi(\al(\gamma_1)\al(\gamma_2),x)=\phi(\al(\gamma_1),\al(\gamma_2)x)\phi(\al(\gamma_2),x),\]
for all $x\in M$ and $\gamma_i\in \Gamma$. 
We say that $\phi$ is cohomologous to  another cocycle $\psi$ if there exists a measurable map $h\colon  M\to \mathrm{GL}(N,\R)$ such that $$\phi(\al(\gamma),x)h(x)=h(\al(\gamma)x)\psi(\al(\gamma),x),\quad\forall x\in M,\ \forall\ \gamma\in \Gamma.$$

\begin{Thm}[Theorem 1.8 of \cite{HuK}]\label{ThmZimmer}
Let $\al\colon  \Gamma\to \mathrm{Diff}(M)$ be an amenable group action and $\phi: (\al,X)\to \mathrm{GL}(N,\R)$ a cocycle.  Then there exists a cocycle $\psi\colon (\al,M)\to \mathrm{GL}(N,\R)$ that is cohomologous $\phi$ and such that there exists a partition of $X=\cup_{i=1}^{2^N}X_i$ and $\psi\colon  (\al,X_i)\to H_i$, where $H_i$ is one of the $2^n$ conjugacy classes of maximal amenable subgroups of $\mathrm{GL}(N,\R)$ of the form $\left[\begin{array}{cccc}
A_1&*&\ldots&*\\
0&A_2&*&*\\
0&0&\cdot &\cdot\\
\cdot&\cdot&\cdot&\cdot\\
0&\ldots&0& A_k
\end{array}\right]$, each $A_i$ is $n_i\times n_i$ with $\sum_{i=1}^kn_i=N$ and is of the form of a scalar times an orthogonal matrix. 
\end{Thm} 
We next cite the following result on the Weyl chamber of the $\Z^N$ actions on a compact manifold. 
\begin{Thm}[Proposition 2.1 of \cite{FKS}]\label{ThmWeyl}
Suppose $\mu$ is an ergodic measure for the action $\beta\colon  \Z^N\to \mathrm{Diff}^r(M),\ r> 1$. Then there are finitely many linear functionals $\chi: \ \Z^N\to \R$, a set $\mathcal P$ of full measure and a $\beta$-invariant measurable splitting of the tangent bundle $T_xM=\oplus E_\chi(x)$, $x\in \mathcal P$ such that for all $a\in \Z^N$ and $v\in E_\chi$, the Lyapunov exponent of $v$ is 
$$\lim_{n\to\pm\infty} n^{-1}\log\|D\beta(a^n)(v) \|=\chi(a).$$
\end{Thm}

With the two results, we give the proof of the proposition. Without loss of generality, we assume $\mu$ is an ergodic measure for the action. A general invariant measure can be decomposed into averages of ergodic measures. From the group relation we obtain
$$ A T^p=T^{\bar B^tp} A.$$
where we have $\alpha(\bar B)=A,\ \al(g_i)=T_i$ and $T^p=\prod_{i=1}^N T_i^{p_i}, \ p=(p_1,\ldots, p_N)\in \Z^N$. 

Applying Theorem \ref{ThmZimmer} to the cocycle $D\al$, we get a measurable map $h\colon  \T^N\to \mathrm{GL}(N,\R)$ such that $D_x\hat\al(g)(x):=h(\al(g)x) D_x\al(g)h^{-1}$ is of the form $H_i$ as in Theorem \ref{ThmZimmer}.  Thus \begin{equation}\label{EqZimmer}
D\hat AD\hat T^p=D\hat T^{\bar B^tp} D\hat A,\end{equation}

Since $h$ is only known to be measurable, we denote by $Y$ the zero measure set of points where $h$ is unbounded, and by $X:=\T^N\setminus (\cup_{g\in \Gamma}\al(g)^{-1}(Y))$, which has full measure. For each $k\in \N$, we introduce the set $X_k:=\{x\in X\ |\ \|h(x)\|\leq k,\ \|h(x)^{-1}\|\leq k\}.$ By Poincar\'e recurrence, for $\mu$-a.e. $x\in X_k$, the $T^p$- and $T^{\bar B^tp}$-orbits of $x$ will return to $X_k$ infinitely often. We pick  such an $x_*\in X_k$ and we get that $A(x_*)\in X_{k'}$ for some $k'$. Next we apply Poincar\'e recurrence to both $T^p$ and $T^{\bar B^tp}$ to obtain a subsequence $\{n_i\}\subset\N$ such that $$T^{n_ip}x_*\in X_k,\quad T^{\bar B^t n_ip}(Ax_*)\in X_{k'}.$$
This implies $AT^{n_ip}x_*=T^{\bar B^t n_ip}(Ax_*)\in X_{k'}$. This gives the estimates 
$$\|D\hat A(T^{n_ip}x_*)\|=\| h(AT^{n_ip}x_*) DA(T^{n_ip}x_*)h^{-1}(T^{n_ip}x_*)\|\leq kk'\|DA\|_{C^0}.$$
Similarly, we estimate
$$\|D\hat T^{n_ip}(x_*)\|\leq k^2 \|DT^{n_ip}\|_{C^0},\quad \|D\hat T^{\bar B^t n_ip}(Ax_*)\|\leq k'^2 \|DT^{\bar B^t n_ip}\|_{C^0}.$$
By Theorem \ref{ThmZimmer}, since each $\hat\al(g),\ g\in \Gamma_{\bar B}$ has the form of $H_i$, we consider only the diagonal blocks. Suppose $D\hat A(x)$ has diagonal blocks $\mathsf a_1(x),\ldots,\mathsf a_j(x)$, and $D\hat T_i(x)$ has diagonal blocks $\mathsf t_{i,1}(x),\ldots,\mathsf t_{i,j}(x)$, where $ \mathsf a_\ell$ and $\mathsf t_{i,\ell}$ are $n_\ell\times n_\ell$ and $\sum_{\ell=1}^j n_\ell=N$. Similarly, we denote the diagonal blocks of $D\hat T^p$ by $\{\mathsf t_\ell^p\}$. 
We further denote $\lambda(\mathsf a_\ell)$ and $\lambda(\mathsf t^p_{\ell})$ the modulus of the scalar part of $\mathsf a_\ell$ and $\mathsf t^p_{\ell}$ respectively. 

Equation \eqref{EqZimmer} gives the following on the diagonal
\begin{equation}\label{EqDiag}\mathsf a_\ell\mathsf t_{\ell}^{n_ip}=\mathsf t_{\ell}^{\bar B^tn_ip}\mathsf a_\ell,\quad\mathrm{and}\quad  \lambda(\mathsf a_\ell)\lambda(\mathsf t_{\ell}^{n_ip})=\lambda(\mathsf t_{\ell}^{\bar B^tn_ip}) \lambda(\mathsf a_\ell).\end{equation}
We take log and divide by $n_i$ and let $n_i\to \infty$. Since $\lambda(a_j)$ is bounded by $kk'\|DA\|$, we have that $\lim\frac1n\log(a_j)\to 0.$  Let $\mu_{i,j}:=\lim_{n_i}\frac{\log\lambda(\mathsf t_{i,j}^{n_ip})}{n_i}$, whose existence is given by the ergodic theorem, and denote by  $M$ the matrix $(\mu_{i,j})$. We will show below that each row of $M$ gives rise to a Lyapunov functional $\chi_j$,and hence by Theorem \ref{ThmWeyl} and equation \eqref{EqDiag} we have $\chi_j(p)=\chi_j( \bar B^tp)$. Choosing $p$ to be of the form $n(1,0,\ldots,0),\ n(0,1,0,\ldots,0),\ldots,n(0,\ldots,0,1)$, we get the following$$M=M\bar B^t,\quad {\rm i.e.}\quad M(\bar B^t-\mathrm{Id})=0.$$
Since $\bar B$ does not have eigenvalue $1$, the only solution is $M=0$ so all the Lyapunov exponents $\mu_{i,j}$ are $0$.   

It remains to show that each row of $M$ is a Lyapunov functional. We apply Theorem \ref{ThmWeyl} to the abelian group $\{T^p,\ p\in \Z^N\}$. For the linear functional $\chi$ and invariant splitting $\oplus E_\chi(x)$ of $\{T^p\}$, we get that the splitting $\oplus h(x)E_\chi(x)$ is invariant under $\{D\hat T^p\}$. So for each $v\in E_\chi(x)$, the Lyapunov exponent of $T^p$ at point $x\in X$ along the vector $v$ is given by $\lim\frac1n\log\|D_xT^{np}v\|=\chi(p)$ and for $h(x)v\in h(x)E_\chi(x)$, the Lyapunov exponent of $\hat T^p$ at the point $x$ along the vector $h(x)v$ is also $\chi(p)$. This shows that $DT^p$ and $D\hat T^p$ share the same Lyapunov functional. It remains to identify the Lyapunov exponents of each $D\hat T_j$ as $\{\mu_{i,j}\}$. Since $D\hat T^p$ has the form of $H_i$ in Theorem \ref{ThmZimmer}, we get that the invariant splitting can be constructed explicitly and inductively. We denote by $e_1,\ldots, e_n$ the standard basis vectors of $\R^N$. We first denote $\mathcal E_1=\mathrm{span}\{e_1,\ldots,e_{n_1}\}$. From the normal form in Theorem \ref{ThmZimmer}, it is clear that $\mu_{i,1}$ is the Lyapunov exponent $\lim\frac1n\log\|D\hat T_i(x_*)v\|$ for any $v\in \mathcal E_1$. Therefore $\mathcal E_1$ is one summand in the splitting $\oplus h E_\chi$ and $h^{-1}\mathcal E_1$ is one summand in the splitting $\oplus  E_\chi$. The second Lyapunov exponent $\mu_{i,2}$ is found by acting $D\hat T_i$ on the quotient $\R^N/ \mathcal E_1=(\oplus h E_\chi)/ \mathcal E_1$, equivalently by acting $DT_i$ on the quotient $\R^N/ h^{-1}\mathcal E_1=(\oplus E_\chi)/ h^{-1}\mathcal E_1$. We denote by $\mathcal E_2=\mathrm{span}\{e_{n_1+1},\ldots,e_{n_1+n_2}\}$. From the normal form in Theorem \ref{ThmZimmer}, we see that $\mathcal E_2/\mathcal E_1$ is the invariant subspace for the action of $D\hat T_i$ on the quotient $\R^N/ \mathcal E_1$. This implies that $\mathcal E_2/\mathcal E_1$ is one summand in the quotient splitting $\R^N/ \mathcal E_1=(\oplus h E_\chi)/\mathcal E_1$ and equivalently $h^{-1}\mathcal E_2/h^{-1}\mathcal E_1$ is invariant under the action of $DT$ in the quotient space $\R^N/ h^{-1}\mathcal E_1=(\oplus  E_\chi)/h^{-1}\mathcal E_1$, therefore is a summand in the quotient space. This shows that $\mu_{i,2}$ as the Lyapunov exponent of the quotient action $D\hat T_i$ on the quotient space $\mathcal E_2/\mathcal E_1$ is also the Lyapunov exponent of the quotient action of $DT_i$ on the quotient space $h^{-1}\mathcal E_2/h^{-1}\mathcal E_1$, therefore is one Lyapunov exponent of $DT_i$. 

 Inductively, we find all the Lyapunov exponents $\{\mu_{i,j}\}$. For each $j$, the vector $(\mu_{1,j},\ldots,\mu_{N,j})$ gives rise to a Lyapunov functional $\chi_j$. 


\end{proof}
\section*{Acknowledgment}
A.W. is supported by NSF grant  DMS-1316534.
J. X. is supported by the significant project 11790273 National Natural Science Foundation of China and  Beijing Natural Science Foundation (Z180003). We would like to thank Sebastian Hurtado, Kostya Khanin and Pengfei Zhang for  helpful discussions.

\end{document}